\newtheorem{thm}{Theorem}[section]
\newtheorem{prop}[thm]{Proposition}
\newtheorem{lem}[thm]{Lemma}
\newtheorem{prob}[thm]{Problem}
\newtheorem{conj}[thm]{Conjecture}
\setlist[enumerate]{itemsep=2ex, topsep=2ex} 
\setlist[itemize]{itemsep=2ex, topsep=2ex}
\newcommand{\R}{\mathbb{R}}
\newcommand{\N}{\mathbb{N}}
\newcommand{\E}{\mathbb{E}}
\newcommand{\al}{\alpha}
\newcommand{\be}{\beta}
\newcommand{\gam}{\gamma}
\newcommand{\sig}{\sigma}
\newcommand{\ep}{\epsilon}
\newcommand{\lam}{\lambda}
\newcommand{\Om}{\Omega}
\newcommand{\del}{\delta}
\newcommand{\Del}{\Delta}
\newcommand{\pa}{\partial}
\renewcommand{\l}{\left}
\renewcommand{\r}{\right}
\newcommand{\half}{\frac{1}{2}}
\newcommand{\quart}{\frac{1}{4}}
\newcommand{\sm}{\setminus}
\newcommand{\sub}{\subseteq}
\renewcommand{\c}[1]{\mathcal{#1}}
\renewcommand{\b}[1]{\mathbf{#1}}
\newcommand{\ol}[1]{\overline{#1}}
\newcommand{\tr}[1]{\textrm{#1}}
\newcommand{\rec}[1]{\frac{1}{#1}}
\newcommand{\f}[2]{\frac{#1}{#2}}
\newcommand{\ex}{\mathrm{ex}}
\newcommand{\HI}[1]{\c{H}(#1)}
\newcommand{\Con}[1]{\tilde{H}(n;#1)}
\begin{document}
	
	\title{\vspace{-0.5in} Relative Tur\'{a}n Problems for Uniform Hypergraphs}
	
	\author{Sam Spiro\thanks{Department of Mathematics, University of California, San Diego, 9500 Gilman Drive, La Jolla, CA 92093-0112, USA. E-mail: sspiro@ucsd.edu. This material is based upon work supported by the National Science Foundation Graduate Research Fellowship under Grant No. DGE-1650112.}\and
		Jacques Verstra\"ete\thanks{Department of Mathematics, University of California, San Diego, 9500 Gilman Drive, La Jolla, CA 92093-0112, USA. E-mail: jacques@ucsd.edu. Research supported by the National Science Foundation Awards DMS-1800332 and DMS-1952786,
			and by the Institute for Mathematical Research (FIM) of ETH Z\"urich.}}

	\maketitle
	
	\begin{abstract}
		\noindent	For two graphs $F$ and $H$, the relative Tur\'{a}n number $\mathrm{ex}(H,F)$ is the maximum number of edges in an $F$-free subgraph of $H$. Foucaud, Krivelevich, and Perarnau~\cite{FKP} and Perarnau and Reed~\cite{PR} studied these quantities as a function of the maximum degree of $H$.
		
		\medskip
		
		In this paper, we study a generalization for uniform hypergraphs.
		If $F$ is a complete $r$-partite $r$-uniform hypergraph with parts of sizes $s_1,s_2,\dots,s_r$ with each $s_{i + 1}$
		sufficiently large relative to $s_i$, then with $1/\beta = \sum_{i = 2}^r \prod_{j = 1}^{i - 1} s_j$ we prove
		that for any $r$-uniform hypergraph $H$ with maximum degree $\Delta$,
		\[
		\ex(H,F)\ge \Delta^{-\beta - o(1)} \cdot e(H).
		\]
		This is tight as $\Delta \rightarrow \infty$ up to the $o(1)$ term in the exponent, since we show there exists a $\Delta$-regular $r$-graph $H$ such that
		$\mathrm{ex}(H,F)=O(\Delta^{-\beta}) \cdot e(H)$. Similar tight results are obtained when $H$ is the random $n$-vertex $r$-graph $H_{n,p}^r$ with edge-probability $p$, extending
		results of Balogh and Samotij~\cite{BS} and Morris and Saxton~\cite{MS}.  General lower bounds for a wider class of $F$ are also obtained.
	\end{abstract}
	
	\section{Introduction}

	The {\em Tur\'{a}n number} $\ex(n,F)$ of a graph $F$ is the maximum number of edges in an $F$-free $n$-vertex graph. The Tur\'{a}n numbers are a central object of study in extremal graph theory, dating back to Mantel's Theorem~\cite{M} and Tur\'{a}n's Theorem~\cite{T}. Given a host graph $H$, we define the {\em relative Tur\'{a}n number} $\ex(H,F)$ to be the maximum number of edges in an $F$-free subgraph of $H$, and this is precisely $\ex(n,F)$ when $H = K_n$. The study of $\ex(H,F)$ for various graphs $F$ and $H$ has attracted considerable attention in the literature. One observes that if $F$ has chromatic number $k\ge 3$, then by taking a maximum $(k-1)$-partite subgraph we find for all $H$
	\[\ex(H,F)\ge \l(1-\rec{k-1}\r) \cdot e(H),\]
	which is best possible by the Erd\H{o}s-Stone Theorem, which shows $\ex(K_n,F) \sim (1 - \rec{k-1})e(K_n)$.
	
	\medskip
	
	The case $F$ is bipartite was studied at length by
	Foucaud, Krivelevich, and Perarnau~\cite{FKP}, who conjectured that if $F$ and $H$ are graphs such that $H$ has minimum degree $\del$ and maximum degree $\Del$, then $H$ has a spanning $F$-free subgraph of minimum degree $\Omega(\del \ex(\Del,F)/\Del^2)$ as $\Del \rightarrow \infty$, and more generally that this holds for any family of graphs $\c{F}$. This conjecture is true if $F$ has chromatic number $k \geq 3$, since a maximum $(k-1)$-partite subgraph of a graph of minimum degree $\del$ can be chosen to have minimum degree at least $(1 - \rec{k-1})\del$. The conjecture was proved up to a logarithmic factor for $\c{F}=\{C_3,C_4,\ldots,C_{2\ell}\}$ by Foucaud, Krivelevich, and Perarnau~\cite{FKP}, and later Perarnau and Reed~\cite{PR} proved the conjecture for this $\c{F}$ along with other cases such as all bipartite graphs $F$ of diameter at most three. The following
	conjecture appears to be at the heart of the above conjecture and of the same level of difficulty:
	
	\begin{conj} \label{conj:graphHost}
		If $F$ and $H$ are graphs such that $H$ has maximum degree $\Del$, then  as $\Del \rightarrow \infty$,
		\[ \ex(H,F) = \Omega\Bigl(\frac{\ex(\Del,F)}{\Del^2}\Bigr) \cdot e(H).\]
	\end{conj}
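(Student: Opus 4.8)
The plan is to combine random sparsification and deletion with a structural step that exploits the geometry of extremal $F$-free graphs; I expect the second step to be where essentially all of the difficulty lies.

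\emph{Step 1: deletion.} We may assume $F$ is connected, so that any copy of $F$ in $H$ is obtained by choosing an edge and then extending vertex by vertex, and hence $H$ has $O_F(\Delta^{v(F)-2}e(H))$ copies of $F$. Let $H_p$ be obtained from $H$ by retaining each edge independently with probability $p$, and delete one edge from every copy of $F$ in $H_p$. The expected number of surviving edges is at least $p\,e(H)-p^{e(F)}\cdot O_F(\Delta^{v(F)-2}e(H))$, and with $p=c\,\Delta^{-(v(F)-2)/(e(F)-1)}$ for a small constant $c=c(F)$ the second term is at most $\tfrac12 p\,e(H)$; hence some $F$-free subgraph of $H$ has at least $\tfrac12 p\,e(H)$ edges, giving
\[
\ex(H,F)=\Omega\!\left(\Delta^{-(v(F)-2)/(e(F)-1)}\right)\cdot e(H).
\]
The same random construction applied to $K_\Delta$ gives $\ex(\Delta,F)=\Omega\!\left(\Delta^{2-(v(F)-2)/(e(F)-1)}\right)$, so Step 1 already settles the conjecture whenever this lower bound for $\ex(\Delta,F)$ is tight in order: in particular for every tree $F$, where it yields $\ex(H,F)=\Omega(e(H)/\Delta)=\Omega(\ex(\Delta,F)/\Delta^2)\cdot e(H)$, and, together with the Erd\H{o}s--Stone bound recorded above, for all $F$ with $\chi(F)\ge 3$. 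For most bipartite $F$, however, $\ex(\Delta,F)$ has strictly larger order than the random construction produces: already for $F=C_4$ Step 1 gives only $\Omega(\Delta^{-2/3})\cdot e(H)$ against a target of $\Omega(\Delta^{-1/2})\cdot e(H)$, and for $F=K_{s,s}$ it falls short by the fixed power $\Delta^{(s-1)/(s(s+1))}$. A second idea is therefore needed.

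\emph{Step 2: local embedding and the main obstacle.} The heuristic is that inside any $\Delta$-neighbourhood $H$ looks like a graph on at most $\Delta$ vertices, which admits an $F$-free subgraph of density $\ex(\Delta,F)/\binom{\Delta}{2}$. The natural attempt is to fix an extremal $F$-free graph $G^\ast$ on the vertex set $\{1,\dots,\Delta\}$, choose $\phi\colon V(H)\to\{1,\dots,\Delta\}$ uniformly at random, and keep exactly the edges $uv$ of $H$ with $\phi(u)\phi(v)\in E(G^\ast)$; this retains $\ex(\Delta,F)/\binom{\Delta}{2}\cdot e(H)$ edges in expectation, which is precisely the right count. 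The difficulty is that the retained subgraph need not be $F$-free: a copy of $F$ in $H$ on which $\phi$ is not injective can still map into $G^\ast$, and a short first-moment computation — say for $F=K_{s,s}$, where $H$ may contain on the order of $e(H)\Delta^{2s-2}$ copies — shows that the expected number of such collapsed copies far exceeds the expected number of retained edges, so a follow-up deletion would remove essentially everything. Making the approach work seems to require first passing to a subgraph of $H$ that is locally spread out relative to the geometry of $G^\ast$ — of large girth, say, or carrying few copies of $F$ per edge — and only then performing the random embedding together with a bounded final deletion. I expect the real obstacle to be controlling and pruning the copies of $F$ supported on the locally dense portions of $H$, since $H$ can be dense inside $\Delta$-balls in a way incompatible with the structure of the extremal $G^\ast$; the most promising route appears to be to adapt the hypergraph container method to an arbitrary host $H$ — in the spirit of the results of Balogh--Samotij and Morris--Saxton cited above for random hosts — and the absence of such an adaptation is presumably why Conjecture~\ref{conj:graphHost} remains open.
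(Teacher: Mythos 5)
You have not proved the statement, and you should be aware that the paper does not either: Conjecture~\ref{conj:graphHost} is stated as an open problem (and the paper even shows its natural $r$-graph analogue fails for $r\ge 3$), so there is no proof here to match yours against. Judged on its own terms, your Step 1 is correct but only yields $\ex(H,F)=\Omega\bigl(\Delta^{-(v(F)-2)/(e(F)-1)}\bigr)\cdot e(H)$, which, as you note, settles only the cases where the first-moment deletion bound for $\ex(\Delta,F)$ is of the right order (trees, and $\chi(F)\ge 3$ via Erd\H{o}s--Stone); these are exactly the cases already known, so nothing new is established. Step 2 is where the conjecture actually lives, and you candidly concede it fails as written: the random map $\phi:V(H)\to V(G^\ast)$ collapses copies of $F$, and the expected number of collapsed copies swamps the expected number of retained edges. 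Since you do not supply the missing pruning/container step, the proposal contains a genuine, acknowledged gap and does not constitute a proof.

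It is worth noting that your Step 2 is precisely the starting point of the paper's partial progress, and the paper's fix is instructive about why the conjecture remains open. In Lemma~\ref{lem:homGen} the random map is combined with two extra safeguards: edges $e$ are discarded when some $f$ with $|e\cap f|=r-1$ satisfies $\chi(f)=\chi(e)$, and the target graph $J$ is chosen to be free not just of $F$ but of the whole family $\HI{F}$ of surjective local-isomorphic images of $F$ (for graphs, e.g.\ $C_8$ collapsing onto two $C_4$'s sharing an edge). This removes exactly the collapsed copies you worry about, but the price is that the bound becomes $\Omega\bigl(\ex(\Delta,\HI{F})/\Delta^{2}\bigr)\cdot e(H)$ rather than $\Omega\bigl(\ex(\Delta,F)/\Delta^{2}\bigr)\cdot e(H)$; one recovers the conjectured bound only when $\ex(n,\HI{F})$ has the same order as $\ex(n,F)$ (e.g.\ $K_{s,t}$, families $\{C_3,\dots,C_{2\ell}\}$, or the diameter-at-most-three graphs of Perarnau and Reed). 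Closing the gap between $\ex(\Delta,\HI{F})$ and $\ex(\Delta,F)$ for general bipartite $F$ with cycles and large diameter is the true obstruction, and neither your proposal nor the paper overcomes it.
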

	
	It is generally an open problem to find such an $F$-free subgraph of $H$ when $F$ is bipartite, contains a cycle, and has diameter larger than three.  We note that one reason may suspect that Conjecture~\ref{conj:graphHost} holds is that the clique $K_{\Del+1}$ is the densest graph of maximum degree $\Del$.  This means that it should be relatively hard to delete copies of $F$ from $K_{\Del+1}$, which suggests that $\ex(K_{\Del+1},F)=\ex(\Del+1,F)$ should be relatively small compared to any other graph $H$ with maximum degree $\Del$.
	
	In this paper we generalize the relative Tur\'an number to $r$-uniform hypergraphs, which we call $r$-graphs for short.  For two $r$-graphs $H,F$ we define the {\em relative Tur\'{a}n number} $\ex(H,F)$  to be the maximum number of edges in an $F$-free subgraph of $H$. It follows from results of Katona, Nemetz, and Simonovits~\cite{KNS} that if $F$ is not $r$-partite, then for any $r$-graph $H$ we have
	$\ex(H,F) \geq (c(F) - o(1)) \cdot e(H)$, where $c(F) = \lim_{n \rightarrow \infty} \ex(n,F)/{n \choose r}$ is the {\em Tur\'{a}n density} of $F$.
	In particular, equality holds when $H = K_n^r$, the complete $r$-graph on $n$ vertices. The problem of determining $c(F)$ when $F$ is not $r$-partite is a famous open problem in extremal hypergraph theory,
	and the notorious conjecture $c(K_4^3)= 5/9$ is known as Tur\'{a}n's conjecture -- see Keevash~\cite{Keevashsurvey} for a survey of hypergraph Tur\'{a}n problems.

	\subsection{Complete $r$-partite $r$-graphs}
	
	For positive integers $2\le s_1\le \cdots \le s_r$, define $ K_{s_1,\ldots,s_r}$ to be the {\em complete $r$-partite $r$-graph}, which has vertex set $U_1\cup \cdots \cup U_r$ with $|U_i|=s_i$ for all $i$, and which has all edges of the form $\{u_1,\ldots,u_r\}$ with $u_i\in U_i$ for all $i$.
	We prove the following almost tight theorem on relative Tur\'{a}n numbers for complete $r$-partite $r$-graphs:

	\begin{thm}\label{thm:K2}
		For $r\ge 2$, let $2\le s_1\le \cdots \le s_r$ be integers and $a_i = \prod_{j < i} s_j$ for $1 \leq i \leq r$.
		\begin{enumerate}
			\item[$1$.] For any (sufficiently large) $\Del$, there exists an $r$-graph $H$ which is $\Del$-regular such that as $\Del \rightarrow \infty$,
			\begin{equation*}
			\ex(H,K_{s_1,\ldots,s_r})=O\l(\Del^{\f{-1}{\sum_{i=2}^r a_i}}\r) \cdot e(H).
			\end{equation*}
			\item[$2$.] There exist functions $f_i : \mathbb Z^+ \rightarrow \mathbb Z^+$ for $1< i \le r$ such that if $s_{i } \geq f_i(s_{i-1})$ for $1< i \le r$, then for any $r$-graph $H$ with maximum degree $\Del$, as $\Del \rightarrow \infty$,
			\begin{equation*}
			\ex(H,K_{s_1,\ldots,s_r})\ge \Del^{\f{-1}{\sum_{i=2}^r a_i}-o(1)} \cdot e(H).
			\end{equation*}
		\end{enumerate}
	\end{thm}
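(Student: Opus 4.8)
The plan is to prove part~2 by induction on $r$, using the link structure of $H$ to reduce copies of $K_{s_1,\dots,s_r}$ to copies of the complete $(r-1)$-partite $(r-1)$-graph $K_{s_1,\dots,s_{r-1}}$. For an $r$-graph $G$ and a vertex $w$, let $\mathrm{link}_G(w)=\{e\setminus\{w\}:w\in e\in G\}$, an $(r-1)$-graph. The edges of a copy of $K_{s_1,\dots,s_r}$ passing through its last part $W$ (so $|W|=s_r$) form a copy of $K_{s_1,\dots,s_{r-1}}$ inside the \emph{common link} $\bigcap_{w\in W}\mathrm{link}_G(w)$; hence it suffices to produce $G\sub H$ for which $\bigcap_{w\in W}\mathrm{link}_G(w)$ is $K_{s_1,\dots,s_{r-1}}$-free for every $s_r$-set $W$. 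Writing $1/\be_r=\sum_{i=2}^r a_i$ for the exponent attached to $K_{s_1,\dots,s_r}$, the exponent attached to $K_{s_1,\dots,s_{r-1}}$ is $1/\be_{r-1}=\sum_{i=2}^{r-1}a_i=1/\be_r-a_r$, so the two fit together additively. The base case $r=2$ is the statement $\ex(H,K_{s_1,s_2})\ge\Del^{-1/s_1-o(1)}e(H)$ for every graph $H$ of maximum degree $\Del$.

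\emph{Inductive step.} Fix a small $\eta>0$ and let $G_1\sub H$ retain each edge independently with probability $p=\Del^{-\be_r-\eta}$, so $\E\,e(G_1)=p\,e(H)$. For each $s_r$-set $W$, the common link $L_W:=\bigcap_{w\in W}\mathrm{link}_{G_1}(w)$ is the $p^{s_r}$-random subgraph of $L_W':=\bigcap_{w\in W}\mathrm{link}_H(w)$, an $(r-1)$-graph whose maximum degree is a codegree of $H$, hence at most $\Del$. The growth hypothesis enters here: since $1/\be_r=a_r+1/\be_{r-1}$ and $a_r\big/\sum_{i=2}^{r-1}a_i\le a_r/a_{r-1}=s_{r-1}$, we get $\be_{r-1}/\be_r=1+a_r\big/\sum_{i=2}^{r-1}a_i\le 1+s_{r-1}$, so choosing $f_r$ to grow fast enough (at the least $f_r(x)\ge x+1$) forces $s_r\be_r>\be_{r-1}$ with a fixed margin, i.e.\ $p^{s_r}\le\Del^{-\be_{r-1}-\eta'}$ for some fixed $\eta'>0$. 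Thus each $L_W$ is a random subgraph, at density below the ``clearing'' threshold, of an $(r-1)$-graph of maximum degree $\le\Del$, and the $(r-1)$-uniform form of the result lets one destroy all copies of $K_{s_1,\dots,s_{r-1}}$ in $L_W$ by deleting an $o(1)$-fraction of its edges. Carrying this out for every $W$ and pushing the deletions back into $E(G_1)$ should remove $o(p\,e(H))$ edges in expectation and leave a $K_{s_1,\dots,s_r}$-free subgraph with $(1-o(1))p\,e(H)$ edges; letting $\eta\to0$ as $\Del\to\infty$ then yields $\ex(H,K_{s_1,\dots,s_r})\ge\Del^{-\be_r-o(1)}e(H)$. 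The base case $r=2$ has the same shape but carries the real weight: after sparsifying $H$ to density $\Del^{-1/s_1-\eta}$ there are still too many copies of $K_{s_1,s_2}$ to delete one edge per copy, and they must be cleared with a balanced-supersaturation / hypergraph-container argument of the kind used by Balogh--Samotij and Morris--Saxton --- which is exactly where $s_2\ge f_2(s_1)$ is needed.

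\emph{Main obstacle, and part~1.} The chief difficulty is that the links $L_W$ over distinct $s_r$-sets $W$ overlap heavily in $E(G_1)$, so they cannot be treated independently: a single random $G_1$ and a single set of deleted edges must make all $L_W$ simultaneously $K_{s_1,\dots,s_{r-1}}$-free. This seems to require the inductive statement in a strengthened, \emph{local and constructive} form --- the decision to delete an edge of $G_1$ should depend only on a bounded neighbourhood of that edge, so the deletions are automatically consistent across overlapping links --- together with balanced-supersaturation estimates phrased directly for the edges of $G_1$, proved by counting partial transversals of $K_{s_1,\dots,s_r}$ with each partial transversal weighted by a codegree of $H$ (a factor $\le\Del$). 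Showing that these local deletions cost only $o(p\,e(H))$ in total, uniformly over $W$, and then propagating the strengthened statement cleanly through the induction on $r$, with the growth hypothesis supplying the needed slack at each level, is the main technical hurdle. Part~1, by contrast, is the easy direction: one exhibits an explicit $\Del$-regular host (for $r=2$, $H=K_{\Del+1}$ already works, and in general a suitably layered/recursive construction) whose relative Tur\'an number is bounded above by inserting the known upper bounds for $\ex(n,K_{s_1,\dots,s_r})$ into the corresponding estimate.
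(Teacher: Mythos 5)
Your plan for part 2 has a genuine gap at its central step. After sparsifying $H$ at density $p=\Del^{-\be_r-\eta}$, you assert that each common link $L_W$, being a $p^{s_r}$-random subgraph of an $(r-1)$-graph of maximum degree at most $\Del$ at density below $\Del^{-1/\sum_{i=2}^{r-1}a_i}$, can be made $K_{s_1,\ldots,s_{r-1}}$-free by deleting an $o(1)$-fraction of its edges. That is not what the inductive hypothesis gives: the relative Tur\'an bound only produces \emph{some} $K_{s_1,\ldots,s_{r-1}}$-free subgraph containing a $\Del^{-1/\sum_{i=2}^{r-1}a_i-o(1)}$ fraction of the edges, not a statement that a random sparsification of a bounded-degree host is almost free of copies. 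What you need is a sparse random-host Tur\'an theorem in the spirit of Theorem~\ref{thm:randK}, but with an arbitrary host of maximum degree $\Del$ in place of $K_n^{r-1}$; proving it would require balanced supersaturation inside an arbitrary bounded-degree host, which is not available (Morris--Saxton and Corsten--Tran prove it only for $K_n^r$), and the same missing ingredient sits inside your base case $r=2$, where you again invoke containers in a general host. On top of this you acknowledge, but do not resolve, the need for one deletion set that cleans all the heavily overlapping links $L_W$ simultaneously. You also misplace the role of the hypothesis $s_i\ge f_i(s_{i-1})$: in the actual proof it is consumed solely to guarantee the constructions $\ex(n,K_{s_1,\ldots,s_i})=\Om(n^{i-1/a_i})$ for every $i\le r$ (random algebraic constructions of Ma--Yuan--Zhang/Pohoata--Zakharov), not to push $p^{s_r}$ below a threshold or to run a container argument.

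The paper's route avoids sparsification entirely via a codegree dichotomy with $D=\Del^{a_r/\sum_{i=2}^r a_i}$. If at least half the edges of an $r$-partite subgraph contain no $(r-1)$-set of degree at least $D$, one takes a uniformly random map $\chi:V(H)\to V(J)$ into an extremal $\HI{K_{s_1,\ldots,s_r}}$-free $r$-graph $J$ on $\Theta(D)$ vertices and keeps $e$ only if $\chi(e)\in E(J)$ and no $f$ with $|e\cap f|=r-1$ has $\chi(f)=\chi(e)$; this yields an $F$-free subgraph with $\Om(D^{-1/a_r})e(H)$ edges (Theorem~\ref{thm:homGen}), and is exactly where the Tur\'an lower bounds enter. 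Otherwise most edges lie over an $(r-1)$-set of codegree at least $D$; then one buckets $(r-1)$-sets dyadically by codegree, projects to an $(r-1)$-graph of maximum degree $O(\Del/D)$, applies the $(r-1)$-uniform statement there (Theorem~\ref{thm:codeg}), and lifts back, so the induction runs deterministically through a projection rather than through links of a random subgraph. Finally, for part 1, ``inserting the known upper bounds for $\ex(n,K_{s_1,\ldots,s_r})$'' is not sufficient: the extremal host is the layered unbalanced construction with parts of sizes $n^{a_2},n^{a_2},n^{a_3},\ldots,n^{a_r}$, and bounding its relative Tur\'an number requires the unbalanced supersaturation lemma proved by convexity and induction (Lemma~\ref{lem:construct}) together with a pigeonhole descent through the layers; applying the balanced bound $O(N^{r-1/a_r})$ to the whole vertex set is far too weak for $r\ge 3$, and the clique only achieves the smaller exponent $1/((r-1)a_r)$, which is precisely why the construction must be unbalanced.
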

	We note that the second part of Theorem~\ref{thm:K2} will more or less follow from the more general results Theorems~\ref{thm:homGen} and \ref{thm:codeg} stated below.  We also note that this theorem extends results of Perarnau and Reed~\cite{PR} for relative Tur\'{a}n numbers of complete bipartite graphs $K_{s_1,s_2}$.  In this setting, they proved the second half of Theorem~\ref{thm:K2} without a $o(1)$ term, and it would be of interest to determine whether this error term can be removed for $r\ge 3$ as well. 
	
	According to Theorem \ref{thm:K2}, if $s_1,s_2,\dots,s_r$ grow sufficiently fast, then
	\begin{equation}\label{genTuranexp}
	\beta(K_{s_1,\ldots,s_r}) := \lim_{\Del \rightarrow \infty}\sup_H \frac{\log e(H)/\ex(H,K_{s_1,\ldots,s_r})}{\log \Delta} = \frac{1}{\sum_{i = 2}^r s_1s_2\cdots s_{i-1}},
	\end{equation}
	where the supremum ranges over all $H$ with maximum degree at most $\Del$.  The functions $f_i$ for $1 \leq i < r$ in Theorem \ref{thm:K2} are based on the current state of knowledge of the hypergraph Tur\'{a}n numbers\footnote{By using recent results of Pohoata and Zakharov~\cite{PZ}, it can be shown that we can take the functions to be $f_i(t)=((r-1)t^{i-1}-1)!+1$.}
	$\ex(n,K_{s_1,\ldots,s_r})$, and this condition is unnecessary if the following conjecture attributed to Erd\H{o}s~\cite{E} is true: if
	$s_1 \leq s_2 \leq \dots \leq s_r$ then
	\begin{equation}\label{completepartite}
	\ex(n,K_{s_1,\ldots,s_r})= \Theta\bigl(n^{r- \frac{1}{s_1 s_2 \cdots s_{r-1}}}\bigr).
	\end{equation}
	This conjecture was first stated explicity by Mubayi~\cite{Mubayi}, and at $r=2$ constitutes the notorious Zarankiewicz problem~\cite{Z}. When $r = 2$ and $s_2 > (s_1 - 1)!$, the conjecture was solved by Alon, Koll\'{a}r, R\'{o}nyai and Szab\'{o}~\cite{ARS,KRS}. By adapting the random polynomial constructions introduced by Bukh and Conlon~\cite{BC}, this conjecture was proved by Ma, Yuan, and Zhang~\cite{MYZ} when $s_r$ is large enough relative to $s_{r-1}$. Thus in this setting where (\ref{completepartite}) holds,
	\begin{equation}\label{Turanexp}
	\alpha(K_{s_1,\ldots,s_r}) := \lim_{n \rightarrow \infty} \frac{\log {n \choose r}/\ex(n,K_{s_1,\ldots,s_r})}{\log {n-1\choose r-1}} =  \frac{1}{(r-1)s_1s_2\cdots s_{r-1}}.
	\end{equation}
	This is strictly less than $\beta(K_{s_1,\ldots,s_r})$ when $r \geq 3$ and $\al(K_{s_1,\ldots,s_r})=\be(K_{s_1,\ldots,s_r})$ when $r=2$, as was originally proven by Perarnau and Reed~\cite{PR}.  
	
	This implies that the natural generalization of Conjecture~\ref{conj:graphHost} for $r$ graphs is false for $r\ge 3$, i.e. there exist $r$-graphs $F$ such that the clique with maximum degree $\Del$ is not an asymptotic minimizer of $\ex(H,F)$ as $H$ ranges over $r$-graphs with maximum degree $\Del$.  We note that the $H$ we use to prove the second half of Theorem~\ref{thm:K2} is a certain unbalanced complete $r$-partite $r$-graph, and a similar construction was used by Foucaud, Krivelevich and Perarnau~\cite{FKP} for a related problem where $H$ had a fixed number of edges.  
	
	\subsection{Random Hypergraphs}
	
	We recall that $H_{n,p}^r$ is the $r$-graph on $n$ vertices where each edge of $K_n^r$ is added to $H_{n,p}^r$ independently and with probability $p$ -- so $H_{n,p}^2 = G_{n,p}$.  If $(A_n)_{n \geq 1}$ is a sequence of events in a probability space, then we say $A_n$ holds \textit{asymptotically almost surely} (abbreviated a.a.s.) if $\lim_{n \rightarrow \infty} \mathbb P(A_n) = 1$.  A central conjecture of Kohayakawa, \L uczak and R\"{o}dl~\cite{KLR} was resolved independently by Conlon and Gowers~\cite{CG} and by Schacht~\cite{Sch}, which determines $\ex(G_{n,p},F) = (1 - \rec{k-1} + o(1))p{n \choose 2}$ a.a.s.\ whenever $F$ has chromatic number $k \geq 3$ and $p=\Om(n^{-1/m_2(F)})$, where $m_2(F)$ is the so-called 2-density of $F$. The case $F=C_4$ was essentially resolved by F\"uredi~\cite{F}.  This work was generalized to even cycles by Kohayakawa, Kreuter and Steger~\cite{KKS} and to complete bipartite graphs by Balogh and Samotij~\cite{BS}, and both of these cases were further refined by Morris and Saxton~\cite{MS}. Using the method of containers together with some probabilistic techniques, we prove the following theorem, which generalizes results of Balogh and Samotij~\cite{BS} and Morris and Saxton~\cite{MS} for the case $r = 2$:

	\begin{thm}\label{thm:randK}
		For $r\ge 2$, let $2\le s_1\le \cdots \le s_r$ be integers, $a_i = \prod_{j = 1}^{i - 1} s_j$ for $i=r,r+1$, and
		\[\be_1=\f{\sum_{i=1}^r s_i-r}{a_{r+1}-1} \quad \quad \mbox{ and } \quad \quad \ \be_2=\f{a_r(\sum_{i=1}^{r-1}s_i-r)+1}{(a_r-1)(a_{r+1}-1)}.\]
		If $\ex(n,K_{s_1,\ldots,s_r})= \Om(n^{r-1/a_r})$, then a.a.s.
		\[\ex(H_{n,p}^r,K_{s_1,\ldots,s_r})= \begin{cases}
		\Theta(pn^r) & n^{-r/2}\log n \leq p \leq n^{-\beta_1}, \\
		n^{r-\be_1+o(1)} & n^{-\be_1}\le p\le n^{-\be_2}(\log n)^{2a_r/(a_r-1)},\\  \Theta(p^{1-1/a_r}n^{r-1/a_r})& n^{-\be_2}(\log n)^{2a_r/(a_r-1)}\le p \le 1. \end{cases}\]
		More precisely, for $n^{-\be_1}\le p\le n^{-\be_2}(\log n)^{2a_r/(a_r-1)}$, we have a.a.s.
		\[ \Om(n^{r-\be_1})=\ex(H_{n,p}^{r},K_{s_1,\ldots,s_r})=O(n^{r-\be_1}(\log n)^2).\]
	\end{thm}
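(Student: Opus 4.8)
Write $F=K_{s_1,\dots,s_r}$ and let $v=\sum_{i=1}^r s_i$ and $e=a_{r+1}=\prod_{i=1}^r s_i$ be the numbers of vertices and edges of $F$ (note that $F$ is balanced). The problem is governed by two quantities: $e(H^r_{n,p})=\Theta(pn^r)$, which is a.a.s.\ concentrated --- the hypothesis $p\ge n^{-r/2}\log n$ being used to force $\E\,e(H^r_{n,p})\to\infty$ --- and the number of copies of $F$ in $H^r_{n,p}$, of expectation $\Theta(n^vp^e)$; these have the same order exactly when $p=n^{-\be_1}$, since $n^vp^e=\Theta(pn^r)\iff p^{e-1}=n^{r-v}$. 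In each of the three ranges the plan is to get the upper bound from a first--moment/container estimate and the lower bound from deletion, a monotone coupling, or an explicit construction.

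For $p\le n^{-\be_1}$ the upper bound is immediate from $\ex(H^r_{n,p},F)\le e(H^r_{n,p})$. For the lower bound one uses the deletion method: since $F$ is balanced, the number of copies of $F$ in $H^r_{n,p}$ is a.a.s.\ at most $(1-\varepsilon)e(H^r_{n,p})$ for all $p\le n^{-\be_1}$ --- it is concentrated around $\Theta(n^vp^e)$ above the appearance threshold of $F$, with ratio to $e(H^r_{n,p})$ bounded away from $1$ because $|\mathrm{Aut}(F)|>r!$, and a.a.s.\ negligible below it --- so deleting one edge per copy leaves an $F$-free subgraph with $\Om(pn^r)$ edges. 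Applying this at $p=n^{-\be_1}$ together with the coupling $H^r_{n,p}\supseteq H^r_{n,n^{-\be_1}}$ gives $\ex(H^r_{n,p},F)\ge\ex(H^r_{n,n^{-\be_1}},F)=\Om(n^{r-\be_1})$ throughout the middle range, the lower bound required there.

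The upper bounds in the middle and dense ranges carry the real content; both proceed via the container method applied to the $e$-uniform ``copy hypergraph'' on vertex set $E(K^r_n)$. Fed only by ordinary supersaturation (any $n$-vertex $r$-graph with at least $Cn^{r-1/a_r}$ edges contains polynomially many copies of $F$, by a K\H{o}v\'{a}ri--S\'{o}s--Tur\'{a}n-type convexity argument, using $\ex(n,F)=O(n^{r-1/a_r})$), the standard container theorem produces a family $\mathcal C$ with $|\mathcal C|\le\exp(O(n^{r-1/a_r}\log n))$, each $C\in\mathcal C$ with $e(C)\le Cn^{r-1/a_r}$, and every $F$-free $r$-graph on $[n]$ contained in some $C$; but this container count is too large to be absorbed by the first--moment factor --- namely $\binom{e(C)}{m}p^m$ for an $F$-free subgraph with $m$ edges, handled by Markov for the small $p$ of the middle range (where the relevant size is $m_0=\Theta(n^{r-\be_1}(\log n)^2)$), and by a Chernoff bound for the larger $p$ of the dense range, where the target size, a large constant times $p^{1-1/a_r}n^{r-1/a_r}$, exceeds $2p\cdot e(C)$ since $p^{-1/a_r}\ge1$. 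The fix, and the main obstacle of the theorem, is to establish a \emph{balanced} supersaturation theorem for $F=K_{s_1,\dots,s_r}$ --- every $r$-graph with at least $Cn^{r-1/a_r}$ edges carries a family of copies of $F$ in which no set of $j\le e$ edges lies in too many members --- and feed it into the container machinery in the fingerprint form of Balogh--Samotij~\cite{BS} and Morris--Saxton~\cite{MS} (union-bounding directly over subsets of $E(H^r_{n,p})$), which saves the stray $\log n$ and closes both estimates. This is the hypergraph analogue of the corresponding step of Morris--Saxton~\cite{MS}, now requiring one to control the codegrees of partial complete $r$-partite $r$-graphs; it is where I expect essentially all the difficulty to lie. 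The parameters $m_0=\Theta(n^{r-\be_1}(\log n)^2)$ and the endpoint $p=n^{-\be_2}(\log n)^{2a_r/(a_r-1)}$ then fall out of matching these estimates: one uses the elementary inequality $\be_1>1/a_r$, and $2a_r/(a_r-1)$ is chosen so that its $(1-1/a_r)$-th power is $2$, which makes the dense-range bound at $p=n^{-\be_2}$ equal to $n^{r-\be_1}$ and hence consistent with the middle range.

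It remains to prove the lower bound $\Om(p^{1-1/a_r}n^{r-1/a_r})$ in the dense range, for which the hypothesis $\ex(n,F)=\Om(n^{r-1/a_r})$ is used. The plan is to exhibit an $F$-free subgraph of $H^r_{n,p}$ by a random construction --- for instance by retaining about $(pn)^{1-1/a_r}$ of the $\approx pn$ vertices in the link of each $(r-1)$-set of $H^r_{n,p}$, or by randomly sparsifying a suitably pseudorandom extremal $F$-free $r$-graph --- and then deleting one edge from each of the few surviving copies of $F$; the polylogarithmic lower endpoint of this range is exactly what makes the count of surviving copies negligible. Apart from the balanced supersaturation input, every step adapts a known argument from the graph case.
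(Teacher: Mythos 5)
Your overall architecture (deletion plus concentration in the sparse range, monotone coupling in the middle range, containers fed by balanced supersaturation for the dense upper bound) matches the paper's, but the clear failure is the lower bound $\Om(p^{1-1/a_r}n^{r-1/a_r})$ in the dense range. The paper gets it by first showing that a.a.s.\ every $(r-1)$-set of $H_{n,p}^r$ has degree $O(pn)$ and $e(H_{n,p}^r)=\Om(pn^r)$, and then invoking Theorem~\ref{thm:homGen}: a random homomorphism of $H_{n,p}^r$ onto an extremal $\HI{K_{s_1,\ldots,s_r}}$-free $r$-graph on $\Theta(pn)$ vertices (with the local-isomorphism condition excluding degenerate images) produces a $K_{s_1,\ldots,s_r}$-free subgraph with $\Om((pn)^{-1/a_r})\cdot e(H_{n,p}^r)$ edges; this is where the hypothesis $\ex(n,K_{s_1,\ldots,s_r})=\Om(n^{r-1/a_r})$ enters, used at the codegree scale $pn$ rather than at scale $n$. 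Neither of your sketches reaches this bound: intersecting (``randomly sparsifying'') an extremal $K_{s_1,\ldots,s_r}$-free $r$-graph with $H_{n,p}^r$ leaves only $\Theta(p\,n^{r-1/a_r})$ edges, a factor $p^{-1/a_r}$ short of the target; and truncating the link of each $(r-1)$-set to $(pn)^{1-1/a_r}$ vertices does not make the subgraph $K_{s_1,\ldots,s_r}$-free, while the proposed clean-up by deletion fails because in such a truncation the number of copies can exceed the number of edges (e.g.\ for $r=2$ and $p$ constant, keeping $n^{1-1/s_1}$ random neighbours of each vertex of $K_n$ leaves roughly $n^{s_1}$ copies of $K_{s_1,s_2}$ but only about $n^{2-1/s_1}$ edges). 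The missing idea is precisely the embedding of an extremal configuration at the codegree scale.

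For the dense-range upper bound you correctly isolate balanced supersaturation for $K_{s_1,\ldots,s_r}$ as the key input and correctly predict the mechanism (fingerprint-style containers as in Morris--Saxton, the reason plain supersaturation loses a logarithm, and the origin of $\be_2$ and the exponent $2a_r/(a_r-1)$ from matching the two ranges), but you leave that input as ``where the difficulty lies'' with neither a proof nor a reference. The paper does not reprove it either; it imports it from Corsten and Tran, and its substantive work is in converting their codegree bounds into the container hypothesis (computing the branching parameter, showing that only the extreme terms of the codegree maximum matter, which is exactly where $\gamma$ and hence $\be_2$ arise) and then iterating the container lemma and counting fingerprints --- so your upper-bound plan is the right one, but it is a plan rather than a proof. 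Two smaller corrections: the hypothesis $p\ge n^{-r/2}\log n$ is not there to force $\E[e(H_{n,p}^r)]\to\infty$; it is what makes the Azuma-type concentration of $\ex(H_{n,p}^r,K_{s_1,\ldots,s_r})$ effective, since the deviation scale over $\binom{n}{r}$ edge-indicators is $\Theta(n^{r/2}\sqrt{\log n})$. And your endpoint claim that the copy count is a.a.s.\ at most $(1-\ep)e(H_{n,p}^r)$ at $p=n^{-\be_1}$ ``because $|\mathrm{Aut}(F)|>r!$'' is unjustified as stated; the paper avoids this constant-factor issue by running the deletion argument only for $p\le cn^{-\be_1}$ with $c$ small and extending the lower bound upward by monotonicity.
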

	We note that for $p<n^{-r/2}\log n$ it is easy to show that $\E[\ex(H_{n,p}^r,K_{s_1,\ldots,s_r})]=\Theta(pn^r)$, but a slightly different argument than the one we present is needed to show that the result holds a.a.s.
	
	\subsection{General Results}
	Given a family of $r$-graphs $\c{F}$, we define $\ex(n,\c{F})$ and $\ex(H,\c{F})$ to be the maximum number of edges in an $\c{F}$-free subgraph of $K_n^r$ and $H$, respectively.  The proof techniques used for Theorem~\ref{thm:K2} generalize to other families of $r$-graphs, and at its core it relies on two general results which prove effective lower bounds on $\ex(H,\c{F})$ depending on if $H$ has small or large codegrees.
	
	
	We say that an $r$-partite $r$-graph $F$ on $U_1\cup \cdots\cup U_r$ is \textit{tightly connected} if for all $i$ and distinct $u_1,u_2\in U_i$ there exist edges $e_1,e_2\in E(F)$ with $u_i\in e_i$ and $|e_1\cap e_2|=r-1$.  For example, $K_{s_1,\ldots,s_r}$ is tightly connected.
	We prove the following theorem for $r$-graphs $H$ with low codegrees.  Here the $(r-1)$-degree of a set $S\sub V(H)$ is defined to be the number of edges in $H$ containing $S$.
	
	\begin{thm}\label{thm:homGen}
		For $r\ge 2$, let $\c{F}$ be a family of tightly connected $r$-graphs with $\ex(n,\c{F})=\Om(n^{r-\gam})$ for some $\gam>0$.  If $H$ is an $r$-graph with maximum $(r-1)$-degree at most $D\ge 1$, then \[\ex(H,\c{F})=\Om(D^{-\gam}) \cdot e(H).\]
	\end{thm}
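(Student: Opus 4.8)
The plan is to pull back a nearly-extremal $\c{F}$-free $r$-graph along a random vertex map, then delete a negligible number of edges. Fix a large constant $C=C(\c{F})$ and put $n=CD$. Since $\ex(n,\c{F})=\Om(n^{r-\gam})$, there is an $\c{F}$-free $r$-graph on $n$ vertices with $\Om(n^{r-\gam})$ edges; colouring its vertices with $r$ colours uniformly at random and keeping only the rainbow edges yields an $r$-partite $\c{F}$-free $r$-graph $G$ with parts $V_1,\dots,V_r$ and still $\Om(n^{r-\gam})$ edges, and after discarding edges we may assume $e(G)=\Theta(n^{r-\gam})$. Let $\phi\colon V(H)\to V(G)$ be uniformly random with the images of distinct vertices independent, and let $H'\sub H$ consist of the edges $e$ with $\phi(e)\in E(G)$ (so $\phi$ is injective on each edge of $H'$). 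For fixed $e\in E(H)$ the $r$ images of its vertices are i.i.d.\ uniform, so $\P[e\in E(H')]=r!\,e(G)/|V(G)|^{r}=\Theta(n^{-\gam})=\Theta(D^{-\gam})$, and hence $\E\,e(H')=\Theta(D^{-\gam})\cdot e(H)$.

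Next I would show $H'$ is \emph{almost} $\c{F}$-free. Suppose $F_0\sub H'$ is a copy of some $F\in\c{F}$ on part classes $W_1,\dots,W_r$; then $\phi$ restricts to a homomorphism $\psi\colon F\to G$, which is not injective because $G$ is $\c{F}$-free. Tight-connectedness enters here: for any $x,x'\in W_i$ it provides an $(r-1)$-set $S$ with $S\cup\{x\},S\cup\{x'\}\in E(F_0)$, and then $\psi(S\cup\{x\})$ and $\psi(S\cup\{x'\})$ are edges of $G$ both containing the $(r-1)$-set $\psi(S)$, so $\psi(x)$ and $\psi(x')$ both lie in the unique part of $G$ not met by $\psi(S)$; thus $\psi(W_i)$ is contained in one part of $G$. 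Consequently $\psi$ respects the $r$-partitions, so any collision $\psi(x)=\psi(y)$ with $x\neq y$ forces $x,y$ into a common part $W_i$, and tight-connectedness again supplies an $(r-1)$-set $S$ with $S\cup\{x\},S\cup\{y\}\in E(F_0)\sub E(H')$ — two distinct edges of $H'$ sharing the $(r-1)$-set $S$ and with $\phi(x)=\phi(y)$. Call $f=S\cup\{v\}\in E(H')$ \emph{bad} if $\phi(v)=\phi(v')$ for some $v'\neq v$ with $S\cup\{v'\}\in E(H')$ (over all $(r-1)$-subsets $S$ of $f$), and let $H''$ be $H'$ with every bad edge deleted. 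By the above each copy of $F$ in $H'$ contains a bad edge, so $H''$ is $\c{F}$-free.

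It remains to bound the number of bad edges. For an $(r-1)$-set $S$ lying in some edge of $H$, write $N_H(S)=\{v:S\cup\{v\}\in E(H)\}$, so $|N_H(S)|\le D$. By independence (the sets $S$, $\{v\}$, $\{v'\}$ being disjoint), distinct $v,v'\in N_H(S)$ form a bad pair at $S$ with probability $r!\,e(G)/|V(G)|^{r+1}=\Theta(n^{-1-\gam})$, so the expected number of edges made bad at $S$ is $O\!\left(|N_H(S)|^{2}n^{-1-\gam}\right)$. Summing over $S$ and using $\sum_S|N_H(S)|=r\,e(H)$ with $|N_H(S)|\le D$ gives $\E[\#\{\text{bad edges}\}]=O\!\left(D\,n^{-1-\gam}e(H)\right)=O\!\left(C^{-1}D^{-\gam}e(H)\right)$, which for $C$ large is below $\half\,\E\,e(H')$. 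Hence $\E\,e(H'')=\Om(D^{-\gam})\cdot e(H)$, and any outcome of $\phi$ for which $e(H'')$ meets its expectation gives the desired $\c{F}$-free subgraph.

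The conceptual heart is choosing an \emph{$r$-partite} host $G$: a random homomorphism into an arbitrary $\c{F}$-free $r$-graph need not yield an $\c{F}$-free $H'$, since a copy of $F$ in $H'$ could arise by folding together vertices of $F$ in different parts, and such copies escape the bad-edge deletion; an $r$-partite $G$ together with tight-connectedness forces every folding to identify two vertices inside a single part of $F$, which is exactly what the cleanup uses. Given that, the only remaining issue is elementary parameter bookkeeping (the choice $n=\Theta(D)$ and existence of the near-extremal $r$-partite $\c{F}$-free $G$), which I expect to be routine.
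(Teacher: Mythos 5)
Your proposal is correct and takes essentially the same route as the paper: a uniformly random map into a near-extremal $r$-partite $\mathcal{F}$-free target on $\Theta(D)$ vertices, with tight-connectedness forcing any non-injectivity on a copy of $F$ to occur inside a single part and hence to produce two edges sharing an $(r-1)$-set whose remaining vertices have the same image, which the codegree bound $D$ makes rare. The only difference is one of bookkeeping: the paper builds this conflict-freeness into the definition of the random subgraph via local isomorphisms and the family $\mathcal{H}(\mathcal{F})$ (Lemma~\ref{lem:homGen}, followed by the observation that for tightly connected $\mathcal{F}$ one may take the target $r$-partite), whereas you delete the expected few ``bad'' edges afterwards; note also that your bad-edge count is really $O(C^{-1-\gam}D^{-\gam})e(H)$, which is what makes the comparison with $\E\, e(H')=\Theta(C^{-\gam}D^{-\gam})e(H)$ work for every $\gam>0$, not the weaker $O(C^{-1}D^{-\gam})e(H)$ you wrote.
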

	
	Given an $r$-partite $r$-graph $F$ and $r$-partition $U_1\cup \cdots \cup U_r$, define $\pa F\big[\bigcup_{j\ne i} U_j\big]$ to be the $(r-1)$-graph on $\bigcup_{j\ne i} U_j$ with all edges of the form $e\cap \bigcup_{j\ne i} U_j$ for all $e\in E(F)$.  That is, it is the $(r-1)$-graph induced by the parts excluding $U_i$.   Define the \textit{projection family} \[\textstyle{\boldsymbol{\pi}(F)=\{\pa F\big[\bigcup_{j\ne i} U_j\big]:i\in[r],\ U_1,\ldots,U_r\tr{ is an }r\tr{-partition of }F\}}.\]  For example, $\boldsymbol{\pi}(K_{s_1,s_2,s_3})=\{K_{s_1,s_2},K_{s_1,s_3},K_{s_2,s_3}\}$. For a family of $r$-graphs $\c{F}$ we define $\boldsymbol{\pi}(\c{F})=\bigcup_{F\in \c{F}}\boldsymbol{\pi}(F)$.  The following theorem is effective for $r$-graphs $H$ with high codegrees:
	
	\begin{thm}\label{thm:codeg}
		For $r\ge 3$, let $\c{F}$ be a family of $r$-partite $r$-graphs and $\gam>0$ such that for all $\tilde{\Del}\ge 3$ and all $(r-1)$-graphs $G$ with maximum degree at most $\tilde{\Del}$,
		\[\ex(G,\boldsymbol{\pi}(\c{F}))= \Om( \tilde{\Del}^{-\gam}(\log \tilde{\Del})^{3-r})\cdot e(G).\]
		If $H$ is an $r$-partite $r$-graph  with maximum degree at most $\Del\ge 2$ such that at least half of the edges of $H$ contain an $(r-1)$-set with $(r-1)$-degree at least $D$, then
		\[\ex(H,\c{F})=\Om( \Del^{-\gam}D^{\gam}(\log \Del)^{2-r}) \cdot e(H).\]
	\end{thm}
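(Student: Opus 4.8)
The plan is to reduce to the $(r-1)$-uniform case via a random projection argument, using the hypothesis on $\ex(G,\boldsymbol{\pi}(\c F))$ as a black box. Fix an $r$-partition $U_1\cup\cdots\cup U_r$ of $V(H)$. For each edge $e\in E(H)$ that contains an $(r-1)$-set of $(r-1)$-degree at least $D$, there is some coordinate $i=i(e)$ such that $e\sm U_{i(e)}$ has high $(r-1)$-degree; by pigeonhole, for a fixed index $i\in[r]$ a $\tfrac{1}{2r}$-fraction (at least) of the edges of $H$ have $i(e)=i$, so restrict attention to the sub-$r$-graph $H'$ consisting of these edges, and to the corresponding link structure. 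For $v\in U_i$, let $G_v$ be the $(r-1)$-graph on $\bigcup_{j\ne i}U_j$ whose edges are $\{e\sm\{v\}: e\in E(H'),\ v\in e\}$; since each such edge of $G_v$ is an $(r-1)$-set of $(r-1)$-degree at least $D$ in $H$, and $v$ itself has degree at most $\Del$, I want to extract from the $G_v$'s a single $(r-1)$-graph $G$ with controlled maximum degree so that an $\boldsymbol{\pi}(\c F)$-free subgraph of $G$ lifts back.

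The key step is the following \emph{sparsifying selection}: independently put each $v\in U_i$ into a set $W$ with probability $q$, to be optimized, and let $G=\bigcup_{v\in W}G_v$ be the (multi-)union of the links, which we view as a weighted or colored $(r-1)$-graph. The maximum degree of $G$ is at most $q$ times a quantity controlled by $\Del$ and the codegree profile; more precisely each $(r-1)$-set $f$ lies in at most $\Del/D \cdot (\text{something})$ of the $G_v$, so choosing $q\asymp D/\Del$ makes the expected maximum degree $\tilde\Del$ of $G$ of order $1$ up to logs — here is where the $\log$ factors in the statement enter, via a union bound / concentration argument à la the first-moment bounds used elsewhere in the paper. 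Then apply the hypothesis to $G$: there is an $\boldsymbol{\pi}(\c F)$-free subgraph $G^\ast\sub G$ with $e(G^\ast)=\Om(\tilde\Del^{-\gam}(\log\tilde\Del)^{3-r})e(G)$. Lifting $G^\ast$ back: keep an edge $e\in E(H')$ with $v=e\cap U_i\in W$ iff $e\sm\{v\}\in E(G^\ast)$. The resulting subgraph $H^\ast\sub H'$ has no copy of any $F\in\c F$: a copy of $F$ in $H^\ast$ would, upon deleting the part mapped into $U_i$, yield a copy of the corresponding member of $\boldsymbol{\pi}(F)$ inside $G^\ast$, contradiction. Counting edges: $\E[e(H^\ast)]\gtrsim q\cdot\tilde\Del^{-\gam}(\log\tilde\Del)^{3-r}\cdot e(H')$, and substituting $q\asymp D/\Del$, $\tilde\Del\asymp\log\Del$ (or a power thereof) gives a bound of the form $\Del^{-\gam}D^{\gam}(\log\Del)^{2-r}\cdot e(H)$ after absorbing constants and the factor $r$.

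I would then finish by derandomizing or simply noting that the expectation bound suffices to produce one good $H^\ast$. One should be slightly careful that when we take the union $\bigcup_{v\in W}G_v$ as a simple $(r-1)$-graph (collapsing multiplicities), we do not lose too many edges: an edge $f\in E(G)$ may come from several $v\in W$, so $e(G)$ could be much smaller than $\sum_{v\in W}e(G_v)\asymp q\,e(H')$. This is resolved exactly by the high-codegree hypothesis in reverse — we do not need $e(G)$ large, we need that each retained edge of $G^\ast$ can be \emph{re-expanded} to many edges of $H^\ast$; an edge $f\in E(G^\ast)$ lifts to every $v\in W$ with $f\in E(G_v)$, i.e. to $|N_H(f)\cap W|$ edges, whose expectation is $q\cdot d_{r-1}(f)\ge qD$. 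So the correct accounting is: choose the subgraph at the level of $G$ restricted to edges of $(r-1)$-degree $\ge D$ (guaranteed for all our $f$ by construction), obtain $G^\ast$, then lift each surviving $f$ to all of its $\ge D$ preimages that landed in $W$, giving $\E[e(H^\ast)]\ge qD\cdot e(G^\ast)\gtrsim qD\cdot\tilde\Del^{-\gam}(\log\tilde\Del)^{3-r}e(G)$, and since $e(G)\ge e(H')/(q^{-1}\cdot(\text{mult bound}))\gtrsim$ enough, the numbers work out to the claimed exponent.

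The main obstacle I anticipate is the degree control on $G$: bounding the maximum degree $\tilde\Del$ of the random union of links sharply enough that $(\log\tilde\Del)^{3-r}$ upgrades to $(\log\Del)^{2-r}$ with exactly one extra logarithmic factor, while simultaneously ensuring $e(G)$ is not collapsed by multiplicity. This is the delicate two-sided estimate, and it is presumably where the hypotheses $\Del\ge2$, $\tilde\Del\ge3$, and the precise power $2-r$ versus $3-r$ are calibrated; I expect it requires a Chernoff bound on $|N_H(f)\cap W|$ uniformly over the (at most $\Del$-many per vertex, hence polynomially many) relevant $(r-1)$-sets $f$, together with the observation that $q\asymp D/\Del$ makes both the per-vertex load and the per-edge multiplicity of order $\mathrm{polylog}$.
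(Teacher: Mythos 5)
Your overall architecture (pigeonhole over the $r$ coordinates, project the high-codegree edges onto $\bigcup_{j\ne i}U_j$, apply the hypothesis to get a $\boldsymbol{\pi}(\c{F})$-free $(r-1)$-graph, and lift back using the codegrees, with $\c{F}$-freeness following because any copy of $F$ in the lift would project to a member of $\boldsymbol{\pi}(\c{F})$) is exactly the right skeleton and matches the paper. The genuine gap is the random vertex-sparsification step, which both fails to deliver the degree control you claim and destroys the lifting factor. First, the claim that choosing $q\asymp D/\Del$ makes the maximum degree of $G=\bigcup_{v\in W}G_v$ of order $1$ up to logs is false: if, say, every $(r-1)$-set in the shadow has codegree exactly $D$ with $D=\sqrt{\Del}$, then a vertex $u$ of full degree lies in about $\Del/D$ shadow sets, each surviving with probability about $qD=D^2/\Del$, so its expected degree in $G$ is about $D=\sqrt{\Del}$ --- no better than the deterministic bound $\Del/D$ that the unsparsified shadow already satisfies (each shadow set through $u$ accounts for at least $D$ of the at most $\Del$ edges at $u$). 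Second, in that same regime $qD=D^2/\Del$ may be far below $1$, so conditioned on an edge $f$ of $G$ surviving it typically has exactly one preimage in $W$; your accounting $\E[e(H^\ast)]\ge qD\cdot e(G^\ast)$ conflates the unconditional expectation $q\,d(f)$ with the conditional count given survival (and $G^\ast$ is itself a function of $W$, so these expectations do not factor). Working the numbers in the codegree-regular example gives roughly $D^{-\gam}(D/\Del)\cdot e(H)$, which is polynomially smaller than the target $\Om\bigl((D/\Del)^{\gam}\bigr)e(H)$.

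What is missing is a mechanism to make the lifting factor, the shadow size, and the shadow's maximum degree match when codegrees are spread over $[D,\Del]$; randomizing over $U_i$ does not do this. The paper's fix is deterministic: partition the shadow edges into dyadic codegree classes $G_k=\{f: 2^kD\le d_H(f)<2^{k+1}D\}$, pigeonhole over the $O(\log\Del)$ classes to find one class $G_K$ carrying an $\Om(1/\log\Del)$ fraction of the relevant edges (this is the source of the single extra logarithm, i.e. $(\log\Del)^{2-r}$ versus $(\log\tilde{\Del})^{3-r}$), note that codegree regularity forces $\Del(G_K)=O(\Del/(2^KD))$ and $e(G_K)=\Om\bigl(e(H)/(2^KD\log\Del)\bigr)$, apply the hypothesis to $G_K$, and lift each retained shadow edge to all of its at least $2^KD$ extensions. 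If you replace your sparsification step with this dyadic decomposition, the rest of your argument (the index pigeonhole and the projection-freeness of the lift) goes through essentially verbatim.
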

	The logarithmic terms of Theorem~\ref{thm:codeg} are a product of its proof, and we suspect that these terms can be removed.
	
	\subsection{Tight Cycles}
	
	Theorems \ref{thm:homGen} and \ref{thm:codeg} allow us to prove effective bounds on $\ex(H,\c{F})$ for a wide variety of $r$-graphs, and for simplicity we focus on the case of
	tight cycles. For integers $r < k$, the {\em tight $k$-cycle}  $TC_{k}^r$ is the $r$-graph with vertex set $\{u_0,\ldots,u_{k - 1}\}$ consisting of the edges $\{u_i,u_{i+1},\ldots,u_{i+r-1}\}$ for $0 \le i < k$  with subscripts written modulo $k$. For instance, $TC_{r+1}^r = K_{r+1}^r$ and $TC_{k}^r$ is $r$-partite if and only if $k$ is a multiple of $r$, in which case its unique $r$-partition up to relabeling of parts has $u_j\in U_i$ whenever $j\equiv i\mod r$. For $r = 2$, the tight $k$-cycle $TC_{k}^2$ is precisely $C_{k}$, the cycle of length $k$, and a well-known conjecture of
	Erd\H{o}s and Simonovits~\cite{ES} states that for all $\ell \geq 2$,
	\begin{equation}\label{conj:es}
	\ex(n,\{C_3,C_4,\dots,C_{2\ell}\}) = \Theta(n^{1 + 1/\ell})
	\end{equation}
	as $n \rightarrow \infty$. When $r = 2$ and $\ell \in \{2,3,5\}$, (\ref{conj:es}) is true due to the existence of generalized polygons -- see Benson~\cite{Benson} for the first description in terms of extremal graph theory, Wenger~\cite{W} for an elementary presentation, and~\cite{V} for a survey. We prove the following theorem for relative Tur\'{a}n numbers of tight cycles:
	
	\begin{thm}\label{thm:tightGen}
		Let $r \geq 2$ and let $\ell \in \{2,3,5\}$. If $H$ is any $r$-graph with maximum degree $\Del$, then as $\Del \to \infty$,
		\begin{equation}\label{tight:genlower}
		\ex(H,\{TC_{r+1}^r,TC_{r+2}^r,\ldots,TC_{\ell r}^r\}) \ge \Del^{\f{-\ell+1}{\ell(r-1)}-o(1)}\cdot e(H).
		\end{equation}
	\end{thm}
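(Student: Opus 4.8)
The plan is to establish~(\ref{tight:genlower}) by induction on $r$, writing $\c{F}_r=\{TC_{r+1}^r,\dots,TC_{\ell r}^r\}$ and $\gam_r=\f{\ell-1}{\ell(r-1)}$, so that the target is $\ex(H,\c{F}_r)\ge\Del^{-\gam_r-o(1)}\cdot e(H)$ for every $r$-graph $H$ of maximum degree at most $\Del$. The base case $r=2$ should come directly from the resolution by Perarnau and Reed~\cite{PR} of the Foucaud--Krivelevich--Perarnau conjecture for $\c{F}_2=\{C_3,\dots,C_{2\ell}\}$, namely $\ex(H,\c{F}_2)=\Om(\ex(\Del,\c{F}_2)/\Del^2)\cdot e(H)$, combined with the generalized $(\ell+1)$-gons of order $(q,q)$ (which exist exactly when $\ell+1\in\{3,4,6\}$): their incidence graphs are bipartite, $(q+1)$-regular, and of girth $2\ell+2$, so $\ex(\Del,\c{F}_2)=\Theta(\Del^{1+1/\ell})$, yielding $\ex(H,\c{F}_2)=\Om(\Del^{-\gam_2})\cdot e(H)$.

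For the inductive step I would fix $r\ge3$, first pass to an $r$-partite spanning subgraph $H^*\sub H$ with $e(H^*)\ge(r!/r^r)\,e(H)$ and maximum degree still at most $\Del$ (take a best $r$-colouring of $V(H)$), and observe that since a copy of $TC_k^r$ forces $r\mid k$ in any $r$-partite host, it is enough to find a large subgraph of $H^*$ avoiding $\c{F}_{\mathrm{tc}}:=\{TC_{jr}^r:2\le j\le\ell\}\sub\c{F}_r$, each member of which is tightly connected. I would then fix a codegree threshold $D$ (optimised at the end) and split $H^*$: if at least half its edges lie in no $(r-1)$-set of $(r-1)$-degree $\ge D$, the corresponding subgraph has maximum $(r-1)$-degree $<D$, and Theorem~\ref{thm:homGen} applied with $\c{F}=\c{F}_{\mathrm{tc}}$ produces a $\c{F}_{\mathrm{tc}}$-free subgraph with $\Om(D^{-\gam^{\circ}})\cdot e(H)$ edges, where $\gam^{\circ}$ is read off from an ordinary Tur\'{a}n bound $\ex(n,\c{F}_{\mathrm{tc}})=\Om(n^{r-\gam^{\circ}})$; otherwise at least half its edges lie in an $(r-1)$-set of $(r-1)$-degree $\ge D$, and I would apply Theorem~\ref{thm:codeg} to $H^*$. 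Its hypothesis concerns $\ex(G,\boldsymbol{\pi}(\c{F}_{\mathrm{tc}}))$ over $(r-1)$-graphs $G$ of bounded maximum degree, and here the key point is the projection identity $\boldsymbol{\pi}(TC_{jr}^r)=\{TC_{j(r-1)}^{r-1}\}$: in the unique $r$-partition of $TC_{jr}^r$ each edge misses exactly one vertex of any fixed part, and deleting that part turns consecutive $r$-windows into consecutive $(r-1)$-windows. Hence $\boldsymbol{\pi}(\c{F}_{\mathrm{tc}})=\{TC_{j(r-1)}^{r-1}:2\le j\le\ell\}\sub\c{F}_{r-1}$, so $\ex(G,\boldsymbol{\pi}(\c{F}_{\mathrm{tc}}))\ge\ex(G,\c{F}_{r-1})\ge\tilde\Del^{-\gam_{r-1}-o(1)}\cdot e(G)$ by induction, which for any fixed $\ep>0$ exceeds $\Om(\tilde\Del^{-\gam_{r-1}-\ep}(\log\tilde\Del)^{3-r})\cdot e(G)$ for large $\tilde\Del$ (small $\tilde\Del$ handled by a trivial deletion bound); Theorem~\ref{thm:codeg} then gives a $\c{F}_{\mathrm{tc}}$-free subgraph with $\Om(\Del^{-\gam_{r-1}-\ep}D^{\gam_{r-1}+\ep}(\log\Del)^{2-r})\cdot e(H)$ edges.

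To supply $\gam^{\circ}=(\ell-1)/\ell$ I would use a recursive construction paralleling the induction: from an $(r-1)$-graph $G$ on $n$ vertices that is $\{TC_{j(r-1)}^{r-1}:2\le j\le\ell\}$-free with $\Om(n^{r-2+1/\ell})$ edges, adjoin $n$ fresh vertices $w_1,\dots,w_n$ and take all edges $e\cup\{w_i\}$ with $e\in E(G)$, $i\in[n]$; this is an $r$-partite $r$-graph with $\Theta(n^{r-1+1/\ell})$ edges, and a copy of $TC_{jr}^r$ in it would, after deleting its equally spaced fresh vertices, leave $j(r-1)$ vertices spanning a $TC_{j(r-1)}^{r-1}$ in $G$ — impossible. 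Starting from the generalized $(\ell+1)$-gon at $r=2$ and iterating gives $\ex(n,\c{F}_{\mathrm{tc}})=\Om(n^{r-1+1/\ell})$, i.e.\ $\gam^{\circ}=(\ell-1)/\ell$. Finally, not knowing which codegree case occurs, I would take $D=\Del^{(\gam_{r-1}+\ep)/(\gam^{\circ}+\gam_{r-1}+\ep)}$ to balance the two bounds, obtaining $\ex(H,\c{F}_r)\ge\ex(H^*,\c{F}_{\mathrm{tc}})\ge\Del^{-\gam^{\circ}(\gam_{r-1}+\ep)/(\gam^{\circ}+\gam_{r-1}+\ep)-o(1)}\cdot e(H)$; letting $\ep\to0$ slowly with $\Del$, the exponent tends to $-\gam^{\circ}\gam_{r-1}/(\gam^{\circ}+\gam_{r-1})$, which is exactly $-\gam_r$ because $1/\gam_r=1/\gam^{\circ}+1/\gam_{r-1}=\f{\ell}{\ell-1}+\f{\ell(r-2)}{\ell-1}$.

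The main obstacle is making the induction close with the \emph{exact} exponent rather than a weaker one, which hinges on two precise inputs — the projection identity $\boldsymbol{\pi}(TC_{jr}^r)=\{TC_{j(r-1)}^{r-1}\}$ together with tight-connectedness of $\c{F}_{\mathrm{tc}}$, so that Theorems~\ref{thm:homGen} and~\ref{thm:codeg} both apply to precisely the family that survives the $r$-partite reduction, and the recursive construction pinning down $\gam^{\circ}=(\ell-1)/\ell$ — and on carefully accounting for errors: each of the $r-2$ levels of recursion invokes Theorem~\ref{thm:codeg}, contributing a $(\log\Del)^{2-r}$ factor and an $\ep$-loss in the exponent, and one must check these, together with the ``unknown case'' loss in the choice of $D$, aggregate to only $\Del^{-o(1)}$. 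Removing the polylogarithmic losses of Theorem~\ref{thm:codeg} (which the authors expect is possible) and the $o(1)$ would then give the clean bound $\Del^{(-\ell+1)/(\ell(r-1))}\cdot e(H)$.
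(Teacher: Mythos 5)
Your overall architecture is the same as the paper's: induct on $r$, pass to an $r$-partite subgraph, split on whether at least half the edges contain an $(r-1)$-set of $(r-1)$-degree at least $D$, handle the low-codegree case by the random-homomorphism machinery with the Tur\'an bound $\Om(n^{r-1+1/\ell})$ from an iterated extension of a girth-$(2\ell+2)$ bipartite graph, and handle the high-codegree case by Theorem~\ref{thm:codeg} via the projection identity $\boldsymbol{\pi}(TC_{jr}^r)=\{TC_{j(r-1)}^{r-1}\}$ and the inductive hypothesis (your handling of the $o(1)$ via an $\ep$-loss per level, a matching bound for bounded $\tilde\Del$, and balancing $D$ at the end is a legitimate repackaging of the paper's choice $D=\Del^{1/(r-1)}$ and explicit $(\log\Del)^{2-r}$ bookkeeping; the exponent arithmetic checks out).

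There is, however, one genuine gap, in the low-codegree branch: you invoke Theorem~\ref{thm:homGen} for $\c{F}_{\mathrm{tc}}=\{TC_{jr}^r:2\le j\le\ell\}$ on the grounds that ``each member is tightly connected,'' and this is false once $j\ge 4$. In $TC_{4r}^r$ (and $TC_{5r}^r$) the vertices $u_0$ and $u_{2r}$ lie in the same part of the unique $r$-partition, but two edges intersect in $r-1$ vertices only if they are consecutive windows, and no window through $u_0$ is consecutive to a window through $u_{2r}$ (their starting positions are at cyclic distance at least $r+1$). At $r=2$ this is exactly the statement that $C_8$ and $C_{10}$ are not tightly connected, which is the point of the paper's $C_8$ figure: $\HI{C_8}$ contains a bipartite graph (two $C_4$'s sharing an edge) other than $C_8$. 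Consequently, for $\ell=5$ (and already for the member $TC_{4r}^r$ when $\ell=5$) Theorem~\ref{thm:homGen} does not apply, and an ordinary bound on $\ex(n,\c{F}_{\mathrm{tc}})$ is not enough: the random-homomorphism argument (Lemma~\ref{lem:homGen}) needs the target graph to avoid the whole family $\HI{\c{F}_{\mathrm{tc}}}$ of surjective local-isomorphic images, which for long tight cycles contains additional $r$-partite degenerate images. The paper circumvents this by proving the stronger bound \eqref{eq:tight}, namely $\ex(n,\HI{TC_{r+1}^r,\ldots,TC_{\ell r}^r})=\Om(n^{r-1+1/\ell})$, and applying Lemma~\ref{lem:homGen} directly; the key is that the base bipartite graph is $\{C_3,\ldots,C_{2\ell}\}$-free (all girths up to $2\ell$, not just the even lengths $2\ell'$ that occur as honest projections), because a local isomorphism of $C_{2\ell'}$ may only contain a shorter cycle. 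Your construction is essentially the paper's, so the repair is to show it is $\HI{\c{F}_{\mathrm{tc}}}$-free (restrict a putative local-isomorphic image of $TC_{\ell' r}^r$ to the two distinguished parts and observe it yields a cycle of length at most $2\ell'$ in the girth graph) and then use Lemma~\ref{lem:homGen} in place of Theorem~\ref{thm:homGen}; for $\ell\in\{2,3\}$, where all members are indeed tightly connected, your branch goes through as written.
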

	
	The inequality (\ref{tight:genlower}) generalizes the results of Foucaud, Krivelevich and Perarnau~\cite{FKP} for short cycles of even length in graphs. The proof
	of Theorem \ref{thm:tightGen} relies on an effective lower bound on the extremal function for tight cycles $TC_{\ell r}^r$ when $\ell \in \{2,3,5\}$.
	The first moment method in the random $r$-graph $H_{n,p}^r$ gives $\ex(n,TC_{\ell r}^r) = \Omega(n^{r - 1 + (r - 1)/(\ell - 1)})$ for any $\ell \geq r + 1$.
	We give a simple proof of a slight improvement as follows:
	
	\begin{thm}\label{thm:tightTuran}
		For all $r \geq 2$ and $\ell \in \{2,3,5\}$,
		\begin{equation}\label{tight:lower}
		\ex(n,\{TC_{r+1}^r,TC_{r+2}^r,\ldots,TC_{\ell r}^r\}) = \Omega(n^{r-1+1/\ell}).
		\end{equation}
	\end{thm}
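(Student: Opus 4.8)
The plan is to adapt the standard random-deletion (alteration) method in $H_{n,p}^r$, but in a way that exploits the specific structure of short tight cycles, namely that a tight $k$-cycle $TC_k^r$ contains roughly $k$ edges on $k$ vertices and is therefore a relatively ``dense'' configuration whose expected count can be controlled. Concretely, I would take $H = H_{n,p}^r$ with $p = c\,n^{-1 + 1/\ell - (r-1)}$ chosen so that $\E[e(H)] = p\binom{n}{r} = \Theta(n^{1/\ell})$ — wait, more carefully one wants $\E[e(H)] = \Theta(n^{r-1+1/\ell})$, so one sets $p = \Theta(n^{-r+1/\ell+1}) \cdot n^{r}/\binom nr$-type scaling, i.e. $p n^r = \Theta(n^{r-1+1/\ell})$, giving $p = \Theta(n^{-1+1/\ell})$. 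For each $k$ with $r+1 \le k \le \ell r$, the number of potential copies of $TC_k^r$ in $K_n^r$ is $\Theta(n^k)$ and each such copy has exactly $k$ edges, so $\E[\#TC_k^r] = \Theta(n^k p^k) = \Theta(n^{k(1/\ell)})$.

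The key step is then to check that for every such $k \le \ell r$ this expected number of short tight cycles is at most, say, $\tfrac12 \E[e(H)] = \Theta(n^{r-1+1/\ell})$ — more precisely $o(\E[e(H)])$ would suffice for a clean alteration, or a constant fraction for a weighted-deletion argument. With the scaling above this amounts to the inequality $k/\ell \le r-1+1/\ell$, i.e. $k \le \ell(r-1) + 1 = \ell r - \ell + 1$, which fails for the largest cycles $k$ close to $\ell r$ when $\ell \ge 2$. This is the crux, and it is exactly where the restriction $\ell \in \{2,3,5\}$ and the phrase ``slight improvement'' in the statement come in: the naive first-moment scaling only gives the exponent $r-1+(r-1)/(\ell-1)$ quoted just before the theorem, and to push it up to $r-1+1/\ell$ one cannot simply count all copies of all $TC_k^r$ with equal weight. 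Instead I would recall (or re-derive) that for $\ell \in \{2,3,5\}$ the generalized-polygon / Wenger-type constructions give graphs of girth $> 2\ell$ with $\Theta(n^{1+1/\ell})$ edges, and then \emph{lift} such an extremal bipartite-incidence-type construction from graphs to $r$-graphs.

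So the real approach I would take is constructive rather than probabilistic: start from a $C_{\le 2\ell}$-free graph $G_0$ on $N$ vertices with $\Theta(N^{1+1/\ell})$ edges (existence for $\ell\in\{2,3,5\}$ being quoted from the generalized polygon literature cited in the excerpt), and build an $r$-graph $H$ on $n = \Theta(N)$ vertices whose edges are indexed by the edges of $G_0$ together with an $(r-2)$-element ``padding'' — e.g. fix a small set and attach it, or more usefully use a blow-up/expansion where each edge $uv$ of $G_0$ is replaced by tight-path-like $r$-sets along a fixed linear order. The point is to arrange that any tight cycle $TC_k^r$ in $H$ with $k \le \ell r$ would project (via the ``shadow'' or link structure, cf.\ the $\boldsymbol\pi(F)$ operation defined earlier) down to a closed walk of length $\le 2\ell$ in $G_0$, contradicting its girth. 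One then just needs $e(H) = \Omega(n^{r-1+1/\ell})$, which follows since $H$ has one $r$-edge (or $\Theta(1)$ of them) per edge of $G_0$ spread over $\Theta(N) = \Theta(n)$ extra coordinates: $e(H) = \Theta(N^{1+1/\ell} \cdot N^{r-2}) = \Theta(n^{r-1+1/\ell})$.

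The main obstacle I anticipate is the bookkeeping in the lifting step: one must define the $r$-graph $H$ precisely enough that \emph{every} tight cycle $TC_k^r$ for every $r+1 \le k \le \ell r$ — not just the $r$-divisible ones — is genuinely forbidden, and short non-$r$-partite tight cycles (like $TC_{r+1}^r = K_{r+1}^r$ itself) behave differently from the $r$-partite ones. I would handle $K_{r+1}^r = TC_{r+1}^r$ and the other short cases by making $H$ itself have a ``linear''-type property (two edges meeting in $\ge r-1$ vertices are forced to come from adjacent structure in $G_0$), so that a copy of any short tight cycle forces a short closed walk downstairs; verifying this for the whole range $k \in \{r+1,\dots,\ell r\}$ uniformly, rather than cycle-by-cycle, is the delicate part. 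If a fully explicit construction proves awkward, the fallback is the alteration method with \emph{non-uniform} deletion weights — delete each copy of $TC_k^r$ with a probability depending on $k$ so that the total expected loss is $o(n^{r-1+1/\ell})$ while still killing all short tight cycles — but one checks this only works for $\ell \in \{2,3,5\}$ precisely because those are the values for which the corresponding graph counting inequalities can be made to close, again tracing back to the generalized polygon bound $\ex(n,\{C_3,\dots,C_{2\ell}\}) = \Theta(n^{1+1/\ell})$.
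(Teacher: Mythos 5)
Your eventual constructive plan points in the right direction and is, in spirit, the paper's proof, but as written there is a genuine gap: you never pin down the construction, and the one claim that carries all the content --- that the lifted $r$-graph contains \emph{no} $TC_k^r$ for any $r+1\le k\le \ell r$ --- is exactly what you leave unproved (your ``delicate part''). Of the variants you float, ``attach a fixed $(r-2)$-set'' gives only $\Theta(1)$ edges per edge of $G_0$ and the wrong count; only ``pad each edge of $G_0$ with one vertex from each of $r-2$ new parts, in all possible ways'' matches your arithmetic $N^{1+1/\ell}\cdot N^{r-2}$. Neither of your proposed patches closes the verification: a linearity-type condition is not what excludes the non-$r$-divisible tight cycles (and is not needed), and the weighted-alteration fallback cannot work at all --- an $F$-free subgraph must destroy \emph{every} copy, and at $p=\Theta(n^{-1+1/\ell})$ the expected number of copies of $TC_{\ell r}^r$ alone is $\Theta(n^{r})$, polynomially larger than the expected number of edges $\Theta(n^{r-1+1/\ell})$, exactly as your own first-moment computation shows; no reweighting of deletions and no choice of $\ell\in\{2,3,5\}$ changes that, and lowering $p$ loses the exponent.

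The missing step has a short resolution, which is the paper's argument. Take $G$ on $V_1\cup V_2$ with $|V_1|=|V_2|=n/r$, $\{C_3,\ldots,C_{2\ell}\}$-free with $e(G)=\Omega(n^{1+1/\ell})$ (generalized polygons for $\ell\in\{2,3,5\}$), and let $H$ on $V_1\cup\cdots\cup V_r$ consist of all transversal $r$-sets $e$ with $e\cap(V_1\cup V_2)\in E(G)$, so $e(H)=(n/r)^{r-2}e(G)=\Omega(n^{r-1+1/\ell})$. The $r$-partiteness of $H$ then does all the work. If $r\nmid k$, then $TC_k^r$ is not $r$-partite and cannot occur in $H$ at all; this disposes of $TC_{r+1}^r=K_{r+1}^r$ and every other non-divisible length with no auxiliary ``linear-type'' property. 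If $k=\ell' r\le \ell r$, the unique $r$-partition of $TC_{\ell' r}^r$ (vertices $u_j$ grouped by $j\bmod r$) forces each residue class of a copy into a single part $V_i$; the two residue classes landing in $V_1$ and $V_2$ span, in the $2$-shadow of the copy, a cycle of length $2\ell'\le 2\ell$, and every edge of that cycle equals $e\cap(V_1\cup V_2)$ for some edge $e$ of the copy, hence lies in $G$ --- contradicting that $G$ has girth greater than $2\ell$. The paper in fact proves the slightly stronger statement \eqref{eq:tight}, namely that this $H$ avoids the whole local-isomorphism family $\HI{TC_{r+1}^{r},\ldots,TC_{\ell r}^{r}}$ (which is what Theorem~\ref{thm:tightGen} later needs), by running the same projection argument for a local isomorphism rather than an embedding.
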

	
	Upper bounds for this extremal function were left as an open problem by Conlon~\cite{C} in connection with extremal problems for cycles in hypercubes,
	and for example the current best upper bound for $TC_6^3$ is $\ex(n,TC_6^3) \leq \ex(n,K_{2,2,2}) = O(n^{11/4})$.

	\medskip

	\subsection{Organization and Notation}
	
	In Section~\ref{sec:hom} we prove Theorem~\ref{thm:homGen} using random homomorphisms.  In Section~\ref{sec:codeg} we prove Theorems~\ref{thm:codeg} and \ref{thm:tightTuran}.  We then prove our main results for general hosts Theorems~\ref{thm:K2} and \ref{thm:tightGen} in Section~\ref{sec:proofs}. In Section~\ref{sec:rand} we prove our main result for random hosts Theorem~\ref{thm:randK}.  Concluding remarks and open problems are given in Section~\ref{sec:concluding}.
	
	We gather some notation and definitions that we use throughout the text.  A set of size $k$ will be called a $k$-set.  If $H$ is an $r$-graph, then the number of edges containing a $k$-set $S=\{v_1,\ldots,v_k\}\sub V(H)$ is called the $k$-degree of $S$ and is denoted by $d_H(S)$ or $d_H(v_1,\ldots,v_k)$, and we omit the subscript wherever $H$ is understood from context. If $\chi$ is a map from vertices of $H$ and $e=\{v_1,\ldots,v_r\}\in E(H)$, we define the set  $\chi(e)=\{\chi(v_1),\ldots,\chi(v_r)\}$.  We often make use of the following basic fact due to Erd\H{o}s and Kleitman~\cite{EK}: every $r$-graph $H$ has an $r$-partite subgraph with at least $r^{-r}e(H)$ edges.  Throughout the text we omit ceilings and floors for ease of presentation.
	
	\section{Hosts with Low Codegrees : Proof of Theorem \ref{thm:homGen}}\label{sec:hom}
	
	We begin with an informal discussion of the technique for giving lower bounds on $\ex(H,F)$ when $H$ has low codegrees, which is based on techniques of Foucaud, Krivelevich and Perarnau~\cite{FKP} and Perarnau and Reed~\cite{PR} for graphs.  We will try to construct a subgraph  $H'\sub H$ that ``looks like'' another $r$-graph $J$ which is $F$-free and has many edges.  One way to try and do this is to consider a random map $\chi:V(H)\to V(J)$ and to keep the edges $e\in E(H)$  which have $\chi(e)\in E(J)$.  However, one quickly sees that this $H'$ may not be $F$-free.  Indeed, if $F$ is $r$-partite, then it is possible for $\chi$ to map every edge of $H$ to a single edge of $J$, giving $H'=H$.
	
	We get around this issue by doing two additional steps.  The first is to put constraints on $H'$ to disallow edges $e$ which have $\chi(e)=\chi(f)$ for some $f$ with $|e\cap f|=r-1$.  The second is to choose $J$ so that it avoids not only $F$, but also a family of $r$-graphs related to $F$.  More precisely, for $r$-graphs $F,F'$ we say that a map $\phi:V(F)\to V(F')$ is a \textit{local isomorphism} if
	\begin{itemize}
		\item[$(a)$] $\phi$ is a homomorphism and
		\item[$(b)$] $\phi(e)\ne \phi (f)$ for $e,f\in E(F)$ with $|e\cap f|=r-1$.
	\end{itemize}    
	For example, with $F=K_{s_1,\ldots,s_r}$, a local isomorphism $\phi:V(F)\to V(F)')$ exists only when $F'$ contains a subgraph isomorphic to $F$, as mapping any two vertices of $F$ to the same vertex in $F'$ would cause two edges intersecting in $r-1$ spots to map to the same edge.  As another example, the figure below illustrates a local isomorphism from $C_8$ to two $C_4$'s sharing an edge (where the map sends the two star/diamond vertices on the left to the star/diamond vertex on the right),
	
	\[
	\begin{tikzpicture}[scale=.5]
	\node at (0,0) {$\bullet$};
	\node at (1,1) { $\blacklozenge$};
	\node at (-1,1) {$\bullet$};
	\node at (2,2)  { $\bigstar$};
	\node at (-2,2)  {$\bigstar$};
	\node at (1,3) {$\bullet$};
	\node at (-1,3) {$\blacklozenge$};
	\node at (0,4) {$\bullet$};
	
	\draw (0,0) -- (-1,1);
	\draw (0,0) -- (1,1);
	\draw (1,1) -- (2,2);
	\draw (2,2) -- (1,3);
	\draw (1,3) -- (0,4);
	\draw (-1,1) -- (-2,2);
	\draw (-2,2) -- (-1,3);
	\draw (-1,3) -- (0,4);
	
	\node at (6,0) {$\bullet$};
	\node at (7,1) {$\blacklozenge$};
	\node at (5,1) {$\bullet$};
	\node at (6,2) {$\bigstar$};
	\node at (7,3) {$\bullet$};
	\node at (6,4) {$\bullet$};
	
	\draw (6,4) -- (7,1);
	\draw (6,0) --(7,1);
	\draw (7,1) -- (6,2);
	\draw (6,2) -- (7,3);
	\draw (7,3) -- (6,4);
	\draw (6,0) -- (5,1);
	\draw (6,2) -- (5,1);
	
	\end{tikzpicture}
	\]
	For an $r$-graph $F$ we define the set $\HI{F}$ to be the set of $F'$ for which there exists a local isomorphism $\phi:V(F)\to V(F')$ such that both $\phi$ and its induced map on the edge sets are surjective.  For $\c{F}$ a family of $r$-graphs we define $\HI{\c{F}}=\bigcup_{F\in \c{F}}\HI{F}$.  For example, our argument above and the definition of $\c{H}$ implies that $\c{H}(K_{s_1,\ldots,s_r})=\{K_{s_1,\ldots,s_r}\}$.  Also observe that $|V(F')|\le |V(F)|$ for any $F'\in \c{H}(F)$.
	
	In general it turns out that we want to choose a $J$ which is not only $F$-free but also $\c{H}(F)$-free, and this gives our main technical result for hosts with low codegrees.
	
	\begin{lem}\label{lem:homGen}
		Let $\c{F}$ be a family of $r$-graphs and $H$ an $r$-graph with maximum $(r-1)$-degree at most $D$.  Then as $D\to \infty$,
		\[\ex(H,\c{F})=\Om\l(\f{\ex(D,\HI{\c{F}})}{D^{r}}\r)\cdot e(H).\]
	\end{lem}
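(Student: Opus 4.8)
The plan is to use the random homomorphism strategy sketched informally before the lemma, making the two ``fixes'' precise. Let $J$ be an extremal $\HI{\c{F}}$-free $r$-graph on $D$ vertices with $\ex(D,\HI{\c{F}})$ edges, and let $\chi : V(H) \to V(J)$ be a uniformly random map. First I would define $H''$ to be the spanning subgraph of $H$ consisting of those edges $e \in E(H)$ with $\chi(e) \in E(J)$ \emph{and} $|\chi(e)| = r$, i.e.\ $\chi$ is injective on $e$. A single edge $e$ with a prescribed injective image has probability $r!/D^r$ of being kept in this way, but rather than working with $J$ directly it is cleaner to note $\E[e(H'')] \ge (\text{number of injective } r\text{-tuples mapping into } E(J))/D^r \cdot e(H)/\binom{?}{}$; concretely, for each $e$, $\Pr[\chi(e) \in E(J), |\chi(e)|=r] = r! \, e(J)/D^r$, so $\E[e(H'')] = r!\,\ex(D,\HI{\c{F}}) D^{-r} e(H)$.

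Next I would handle the ``bad'' edges: call $e \in E(H'')$ bad if there exists $f \in E(H)$ with $|e \cap f| = r-1$ and $\chi(e) = \chi(f)$. Deleting one edge from each such pair gives $H' \subseteq H''$. I would bound $\E[\#\text{bad edges}]$: fixing $e$, a potential partner $f$ shares an $(r-1)$-set $S$ with $e$, and there are at most $r \cdot D$ choices of $f$ (an $(r-1)$-subset $S\subset e$, then an edge through $S$ other than $e$, of which there are at most $D-1$ since the $(r-1)$-degree is at most $D$). Given that $\chi(e) \in E(J)$ is injective, the event $\chi(f) = \chi(e)$ forces $\chi$ on the one vertex of $f \setminus e$, which has probability $1/D$. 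So the expected number of bad edges is at most $r \cdot D \cdot \frac{1}{D} \cdot \E[e(H'')] = r \cdot \E[e(H'')]$ — that is too weak. I would instead tune the codomain: replace $J$ by $J$ together with $C \cdot D$ extra isolated vertices for a large constant $C = C(r)$, so the map lands in a set of size $D' = (C+1)D$; then edges are kept with probability $r!\,e(J)/(D')^r$, but the bad-edge probability picks up an extra factor $1/C$ from the rescaled ``forced vertex'' probability, namely each bad partner contributes $\le r(D-1)\cdot \frac{1}{D'} \le \frac{r}{C}$. Choosing $C = 4r$ gives $\E[\#\text{bad}] \le \frac14 \E[e(H'')]$, hence $\E[e(H')] \ge \frac34 \E[e(H'')] = \Omega(\ex(D,\HI{\c{F}})/D^r)\cdot e(H)$, absorbing the constant $(C+1)^{-r}$ into the $\Omega$.

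Finally I would verify $H'$ is $\c{F}$-free. Suppose $F \in \c{F}$ embeds in $H'$; the restriction of $\chi$ to the copy of $F$ is a map $\phi$ to $V(J)$. By construction of $H'$, $\phi$ sends every edge of the $F$-copy to an edge of $J$ (so $\phi$ is a homomorphism into $J$), and the removal of bad edges guarantees $\phi(e) \ne \phi(f)$ whenever $e,f$ are edges of the $F$-copy with $|e\cap f| = r-1$ — these are exactly conditions $(a)$ and $(b)$, so $\phi$ is a local isomorphism. Restricting to the image, $F$ admits a local isomorphism onto some $F'$ with $\phi$ and its edge-map surjective, i.e.\ $F' \in \HI{F} \subseteq \HI{\c{F}}$, and $F' \subseteq J$ as a subgraph, contradicting that $J$ is $\HI{\c{F}}$-free. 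Hence some choice of $\chi$ yields an $\c{F}$-free $H' \subseteq H$ with $e(H') \ge \E[e(H')] = \Omega(\ex(D,\HI{\c{F}})/D^r) \cdot e(H)$, proving the lemma.

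The main obstacle is the bad-edge estimate: a naive bound gives a constant times $\E[e(H'')]$ rather than a small fraction of it, and the fix — inflating the codomain by a constant factor depending only on $r$ — must be done carefully so that the keep-probability and bad-probability scale by different powers of the blow-up factor, leaving a net gain. One should also double-check the edge case $e(J) = 0$ (trivial) and that $\ex(D,\HI{\c{F}})$ is interpreted as $\ex(n,\HI{\c{F}})$ with $n = D$ as in the paper's notation, and that $D \to \infty$ is what allows the error terms and the constant $C(r)$ to be swallowed by the $\Omega(\cdot)$.
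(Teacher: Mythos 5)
Your proof is correct and is essentially the paper's argument: a uniformly random map into an $\HI{\c{F}}$-free graph, keeping edges whose images are edges and removing codegree conflicts, with $\c{F}$-freeness verified via the local isomorphism definition. The only differences are cosmetic — the paper takes the extremal $\HI{\c{F}}$-free graph on $2r^2D$ vertices where you pad the extremal graph on $D$ vertices with $\Theta(rD)$ isolated vertices, and it bounds the conflict event by conditioning rather than by an expected-deletion (alteration) count — both yielding the same $\Om(\ex(D,\HI{\c{F}})D^{-r})\cdot e(H)$ bound.
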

	\begin{proof}
		Let $J$ be an extremal $\c{H}(\c{F})$-free $r$-graph on $t:=2r^2D$ vertices and let $\chi:V(H)\to V(J)$ be chosen uniformly at random.  Let $H'\sub H$ be the (random) subgraph which keeps the edge $e\in E(H)$ if
		\begin{itemize}
			\item[$(1)$] $\chi(e)\in E(J)$, and
			\item[$(2)$] $\chi(f)\ne \chi(e)$ for any other $f\in E(H)$ with $|e\cap f|=r-1$.
		\end{itemize}
		
		We claim that $H'$ is $\c{F}$-free. Indeed, assume $H'$ contained some subgraph $F'$ isomorphic to $F\in \c{F}$.  Let $F''$ be the subgraph of $J$ with $V(F'')=\{\chi(v):v\in V(F')\}$ and $E(F'')=\{\chi(e):e\in E(F')\}$, and we note that $F'\sub H'$ implies that each edge of $F'$ satisfies (1), so every element of $E(F'')$ is an edge in $J$.  By conditions (1) and (2), $\chi$ is a surjective local isomorphism from $F'$ to $F''$, so $F''\in \HI{F}$, a contradiction to our condition on $J$.
		
		It remains to compute how many edges $H'$ has in expectation.  Given an edge $e\in E(H)$, let $A$ be the event that $\chi(e)\in E(J)$, let $\hat{E}$ be the set of edges $f\in E(H)$ with $|e\cap f|=r-1$, and let $B$ be the event that $\chi(f)\not\sub \chi(e)$ for all $f\in \hat{E}$, which in particular implies (2) for the edge $e$.  It is easy to see that \[\Pr[A]=r!e(J)t^{-r}.\]  It is also not hard to see for any $f\in \hat{E}$ that $\Pr[\chi(f)\sub \chi(e)|A]=\f{r}{t}$.  Because $\ol{B}$ (the complement of $B$) is the union of the events $\chi(f)\sub \chi(e)$ for $f\in \hat{E}$, the union bound gives  \[\Pr[B|A]\ge 1-|\hat{E}|\cdot rt^{-1}\ge 1-r^2D t^{-1}= \half ,\]
		where the last inequality used that there are at most $D$ edges $f$ intersecting a given $r-1$ sized subset of $e$.  Thus
		\[\Pr[e\in E(H')]\ge \Pr[A]\cdot \Pr[B|A]\ge \half r!e(J)t^{-r},\]
		and by linearity of expectation we have $\E[e(H')]\ge \half r! e(J)t^{-r}\cdot e(H)$. We conclude that there exists some subgraph $H''\sub H$ with at least this many edges which is $\c{F}$-free, and the result follows since $e(J)=\ex(t,\HI{\c{F}})$ and $t=2r^2D$.
	\end{proof}

	Lastly, we observe the following.
	\begin{lem}\label{lem:label}
		If $F$ is a tightly connected $r$-graph, then it has an $r$-partition which is unique up to relabeling the parts.
	\end{lem}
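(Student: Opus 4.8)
The plan is to show two things: first, that a tightly connected $r$-graph $F$ admits an $r$-partition of $V(F)$ into sets $U_1,\ldots,U_r$ such that every edge meets each $U_i$ in exactly one vertex; and second, that any two such partitions agree up to a permutation of the index set $[r]$. The key device is to define, on the vertex set of $F$, a relation that records when two vertices are forced into the same part, and to exploit tight connectivity to control this relation.

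First I would set up the ``same-part'' relation. Fix any edge $e_0=\{w_1,\ldots,w_r\}$ of $F$; in any valid $r$-partition these $r$ vertices lie in $r$ distinct parts, so we may label the parts so that $w_i\in U_i$. Now for an arbitrary vertex $v$, I want to determine which $U_i$ it belongs to. The idea is a ``propagation along tight links'' argument: if $f,g\in E(F)$ with $|f\cap g|=r-1$, say $f\setminus g=\{x\}$ and $g\setminus f=\{y\}$, then in any valid partition $x$ and $y$ must lie in the same part (the other $r-1$ vertices occupy $r-1$ of the parts, so both $x$ and $y$ are forced into the remaining one). Define $x\sim y$ to be the transitive closure of this elementary relation together with equality. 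Then $\sim$ is an equivalence relation, it refines any valid $r$-partition (each part is a union of $\sim$-classes), and — here is where tight connectedness enters — it has at most $r$ classes. Indeed, tight connectedness says precisely that for each $i$ and each pair of vertices lying in the would-be part $U_i$ of the fixed partition... but since we don't yet know a valid partition exists, I'd instead argue directly: I claim $\sim$ has exactly $r$ classes by showing (i) the $w_i$ are pairwise non-equivalent and (ii) every vertex is equivalent to some $w_i$.

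For (ii), the plan is to use that $F$ is tightly connected in the following form: given any vertex $v$, it lies in some edge $e$; I want to connect $e$ to $e_0$ by a sequence of edges each sharing $r-1$ vertices with the next, which would then force each vertex of $e$ to be $\sim$-equivalent to a corresponding vertex of $e_0$. Tight connectedness as stated gives, for $v$ and any other vertex $v'$ in the same candidate part, edges $e_1\ni v$, $e_2\ni v'$ with $|e_1\cap e_2|=r-1$; iterating this should let me walk from any edge to $e_0$. I then define $U_i$ to be the set of vertices equivalent to $w_i$; the classes partition $V(F)$ into $r$ parts, and because $\sim$ refines the relation ``forced equal in any partition,'' no edge can contain two vertices of the same $U_i$ (two such vertices, being $\sim$-equivalent through a chain of tight links, would collide in the one remaining part along that chain). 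For part (i), non-equivalence of the $w_i$: if $w_i\sim w_j$ for $i\ne j$, then tracing the chain of tight links realizing this equivalence and pushing $e_0$ along it yields an edge containing two vertices forced into the same part — but then $F$ has no valid $r$-partition at all, whereas it trivially has a proper vertex coloring, and one checks this contradicts the chain being a genuine sequence of tight links (each tight link preserves the multiset of occupied parts). This establishes existence. Uniqueness is then immediate: any valid $r$-partition must have each part be a union of $\sim$-classes, and since there are exactly $r$ classes and $r$ parts, each part is a single $\sim$-class, i.e.\ the partition coincides with $\{U_1,\ldots,U_r\}$ up to relabeling.

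The main obstacle I anticipate is part (ii) — verifying that tight connectedness really does let one reach every vertex of $F$ from the fixed edge $e_0$ via a chain of edges with consecutive intersections of size $r-1$, since the hypothesis is phrased per-part rather than globally. The cleanest route is probably to first prove the auxiliary statement: \emph{for any two edges $f,g$ of a tightly connected $F$, there is a sequence $f=f_0,f_1,\ldots,f_m=g$ of edges with $|f_{k}\cap f_{k+1}|=r-1$ for all $k$} — i.e.\ tight connectedness in the edge sense — and this should follow by a connectivity/induction argument from the vertexwise hypothesis. Once that auxiliary fact is in hand, everything else is bookkeeping with the relation $\sim$.
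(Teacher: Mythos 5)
Your central device is the right one, and it is the same forcing mechanism the paper uses: if $f,g\in E(F)$ with $|f\cap g|=r-1$, the two swapped vertices must lie in the same part of \emph{any} valid $r$-partition, so the transitive closure $\sim$ of this relation refines every valid partition. The genuine gap is that you decline to use the hypothesis as it is stated. In the paper, ``tightly connected'' is defined for an \emph{$r$-partite} $r$-graph, so a witnessing partition $U_1\cup\cdots\cup U_r$ with the swap property is part of the hypothesis; existence needs no proof, and the lemma is purely a uniqueness statement. Once you use that partition, your claims (i) and (ii) are immediate: $\sim$ refines $U_1\cup\cdots\cup U_r$, so the vertices of a fixed edge are pairwise inequivalent; and for distinct $u,u'\in U_i$ the edges $e_1\ni u$, $e_2\ni u'$ with $|e_1\cap e_2|=r-1$ supplied by the definition automatically satisfy $e_1\setminus e_2=\{u\}$ and $e_2\setminus e_1=\{u'\}$ (both edges meet each $U_j$ exactly once and $u,u'$ share a part), so $u\sim u'$ in a single step. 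Hence the $\sim$-classes are exactly $U_1,\ldots,U_r$, every part of any valid partition is a union of classes, and uniqueness follows by counting. This is essentially the paper's proof, which fixes an edge $e$, labels an arbitrary partition $U_1'\cup\cdots\cup U_r'$ so that the vertices of $e$ match, and uses the swap pairs to conclude $U_i\subseteq U_i'$ for all $i$.

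By contrast, the route you propose to rescue (ii) without the witnessing partition --- the auxiliary claim that any two edges of a tightly connected $F$ are joined by a chain of edges with consecutive intersections of size $r-1$ --- would fail: that claim is false. Take $r=3$, parts $\{a_1,a_2\}$, $\{b_1,b_2\}$, $\{c_1,c_2,c_3\}$, and edges $a_1b_1c_1$, $a_1b_2c_2$, $a_2b_2c_2$, $a_2b_1c_2$, $a_2b_2c_1$, $a_2b_2c_3$. Every same-part pair has the required swap pair (for $(a_1,a_2)$ use $a_1b_2c_2,a_2b_2c_2$; for $(b_1,b_2)$ use $a_2b_1c_2,a_2b_2c_2$; for the pairs in the third part use $a_2b_2c_1,a_2b_2c_2,a_2b_2c_3$), so $F$ is tightly connected, yet the edge $a_1b_1c_1$ meets every other edge in at most one vertex, so it is isolated in the ``tight adjacency'' graph and cannot be reached from any other edge by such a chain. (The conclusion you wanted from the chain argument is still true in this example --- every vertex is $\sim$-equivalent to a vertex of a fixed edge --- but only because of the witnessing partition, not because of edge-level connectivity.) Likewise your sketch for (i), which argues via ``$F$ has no valid $r$-partition at all'' and a proper vertex coloring, is circular as written. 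None of this machinery is needed: with the witnessing partition in hand, the argument in the previous paragraph closes your proof and coincides with the paper's.
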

	\begin{proof}
		By definition $F$ has an $r$-partition $U_1\cup \cdots \cup U_r$ such that for any distinct $u_1,u_2\in U_i$ there exist edges $e_1,e_2$ with $u_j\in e_j$ and $|e_1\cap e_2|=r-1$.  Let $e=\{x_1,\ldots,x_r\}\in E(F)$ with $x_i\in U_i$ for all $i$ and let $U'_1\cup \cdots \cup U'_r$ be any other $r$-partition of $F$ with $x_i\in U'_i$ for all $i$.  Note that if $y_i\in U_i\sm \{x_i\}$, then there is an edge $e'\ni y_i$ with $|e\cap e'|=r-1$.  Thus for $e'$ to contain exactly one vertex from each $U'_j$ set we must have $y_i\in U'_i$.  This implies that $U_i\sub U'_i$ for all $i$, and hence $U'_i=U_i$ for all $i$.  We conclude that every $r$-partition of $F$ is a relabeling of the partition $U_1\cup \cdots \cup U_r$.
	\end{proof}
	
	\textbf{Proof of Theorem~\ref{thm:homGen}}.  Let $F$ be tightly connected  with $r$-partition $U_1\cup \cdots \cup U_r$.  We claim that if $\chi:V(F)\to V(F')$ is a local isomorphism, then either $F'\cong F$ or $F'$ is not $r$-partite.  If $\chi$ is injective then $F'\cong F$, so assume $\chi(u_1)=\chi(u_2)$ for some $u_1\ne u_2$ and that $F'$ has some $r$-partition $U'_1\cup \cdots \cup U'_r$.  Observe that $\chi$ being a homomorphism implies $\chi(U'_1)^{-1}\cup \cdots \cup \chi(U'_r)^{-1}$ is an $r$-partition of $F$, and by Lemma~\ref{lem:label} we can relabel parts so that $\chi(U'_i)^{-1}=U_i$.  Assume $\chi(u_1)=\chi(u_2)\in U'_i$, meaning $u_1,u_2\in U_i$.  Then $F$ being tightly connected means there exist edges $e_1,e_2\in E(F)$ with $|e_1\cap e_2|=r-1$ and $u_i\in e_i$. Thus $\chi(e_1)=\chi(e_2)$, contradicting $\chi$ being a local isomorphism.
	
	We conclude that for tightly connected $F$ that $\HI{F}$ consists of $F$ together with some $r$-graphs which are not $r$-partite, so if $\c{F}$ is a family of tightly connected $r$-graphs then $\HI{\c{F}}$ consists of $\c{F}$ together with some $r$-graphs which are not $r$-partite.  Thus given any extremal $\c{F}$-free $r$-graph $H$ on $n$ vertices, we can consider a maximum $r$-partite subgraph of $H$ which has at least $r^{-r}e(H)$ edges and which is $\HI{\c{F}}$-free.  This implies $\ex(n,\HI{\c{F}})\ge r^{-r}\ex(n,\c{F})$ for all such $\c{F}$, and the result follows from Lemma~\ref{lem:homGen}. \qed
	
	\section{Hosts with High Codegrees : Proofs of Theorems \ref{thm:codeg} and \ref{thm:tightTuran}}\label{sec:codeg}
	
	For the proofs of Theorems  \ref{thm:codeg} and \ref{thm:tightTuran}, the central idea will be to find an $r$-partite subgraph $H'\sub H$ on $V_1\cup \cdots \cup V_r$ such that the $k$-graph induced by $V_1\cup \cdots \cup V_k$ avoids certain $k$-graphs associated to $\c{F}$.  For Theorem~\ref{thm:tightTuran} we use $k=2$, and for Theorem~\ref{thm:codeg} we use $k=r-1$.

	{\bf Proof of  Theorem~\ref{thm:tightTuran}.} Recall $TC_{k}^r$ is defined on $\{u_0,\ldots,u_{k - 1}\}$ with edges $\{u_i,u_{i+1},\ldots,u_{i+r-1}\}$. In order to later prove Theorem~\ref{thm:tightGen}, it will be convenient to prove the slightly more technical result
	\begin{equation}\label{eq:tight}
	\ex(n,\HI{TC_{r+1}^{r},\ldots,TC_{\ell r}^{r}})=\Om(n^{r-1+1/\ell})\tr{ for }r\ge 2,\ \ell\in \{2,3,5\},
	\end{equation}
	which implies Theorem~\ref{thm:tightTuran} since $\c{F}\sub \HI{\c{F}}$ for all $\c{F}$.
	
	As noted in the introduction, for $\ell\in \{2,3,5\}$ it is known \cite{Benson} that there exists a bipartite graph $G$ on $V_1\cup V_2$ with $|V_i|=n/r$ which is $\{C_3,\ldots,C_{2\ell}\}$-free and such that $e(G)=\Om(n^{1+1/\ell})$.  Define the $r$-graph $H$ on $V_1\cup \cdots \cup V_r$ with $|V_i|=n/r$ by including all edges $e$ with $e\cap (V_1\cup V_2)\in E(G)$.  Then $e(H)=(n/r)^{r-2}\cdot e(G)=\Om(n^{r-1+1/\ell})$, so it suffices to prove that $H$ is $\HI{TC_{r+1},\ldots,TC_{\ell r}^r}$-free.
	
	Assume there exists $F\sub H$  such that there exists a local isomorphism $\chi:V(TC_{k}^r)\to V(F)$ for some $k\le \ell r$.   Since $H$ is $r$-partite, $F$ has an $r$-partition $U_1\cup \cdots \cup U_r$ given by $U_i=V_i\cap V(F)$.  Because $\chi$ is a homomorphism, $\chi^{-1}(U_1)\cup \cdots \cup \chi^{-1}(U_r)$ is an $r$-partition of $TC_{k}^r$, and in particular we must have $k=\ell'r$ for some $\ell'\le \ell$.  Further, because $TC_{\ell' r}^r$ has a unique $r$-partition up to relabeling its parts for all $\ell'$, we can assume without loss of generality that $W_i:=\chi^{-1}(U_i)$ consists of all the $u_j$ vertices with $j\equiv i \mod r$.
	
	Observe that the graph induced by $W_1\cup W_2$ in $TC_{\ell' r}^r$ is a $C_{2\ell'}$ on $u_1u_2u_{r+1}u_{r+2}\cdots u_{(\ell' -1)r+2}$.  Further note that the restricted map $\chi:W_1\cup W_2\to U_1\cup U_2$ is a local isomorphism of $C_{2\ell'}$, as any two edges of this $C_{2\ell '}$ which intersect in 1 vertex are contained in two edges of $TC_{\ell' r}^r$ which intersect in $r-1$ vertices.  Thus the graph induced by $U_1\cup U_2=V(F)\cap (V_1\cup V_2)$ in $F$ is a local isomorphism of $C_{2\ell'}$, and in particular this graph must contain some cycle of length $C_{k'}$ with $k'\le 2\ell'$ as a subgraph.  But the graph induced by $V(F)\cap (V_1\cup V_2)$ is a subgraph of $G$ (since $G$ is the graph induced by $V_1\cup V_2$), which is $C_{k'}$-free by construction, a contradiction.  We conclude that $H$ contains no element of $\HI{TC_{k}^r}$ for $k\le \ell r$, proving \eqref{eq:tight}. \qed
	
	\medskip

	{\bf Proof of Theorem \ref{thm:codeg}.} Assume $H$ has $r$-partition $V_1\cup \cdots \cup V_r$ and let $E_{i}\sub E(H)$ denote the set of edges $e=\{v_1,\ldots,v_r\}$ with $v_j\in V_j$ for all $j$ such that $e\sm \{v_i\}$ has $(r-1)$-degree at least $D$.  By hypothesis we have \[\half e(H)\le \l|\bigcup E_{i}\r|\le \sum |E_{i}|,\] so we can assume without loss of generality that $|E_r|\ge (2r)^{-1}e(H)$.  Let $G_k$ denote the $(r-1)$-graph on $V_1\cup \cdots\cup V_{r-1}$ with \[E(G_k)=\{\{v_1,\ldots,v_{r-1}\}:2^k D\le d_H(v_1,\ldots,v_{r-1})<2^{k+1}D\}.\] That is, the $G_k$ roughly partition the edges of $E_r$ into subgraphs which are codegree regular.  We say that the edge $e\in E_{r}$ {\em corresponds to} the edge $e'\in E(G_k)$ if $e'\sub e$.
	
	Note that the $(r-1)$-degree of any set of $r-1$ vertices in $H$ is at most $\Del$, so we only have to consider $G_k$ with  $k=O(\log \Del)$.  As each edge in $E_{r}$ corresponds to an edge in exactly one $G_k$, by the pigeonhole principle there exists some
	$K$ in this range such that at least $\Om((\log \Del)^{-1})e(H)$ edges of $E_{r}$ correspond to edges in $G_K$.  Because each edge of $G_K$ is corresponded to by at most $2^{K+1}D$ edges of $E_{r}$, we have $e(G_K)=\Om((2^{K} D\log\Del)^{-1})e(H)$.  Because each edge of $G_K$ is corresponded to by at least $2^K D$ edges of $E_{r}$, the maximum degree of $G_K$ is at most $O((2^{K}D)^{-1}\Del)$.  Let $\tilde{\Del}$ be three times the maximum degree of $G_K$ (so that $\tilde{\Del}\ge 3$) and let $G\sub G_K$ be an $\boldsymbol{\pi}(\c{F})$-free subgraph of $G_K$ with as many edges as possible.  By the hypothesis of the theorem, we have
	\begin{align*}e(G)&=\ex(G_K,\boldsymbol{\pi}(\c{F}))=\Om( \tilde{\Del}^{-\gam}\cdot (\log \tilde{\Del})^{3-r}) \cdot e(G_K)\\&=\Om( 2^{\gam K}(\Del/D)^{-\gam}\cdot (\log \Del)^{3-r} \cdot (2^{K} D\log\Del)^{-1})\cdot e(H)\\&= 2^{-K}D^{-1}\cdot \Om(2^{\gam K}(\Del/D)^{-\gam}(\log \Del)^{2-r})\cdot  e(H).
	\end{align*}
	
	Define $H'\sub H$ to have the edges which correspond to edges of $G$.  As each edge in $G\sub G_K$ is corresponded to by at least $2^KD$ edges, $e(H')\ge 2^K D e(G)$.  This is sufficiently large to prove the result, so it will be enough to show that $H'$ is  $\c{F}$-free.  Indeed, assume $H'$ contained some $F'\cong F\in \c{F}$ with $r$-partition $U_1\cup \cdots \cup U_r$ given by $U_i=V(F')\cap V_i$.  In particular $G$, the $(r-1)$-graph of $H'$ induced by parts $V_1\cup \cdots \cup V_{r-1}$, contains the $(r-1)$-graph of $F'$ induced by $U_1\cup \cdots \cup U_{r-1}$ as a subgraph, a contradiction to $G$ being $\boldsymbol{\pi}(F)\sub \boldsymbol{\pi}(\c{F})$-free. \qed
	
	We note that one can replace the $\log \Del$ terms in Theorem~\ref{thm:codeg} with $\log\log \Del$ using a slightly more refined argument, namely by partitioning the edge set by $2^{(1+\gam)^k} D\le d_H(v_1,\ldots,v_{r-1})<2^{(1+\gam)^{k+1}}D$ with $\gam$ as in the theorem statement.
	
	\section{Proofs of Theorems~\ref{thm:K2} and~\ref{thm:tightGen}}\label{sec:proofs}
	
	\subsection{Proof of Theorem \ref{thm:tightGen}}
	
	Let $\c{F}_{\ell,r}=\{TC_{r+1}^r,\ldots,TC_{\ell r}^r\}$.  For $\ell\in \{2,3,5\}$, we prove by induction on $r$ the lower bound
	\[ \ex(H,\c{F}_{\ell,r})=\Om(\Del^{\f{-\ell+1}{\ell(r-1)}}(\log \Del)^{2-r})e(H).\] The case $r = 2$ was established in~\cite{FKP} (and it can also be proven by Lemma~\ref{lem:homGen}), so we assume the result holds for $(r - 1)$-graphs. Let $H$ be an $r$-graph of maximum degree $\Del$ and let $H'\sub H$ be an $r$-partite subgraph with at least $r^{-r} e(H)$ edges.  Let $D:=\Del^{1/(r - 1)}$, and let $H''\sub H'$ contain all the edges which do not contain an $(r-1)$-set with $(r-1)$-degree at least $D$.  If $e(H'')\ge \half e(H')$, then because $H''$ has maximum $(r-1)$-degree at most $D$, we can apply Lemma~\ref{lem:homGen} to $H''$ to conclude by \eqref{eq:tight} that
	\begin{equation*}
	\ex(H,\c{F}_{\ell,r})= \Om( D^{-1+ \frac{1}{\ell}})\cdot \half r^{-r}e(H)= \Om(\Del^{\f{-\ell+1}{\ell(r-1)}}) \cdot e(H),
	\end{equation*}
	giving the desired bound.
	
	Thus we can assume that at least half the edges of $H'$ contain an $(r-1)$-set with $(r-1)$-degree at least $D$.  Let $\c{F}_{\ell,r}'\sub \c{F}_{\ell,r}$ by all the tight cycles of the form $TC_{\ell'r}^r$, and note that $\c{F}_{\ell,r}\sm \c{F}_{\ell,r}'$ consists of $r$-graphs which are not $r$-partite, so $H'$ automatically avoids these.  Because $TC_{\ell' r}^r$ has a unique $r$-partition up to relabeling its parts, it is straightforward to check $\boldsymbol{\pi}(TC_{\ell' r}^r)=\{TC_{\ell'(r - 1)}^{r-1}\}$, and thus $\boldsymbol{\pi}(\c{F}'_{\ell,r})=\c{F}'_{\ell,r-1}\sub \c{F}_{\ell,r-1}$.  By  Theorem~\ref{thm:codeg} and the inductive hypothesis we conclude \begin{align*}
	\ex(H',\c{F}_{\ell,r})&=\ex(H',\c{F}_{\ell,r}')=\Om(\Del^{\f{-\ell+1}{\ell(r-2)}}D^{\f{\ell-1}{\ell(r-2)}}(\log \Del)^{2-r})\cdot r^{-r}e(H)\\&=\Om( \Del^{\f{-\ell+1}{\ell(r-1)}}(\log \Del)^{2-r}) \cdot e(H).
	\end{align*}
	\qed

	\subsection{Proof of Theorem \ref{thm:K2}: Lower Bound}\label{sec:low}
	
	Let $2\le s_1\le \cdots \le s_r$ be such that $\ex(n,K_{s_1,\ldots,s_i})=\Om(n^{i-1/a_i})$ for all $2\le i\le r$, and it is known that this holds provided $s_{i}\ge f_i(s_{i-1})$ for some suitable function $f_i$ \cite{MYZ,PZ}.  We prove $\ex(H,K_{s_1,\ldots,s_r})=\Om(\Del^{-1/\sum_{i=2}^r a_i}(\log \Del)^{2-r})$ by induction on $r$.  The case $r=2$ comes from Theorem~\ref{thm:homGen} and the assumption $\ex(n,K_{s_1,s_2})=\Om(n^{2-1/a_2})$. We proceed to $r > 2$.
	
	Let $H'\sub H$ be an $r$-partite subgraph with at least $r^{-r} e(H)$ edges.  Let $D:=\Del^{a_r/\sum_{i=2}^r a_i}$, and let $H''\sub H'$ contain all the edges which do not contain an $(r-1)$-set with $(r-1)$-degree at least $D$.  If $e(H'')\ge \half e(H')$, then because $H''$ has maximum $(r-1)$-degree at most $D$, we can apply Theorem~\ref{thm:homGen} and the hypothesis $\ex(n,K_{s_1,\ldots,s_r})=\Om(n^{r-1/a_r})$ to $H''$ and conclude \begin{equation*}\ex(H,K_{s_1,\ldots,s_r})=\Om( D^{-1/a_r})e(H)=\Om\l( \Del^{\f{-1}{\sum_{i=2}^r a_i}}\r)e(H),\end{equation*}
	giving the desired bound.
	
	Thus we can assume that at least half the edges of $H'$ contain an $(r-1)$-set with $(r-1)$-degree at least $D$.  Because $\boldsymbol{\pi}(K_{s_1,\ldots,s_r})$ is a set of $(r-1)$-graphs which all contain $K_{s_1,\ldots,s_{r-1}}$ as a subgraph, we can use Theorem~\ref{thm:codeg} and the inductive hypothesis to conclude \begin{align*}
	\ex(H',K_{s_1,\ldots,s_r})&=\Om\l( \Bigl(\frac{\Del}{D}\Bigr)^{\f{-1}{\sum_{i=2}^{r-1} a_i}}(\log \Del)^{2-r}\r)e(H)\\&= \Om\l(\Del^\f{-1}{\sum_{i=2}^r a_i}(\log \Del)^{2-r}\r)e(H),
	\end{align*}
	where the equality $(\Del/D)^{-1/\sum_{i=2}^{r-1} a_i}=\Del^{-1/\sum_{i=2}^{r} a_i}$ used
	\[\log_{\Del}\Bigl(\frac{\Del}{D}\Bigr)=1-\f{a_r}{\sum_{i=2}^ra_i}=\f{\sum_{i=2}^{r-1} a_i}{\sum_{i=2}^ra_i}.\]
	This proves the desired lower bound. \qed
	
	\subsection{Supersaturation for complete $r$-partite $r$-graphs}
	
	A closer inspection of the proof of Theorem~\ref{thm:K2}'s lower bound shows that if this bound were to be sharp, then the corresponding construction must have essentially all of its edges containing a $k$-set with $k$-degree $\Del^{a_{k+1}/(\sum_{i=2}^{k+1}a_i)}$.  To prove Theorem \ref{thm:K2}'s upper bound,
	we use the following supersaturation result for a certain unbalanced $r$-graph which has $k$-degrees of this form.
	
	\begin{lem}\label{lem:construct}
		Let $2\le s_1\le \cdots \le s_r$ and define $a_i=\prod_{j<i}s_j$ for $2\le i\le r+1$.  Let $H$ be a complete $r$-partite $r$-graph on $V_1\cup \cdots \cup V_r$ with $n_i:=|V_i|$ defined by $n_1=n^{a_2}$ and $n_i=n^{a_i}$ otherwise.  There exists a constant $\al_r\ge 1$ such that if $H'\sub H$ has $m\ge \al_r n^{-1}\prod_{i= 1}^r n_i $ edges, then $H'$ contains at least $\al_r^{-1} m^{a_{r+1}} \prod_{i=1}^r n_i^{s_i-a_{r+1}}$ copies of $K_{s_1,\ldots,s_r}$.
	\end{lem}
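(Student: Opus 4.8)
\textbf{Proof proposal for Lemma~\ref{lem:construct}.}
The plan is to induct on $r$, peeling off the last part $V_r$ through the links of its vertices, in the spirit of the K\H{o}v\'{a}ri--S\'{o}s--Tur\'{a}n counting argument. The statement is a ``critical density'' supersaturation estimate: there are $\prod_{i=1}^r\binom{n_i}{s_i}\asymp\prod_{i=1}^r n_i^{s_i}$ potential copies of $K_{s_1,\ldots,s_r}$ in $H$, each built from $a_{r+1}=s_1s_2\cdots s_r$ edges, so if $\rho:=m/e(H)=m/\prod_i n_i$ is the density of $H'$ then the claimed bound $\al_r^{-1}m^{a_{r+1}}\prod_i n_i^{s_i-a_{r+1}}=\al_r^{-1}\rho^{a_{r+1}}\prod_i n_i^{s_i}$ is exactly the ``expected'' number of copies. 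The hypothesis $m\ge\al_r n^{-1}\prod_i n_i$ says $\rho\ge\al_r/n$, and together with the choice $n_i=n^{a_i}$ this is precisely the threshold at which the expectation heuristic survives a convexity argument.

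For the inductive step ($r\ge 2$), fix $v\in V_r$ and pass to its link $H'_v$, an $(r-1)$-partite $(r-1)$-graph on $V_1\cup\cdots\cup V_{r-1}$ sitting inside the complete $(r-1)$-partite host on those parts; since the $a_i$ depend only on $s_1,\ldots,s_{i-1}$, the part sizes $n_1=n_2=n^{a_2}$, $n_i=n^{a_i}$ are exactly those the $(r-1)$-case of the lemma wants with the \emph{same} value of $n$, so the inductive hypothesis applies to $H'_v$ whenever $e(H'_v)\ge\al_{r-1}n^{-1}\prod_{i<r}n_i$. One first discards the light $v\in V_r$ (those below this threshold): they carry at most $n_r\cdot\al_{r-1}n^{-1}\prod_{i<r}n_i=\al_{r-1}n^{-1}\prod_{i\le r}n_i\le(\al_{r-1}/\al_r)m\le m/2$ edges once $\al_r\ge 2\al_{r-1}$, so the heavy vertices $V_r'$ still account for $\sum_{v\in V_r'}e(H'_v)\ge m/2$. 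Then count the copies of $K_{s_1,\ldots,s_r}$ in $H'$ as $\sum_C\binom{d(C)}{s_r}$, where $C$ ranges over the $\prod_{i<r}\binom{n_i}{s_i}$ potential $(r-1)$-partite copies and $d(C)$ is the number of $v\in V_r$ whose link contains $C$. Two convexity steps finish it: summing the inductive hypothesis over $v\in V_r'$ and using convexity of $x\mapsto x^{a_r}$ (with $|V_r'|\le n_r$) bounds $\sum_C d(C)$ from below by $\al_{r-1}^{-1}n_r^{1-a_r}(m/2)^{a_r}\prod_{i<r}n_i^{s_i-a_r}$; dividing by $\prod_{i<r}\binom{n_i}{s_i}$ and using convexity of $x\mapsto\binom{x}{s_r}$ gives $\sum_C\binom{d(C)}{s_r}\ge\prod_{i<r}\binom{n_i}{s_i}\binom{\bar d}{s_r}$ with $\bar d=\Om(n_r\rho^{a_r})$, which exceeds $2s_r$ for $\al_r$ large because $n_r\rho^{a_r}\ge\al_r^{a_r}$ (here $n_r=n^{a_r}$, $\rho\ge\al_r/n$). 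Hence $\binom{\bar d}{s_r}=\Om(\bar d^{\,s_r})$, and plugging in together with $\binom{n_i}{s_i}\ge n_i^{s_i}/(2s_i!)$ and the identity $a_{r+1}=s_r a_r$ makes the powers of $m$, of $n_r$, and of the $n_i$ with $i<r$ assemble precisely into $m^{a_{r+1}}\prod_{i\le r}n_i^{s_i-a_{r+1}}$, with a leading constant depending only on $\al_{r-1}$ and $s_1,\ldots,s_r$. Choosing $\al_r$ to dominate that constant closes the induction; the base case $r=2$ is the classical K\H{o}v\'{a}ri--S\'{o}s--Tur\'{a}n supersaturation for $K_{s_1,s_2}$, proved by the same two convexity steps (equivalently, by running the step above with the trivial bound $\binom{a}{b}\ge a^b/(2b!)$ in place of the inductive hypothesis).

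I expect the real content to be bookkeeping rather than a genuine obstacle. The point requiring the most care is producing a single value of $\al_r$ --- a suitable function of $\al_{r-1}$ and $s_1,\ldots,s_r$, morally $\al_r\approx\al_{r-1}^{s_r}$ times a constant depending on the $s_i$ --- that simultaneously keeps the edge loss to light vertices below $m/2$ and forces \emph{every} average appearing in a Jensen step to exceed $2s_i$, so each $\binom{\mathrm{avg}}{s_i}$ may be replaced by a constant multiple of $(\mathrm{avg})^{s_i}$; one must then verify the exponents combine, which is purely the relation $a_{i+1}=s_i a_i$ applied with $n_i=n^{a_i}$, and this is also exactly where the threshold $\rho\ge\al_r/n$ is seen to be critical (a smaller $\rho$ makes $\bar d=o(1)$ and kills the last step). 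The only slightly delicate point of principle is the convexity inequality $\sum_j\binom{x_j}{s}\ge N\binom{\overline x}{s}$ for the appropriate extension of $\binom{\cdot}{s}$ to nonnegative reals, which is standard and used twice in the step above.
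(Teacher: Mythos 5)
Your proposal is correct and follows essentially the same route as the paper: induction on $r$, counting copies via $\sum_{C}\binom{d(C)}{s_r}$ over $(r-1)$-partite copies in the links of vertices of $V_r$, with two Jensen/convexity steps and the threshold $m\ge \al_r n^{-1}\prod_i n_i$ together with $n_i=n^{a_i}$ ensuring the relevant averages exceed $2s_r$. The only (immaterial) difference is how low-degree vertices of $V_r$ are handled: you discard them outright using $\al_r\ge 2\al_{r-1}$, whereas the paper keeps all vertices and subtracts $h(m')$ from the inductive count before applying convexity of $h$.
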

	Strictly speaking, to prove Theorem~\ref{thm:K2} we only need that there exists at least one copy of $K_{s_1,\ldots,s_r}$ when $H$ contains at least this many edges, but it is easier to prove Lemma~\ref{lem:construct} by induction on $r$ if we add this stronger conclusion.
	\begin{proof}
		We prove this by induction on $r$.  For $r=2$, we use the following supersaturation result of Erd\H{o}s and Simonovits \cite{ES}: for $s_1\le s_2$, there exist constants $\al,\al'$ such that if $G\sub K_{N,N}$ has  $e(G)=m\ge \al' N^{2-1/s_1}$, then $G$ contains at least $\al^{-1} m^{s_1s_2}N^{s_1+s_2-2s_1s_2}$ copies of $K_{s_1,s_2}$.  Substituting $N=n_1=n_2=n^{a_2}$ and taking $\al_2=\max\{\al',\al,1\}$ gives the result.
		
		Assume the result holds up to but not including $r$.  Let $\c{Q}_i$ denote the set of all subsets of $V_i$ of size $s_i$ and let $\c{Q}=\c{Q}_1\times \cdots \times\c{Q}_{r-1}$. Let $\c{P}$ denote the set of pairs $(Q,v)$ with $Q\in \c{Q}$ and $v\in V_r$ such that $\{v_1,\ldots,v_{r-1},v\}\in E(H)$ for all $v_i\in Q_i$.  That is, the vertices of $Q$ induce a $K_{s_1,\ldots,s_{r-1}}$ in the link graph of $v$.  For each $Q\in \c{Q}$, define $g_Q$ to be the number of pairs in $\c{P}$ using the set $Q$, and similarly define $f_v$ for $v\in V_r$ to be the number of pairs in $\c{P}$ using $v$.  Observe that the number of $K_{s_1,\ldots,s_r}$'s in $H'$ is at least $\sum_{Q\in\c{Q}} {g_Q\choose s_r}$, so it will be enough to lower bound this sum.
		
		For non-negative $x$ define ${x\choose s_r}=0$ if $x\le s_r-1$ and ${x\choose s_r}=\f{x(x-1)\cdots(x-k+1)}{s_r!}$ otherwise.  This makes ${x\choose s_r}$ a convex function, so we have {\small \begin{align}\sum_{Q\in \c{Q}}{g_Q\choose s_r}&\ge |\c{Q}|{|\c{Q}|^{-1}\sum_{Q\in \c{Q}} g_Q\choose s_r}=|\c{Q}|{|\c{Q}|^{-1}\sum_{v\in V_r} f_v\choose s_r}\nonumber\\&\ge \f{|\c{Q}|}{s_r!}\l(|\c{Q}|^{-1}\sum_{v\in V_r} f_v-s_r\r)^{s_r}.\label{eq:P1}\end{align}  }
		
		Assume $m\ge \al_r n^{-1}\prod_{i\le r} n_i$ where  \begin{equation}\label{eq:al} \al_r:=4^{s_r}\al_{r-1}^{s_r}s_r!\ge \max\{ 2^{1/a_r}\al_{r-1},\ (4s_r \al_{r-1})^{1/a_{r}}\}.\end{equation}
		For non-negative $x$ define \[h(x)=\al_{r-1}^{-1} x^{a_r}\prod_{i<r} n_i^{s_i-a_r}.\]  Let $d_v$ denote the degree of $v$ in $H'$, and define $m'=\al_{r-1}n^{-1}\prod_{i<r}n_i$.  By the inductive hypothesis, the number of copies of $K_{s_1,\ldots,s_{r-1}}$ in the link graph of $v\in V_r$ will be at least $h(d_v)$ whenever $d_v\ge m'$, and since $h(x)$ is an increasing function we have \begin{equation*}f_v\ge h(d_v)-h(m').\end{equation*}  From \eqref{eq:al} we have \[m/n_r\ge 2^{1/a_r} m'.\]
		Using these two observations and the convexity of $h$, we find
		\begin{equation}\label{eq:P2}\sum_{v\in V_r} f_v\ge \sum_{v\in V_r}(h(d_v)-h(m'))\ge n_r\cdot  (h(m/n_r)-h(m'))\ge \half n_r\cdot h(m/n_r).\end{equation}
		
		Since $|\c{Q}|\le \prod_{i<r} n_i^{s_i}$ and $n_r=n^{a_r}$ for $r\ge 3$, we have by \eqref{eq:al} \[\half n_r\cdot h(m/n_r)|\c{Q}|^{-1}\ge \half n^{a_r}\cdot h\l(\al_r n^{-1}\prod_{i<r}n_i\r)\prod_{i<r} n_i^{-s_i}=\half \al_{r-1}^{-1} \al_r^{a_{r}}\ge 2s_r.\]
		
		Combining this with \eqref{eq:P2} gives  \begin{align}|\c{Q}|^{-1}\sum_{v\in V_r} f_v-s_r&\ge\half n_r\cdot h(m/n_r)|\c{Q}|^{-1}-s_r\nonumber\\&\ge \quart n_r \cdot h(m/n_r)\cdot |\c{Q}|^{-1+1/s_r}|\c{Q}|^{-1/s_r}\nonumber
		\\&\ge \quart n_r \cdot \al_{r-1}^{-1} m^{a_r}n_r^{-a_r}\prod_{i<r} n_i^{s_i-a_r}\cdot \prod_{i<r} n_i^{-s_i+s_i/s_r}|\c{Q}|^{-1/s_r}\nonumber
		\\&= (4\al_{r-1})^{-1} m^{a_r} \prod_{i=1}^rn_i^{s_i/s_r-a_r} |\c{Q}|^{-1/s_r},\label{eq:P3}\end{align}
		where this last inequality again used $|\c{Q}|\le \prod n_i^{s_i}$ and that $-1+1/s_r\le 0$ since $s_r\ge 1$.
		
		Plugging \eqref{eq:P3} into \eqref{eq:P1} gives
		\begin{align*}
		\sum_{Q\in\c{Q}}{g_Q\choose s_r}&\ge \f{|\c{Q}|}{s_r!}\cdot (4\al_{r-1})^{-s_r} m^{a_{r+1}} \prod_{i=1}^r n_i^{s_i-a_{r+1}}|\c{Q}|^{-1}=\al_r^{-1} m^{a_{r+1}} \prod_{i=1}^r n_i^{s_i-a_{r+1}}.
		\end{align*}
		Thus $H'$ contains at least this many copies of $K_{s_1,\ldots,s_r}$ as desired.
	\end{proof}

	\subsection{Proof of Theorem \ref{thm:K2}: Upper Bound}\label{sec:constructions}
	
	We define the $r$-graph $\Con{a_2,\ldots,a_r}$ as follows.  $\Con{a_2}$ is the complete balanced bipartite graph with parts of size $n^{a_2}$.  If $\Con{a_2,\ldots,a_{r-1}}$ has been defined, then we construct $\Con{a_2,\ldots,a_r}$ as follows. Take a set $\{H_1,H_2,\ldots,\}$ containing $n^{a_r-a_{r-1}}$ disjoint copies of $\Con{a_2,\ldots,a_{r-1}}$ and let $V_r$ consist of $n^{a_r}$ new vertices.  Define $\Con{a_2,\ldots,a_r}$ to have vertex set $V$ and edge set $E$, where \[V=V_r\cup \bigcup V(H_i),\]\[ E=\{ e\cup\{v\}:e\in  \bigcup E(H_i), v\in V_r\}.\]
	
	Let us examine some basic properties of $H_r:=\Con{a_2,\ldots,a_r}$.  By construction we see that $e(H_r)=n^{a_r}\cdot n^{a_r-a_{r-1}} \cdot e(\Con{a_2,\ldots,a_{r-1}})$, and thus inductively one can prove that $e(H_r)=n^{2a_r+a_{r-1}+\cdots+a_2}$ for $r\ge 3$.  Similarly for $r\ge 2$ each vertex in $V_r$ has degree $n^{a_r+\cdots+a_2}:=\Del$, and it is easily seen that $H_r$ is $\Del$-regular.  To prove the upper bound of Theorem \ref{thm:K2}, it suffices to show \[\ex(H_r,K_{s_1,\ldots,s_r})=O(\Del^{-1/\sum_{i=2}^ra_i})e(H_r)=O(n^{2a_r+a_{r-1}+\cdots+a_2-1}).\]
	
	Let $\al_r$ be as in Lemma~\ref{lem:construct}, and let $H'_r\sub H_r$ be a subgraph with $e(H'_r)\ge \al_r n^{2a_r+a_{r-1}+\cdots+a_2-1}$ which contains no $K_{s_1,s_2,\ldots,s_r}$.  By the pigeonhole principle, one of the copies of $\Con{a_2,\ldots,a_{r-1}}$ making up $H_r$ is involved with at least $ n^{a_{r-1}-a_r}e(H'_r)$ edges of $H'_r$.  Let $H_{r-1}$ denote such a copy, and let $H'_{r-1}$ be $H'_r$ after deleting every copy of $\Con{a_2,\ldots,a_{r-1}}$ that is not $H_{r-1}$.  Now again in $H'_{r-1}$ there exists some copy of $\Con{a_2,\ldots,a_{r-2}}$ involved with at least $n^{a_{r-2}-a_{r-1}}e(H'_{r-1})$ edges of $H'_{r-1}$. Call this copy $H_{r-2}$, and let $H'_{r-2}$ be $H'_{r-1}$ after deleting every copy that is not $H_{r-2}$.  Continue this way until one arrives at $H'_{2}$.
	
	Observe that $H'_2$ is $r$-partite with the $i$th part having size $n_i:=n^{a_i}$ for $i\ge 2$ and $n_1=n^{a_2}$.  Because $e(H'_{i-1})\ge e(H'_{i})n^{a_{i-1}-a_{i}}$ for all $i$, we have \[e(H'_{2})\ge e(H'_r) n^{a_2-a_r}\ge \al_r n^{a_r+\cdots+a_3+2a_2-1}=\al_r n^{-1}\prod_{i\le r} n_i.\]  Then
	$H'_2$ contains a $K_{s_1,\ldots,s_r}$ by Lemma~\ref{lem:construct}, contradicting the assumption that $H_r'$ is $K_{s_1,\ldots,s_r}$-free.  Thus there exists no such subgraph of $H_r$ with this many edges, giving the desired result. \qed

	\section{Random Hosts}\label{sec:rand}
	
	Throughout this section we fix integers $2\le s_1\le \cdots \le s_r$. Recall that $a_r=\prod_{i<r}s_i,\ a_{r+1}=\prod_{i\le r} s_i$, and
	\[\be_1=\f{\sum_{i=1}^r s_i-r}{a_{r+1}-1} \quad \quad \mbox{ and } \quad \quad \ \be_2=\f{a_r(\sum_{i=1}^{r-1}s_i-r)+1}{(a_r-1)(a_{r+1}-1)}.\]
	
	\subsection{Proof of Theorem~\ref{thm:randK} : Lower Bounds}

	An ingredient of the proof of Theorem \ref{thm:randK} is the following version of Azuma's inequality, a proof of which can be found in \cite{ProbMeth}, for example.
	
	\begin{lem}\label{lem:azuma}
		Let $X_1,X_2,\dots,X_N$ be independent Bernoulli random variables and let $f : \{0,1\}^N \rightarrow \mathbb R$ be a function
		such that $|f(x) - f(y)| \leq 1$ whenever $x$
		and $y$ differ only on the $i$th co-ordinate. Then with $Z =
		f(X_1,X_2,\dots,X_N)$, we have for all $\lam>0$
		\begin{equation*}
		\mathbb P(|Z - \mathbb E(Z)| > \lambda)  \leq  2\mbox{e}^{-2\lambda^2/N}.
		\end{equation*}
	\end{lem}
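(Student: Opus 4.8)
The plan is to prove this via the standard Doob martingale argument, i.e.\ the bounded-differences (McDiarmid/Azuma--Hoeffding) method. Set $Z = f(X_1,\dots,X_N)$ and expose the coordinates one at a time: let $Z_i = \E[Z \mid X_1,\dots,X_i]$ for $0 \le i \le N$, so that $Z_0 = \E[Z]$, $Z_N = Z$, and $(Z_i)_{i=0}^N$ is a martingale with respect to the filtration generated by $X_1,\dots,X_i$. Writing $D_i = Z_i - Z_{i-1}$ for the martingale increments, we have $Z - \E[Z] = \sum_{i=1}^N D_i$, and the goal becomes a concentration bound for this sum of bounded-range martingale differences.

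The key step is to show that, conditioned on $X_1,\dots,X_{i-1}$, the increment $D_i$ is supported on an interval of length at most $1$. For $x \in \{0,1\}$ put $g_i(x) = \E[Z \mid X_1,\dots,X_{i-1},X_i=x]$. Because the $X_j$ are independent, passing from $X_i = 0$ to $X_i = 1$ while keeping a common realization of $X_{i+1},\dots,X_N$ changes $f$ by at most $1$ by hypothesis; averaging over $X_{i+1},\dots,X_N$ (whose law does not depend on $X_i$, again by independence) gives $|g_i(0) - g_i(1)| \le 1$. Since $D_i = g_i(X_i) - Z_{i-1}$ and $Z_{i-1} = \E[g_i(X_i) \mid X_1,\dots,X_{i-1}]$, the conditional law of $D_i$ has mean $0$ and is supported on an interval $[a_i,b_i]$, measurable with respect to $X_1,\dots,X_{i-1}$, with $b_i - a_i \le 1$.

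Next I would invoke Hoeffding's lemma: a random variable $W$ with mean $0$ supported on an interval of length $\ell$ satisfies $\E[\mathrm{e}^{tW}] \le \mathrm{e}^{t^2\ell^2/8}$. Applied to $D_i$ conditionally on $X_1,\dots,X_{i-1}$, this yields $\E[\mathrm{e}^{tD_i} \mid X_1,\dots,X_{i-1}] \le \mathrm{e}^{t^2/8}$. Iterating with the tower property, $\E[\mathrm{e}^{t(Z-\E Z)}] = \E\big[\mathrm{e}^{t\sum_{i=1}^{N-1} D_i}\,\E[\mathrm{e}^{tD_N}\mid X_1,\dots,X_{N-1}]\big] \le \cdots \le \mathrm{e}^{Nt^2/8}$. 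Then Markov's inequality gives $\P(Z - \E Z > \lam) \le \mathrm{e}^{-t\lam + Nt^2/8}$, and optimizing at $t = 4\lam/N$ produces $\P(Z - \E Z > \lam) \le \mathrm{e}^{-2\lam^2/N}$. Applying the same argument to $-f$ (which is likewise $1$-Lipschitz in each coordinate) bounds the lower tail, and a union bound over the two one-sided events gives the factor $2$ in the stated inequality.

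The only genuinely delicate point is the conditional range bound in the second paragraph; here independence of the $X_j$ is essential, since it guarantees that conditioning on $X_1,\dots,X_{i-1}$ and resampling $X_i$ leaves the distribution of $X_{i+1},\dots,X_N$ unchanged, which is what lets the Lipschitz hypothesis on $f$ translate into a bound on $|g_i(0)-g_i(1)|$. Everything after that is a routine application of Hoeffding's lemma and the Chernoff method, so that step is the main (mild) obstacle.
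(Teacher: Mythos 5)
Your proof is correct: the Doob martingale exposure, the conditional range bound via independence, Hoeffding's lemma, and the optimization $t=4\lam/N$ do yield the stated two-sided bound $2e^{-2\lam^2/N}$ (and you rightly use the range-$1$ interval bound rather than $|D_i|\le 1$, which would only give the weaker constant). The paper itself does not prove this lemma but cites \cite{ProbMeth} for it, and the standard proof given there is essentially this same bounded-differences martingale argument, so your approach matches the intended one.
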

	For example, if $f(X_1,X_2,\dots,X_N) = X_1 + X_2 + \dots + X_N$ then we obtain a form of the Chernoff Bound for binomial random variables. In all of our applications the $X_i$ will be the indicator function for whether the $i$th edge of $K_n^r$ is in $H_{n,p}^r$.
	\medskip
	
	We first consider $n^{-r/2}\log n\le p\le cn^{-\be_1}$ for some small constant $c$.  Observe that $\E[e(H_{n,p})]=\Theta(p n^r)$, and for $c$ sufficiently small this is at least twice the expected number of copies of $K_{s_1,\ldots,s_r}$ in $H_{n,p}^r$ which is $\Theta(p^{a_{r+1}}n^{\sum_{i=1}^r s_i})$.  Thus by deleting an edge from each copy of $K_{s_1,\ldots,s_r}$ in $H_{n,p}^r$ we see that $\E[\ex(H_{n,p}^r,K_{s_1,\ldots,s_r})]=\Om(pn^r)$.  Observe that $Z=\ex(H_{n,p}^r,K_{s_1,\ldots,s_r})$ satisfies the conditions\footnote{To be somewhat more explicit, we let $X_i=1$ if the $i$th edge of $K_n^r$ is in $H_{n,p}^r$ and $X_i=0$ otherwise.  Then $Z$ is a function of the $X_i$, and changing one value of $X_i$ (i.e. adding or deleting an edge in $H_{n,p}^r$) changes the value of $Z$ (i.e. the size of a largest $K_{s_1,\ldots,s_r}$-free subgraph of $H_{n,p}^r$) by at most 1.} of Lemma~\ref{lem:azuma}, so taking $\lam= c' n^{r/2}\sqrt{\log n}$ for some small $c'$ (and using $\E[Z]\ge \Om(n^{r/2}\log n)$ since $p\ge n^{-r/2}\log n$) shows that the probability that $Z$ is within a constant factor of its expectation tends to 1, giving the a.a.s.\ result.  Moreover, the bound $\ex(H_{n,p}^r,K_{s_1,\ldots,s_r})=\Om(n^{r-\be_1})$ continues to hold a.a.s.\ for $p\ge cn^{-\be_1}$ by the monotonicity of the relative Tur\'an number as a function of $p$.
	
	Next we show  $\ex(H_{n,p}^r,K_{s_1,\ldots,s_r}) = \Omega(p^{1-1/a_r}n^{r-1/a_r})$ when $p\ge n^{-\be_2}$. We observe that the $(r-1)$-degree of any $(r-1)$-set is a binomial random variable with $n-r+1$ trials and probability $p$.  Because $\be_2<1$ (which can be proven using the inequality of arithmetic and geometric means), one can show using Lemma~\ref{lem:azuma} that a.a.s.\ $H_{n,p}^{r}$ has maximum $(r-1)$-degree at most $O(pn)$, and also that a.a.s.\ $e(H_{n,p}^r)=\Om(pn^r)$. Thus if $\ex(n,K_{s_1,\ldots,s_r})=\Om(n^{r-1/a_r})$, by Theorem~\ref{thm:homGen} we have a.a.s.\  \[\ex(H_{n,p}^r,K_{s_1,\ldots,s_r})= \Om((pn)^{-1/a_r}pn^r)=\Om(p^{1-1/a_r}n^{r-1/a_r}).\]
	This proves the lower bounds of Theorem \ref{thm:randK}. \qed
	
	\subsection{Proof of Theorem~\ref{thm:randK} : Upper Bounds}
	
	The upper bound of Theorem~\ref{thm:randK} for small $p$ follows since a.a.s.\ $H_{n,p}^{r}$ has at most $O(pn^r)$ edges, and the result for $p$ in the middle range will follow from the large range since $\E[\ex(H_{n,p}^{r},K_{s_1,\ldots,s_r})]$ is non-decreasing in $p$.  Thus we can assume $p\ge n^{-\be_2}(\log n)^{2a_r/(a_r-1)}$.
	
	Our approach for the upper bounds in this range borrows heavily from the argument used by Morris and Saxton~\cite{MS} for the case $r=2$. To this end, we let $\c{I}=\c{I}(n)$ denote the collection of $K_{s_1,\ldots,s_r}$-free $r$-graphs on $n$ vertices, and let $\c{G}=\c{G}(n,k)$ denote the collection of all $r$-graphs with $n$ vertices and at most $k n^{r-1/a_r}$ edges.  The following lemma is the main technical result we need to prove our upper bounds, where by a colored $r$-graph we mean an $r$-graph together with an arbitrary labeled partition of its edge set
	
	\begin{lem}\label{lem:randTech}
		For any $2\le s_1\le \cdots\le s_r$, there exists a constant $c>0$ such that the following holds for sufficiently large $n,k$ with $k\le n^{\be_2/a_r}(\log n)^{2/(a_r-1)}$.  There exists a collection $\c{S}$ of colored $r$-graphs with $n$ vertices and at most $ck^{1-a_r}n^{r-1/a_r}$ edges and functions $g:\c{I}\to \c{S}$ and $h:\c{S}\to \c{G}(n,k)$ with the following properties:
		
		\begin{itemize}
			\item[$(a)$] For every $s\ge 0$, the number of colored $r$-graphs in $\c{S}$ with $s$ edges is at most
			\[\l(\f{c n^{r-1/a_r}}{s}\r)^{a_r s/(a_r-1)}\exp(c k^{1-a_r}n^{r-1/a_r}).\]
			\item[$(b)$] For every $I\in \c{I}$, we have $g(I)\sub I\sub g(I)\cup h(g(I))$.
		\end{itemize}
	\end{lem}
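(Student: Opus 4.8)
The plan is to adapt the hypergraph container method (in the form used by Morris--Saxton~\cite{MS} for graphs) to the $r$-uniform setting, using the supersaturation result Lemma~\ref{lem:construct} as the crucial input that guarantees a ``clustering'' of $K_{s_1,\ldots,s_r}$-copies allowing the container algorithm to make progress. First I would set up the hypergraph $\c{H}$ whose vertices are the edges of $K_n^r$ and whose hyperedges are the edge-sets of copies of $K_{s_1,\ldots,s_r}$; so $\c{H}$ is $a_{r+1}$-uniform on $N = \binom{n}{r}$ vertices, and the independent sets of $\c{H}$ are exactly the members of $\c{I}$. The supersaturation Lemma~\ref{lem:construct} (together with a standard averaging over random subsets of each part) should yield a co-degree condition: for a typical $j$-subset of $V(\c{H})$ the number of copies of $K_{s_1,\ldots,s_r}$ extending it is controlled, which is precisely the hypothesis needed to run the Balogh--Morris--Samotij / Saxton--Thomason container lemma. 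I would apply that container lemma iteratively to obtain, for each $I\in\c{I}$, a ``fingerprint'' $g(I)\subseteq I$ of bounded size and a container $h(g(I))$ with $g(I)\subseteq I\subseteq g(I)\cup h(g(I))$ and $e(g(I)\cup h(g(I))) \le k n^{r-1/a_r}$; iterating the one-step container lemma roughly $\log n$ many times is what forces the restriction $k\le n^{\be_2/a_r}(\log n)^{2/(a_r-1)}$ and produces the $(\log n)^{2/(a_r-1)}$ factor. The colored structure on $\c{S}$ records, for bookkeeping in the iteration, which ``round'' each fingerprint edge was selected in, so that $g(I)$ lands in the set $\c{S}$ of colored $r$-graphs and $h(g(I))\in\c{G}(n,k)$.

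The counting bound $(a)$ is then a direct consequence of the size bounds coming out of the container iteration: a colored $r$-graph with $s$ edges arises by choosing $s$ vertices of $\c{H}$ with colors, and the exponent $a_r s/(a_r-1)$ together with the factor $\bigl(cn^{r-1/a_r}/s\bigr)^{a_r s/(a_r-1)}$ is exactly the number of ways to place the fingerprint given the per-round quota, while the $\exp(ck^{1-a_r}n^{r-1/a_r})$ term accounts for the container itself (the number of containers is at most $\exp$ of the total fingerprint size, which is $O(k^{1-a_r}n^{r-1/a_r})$). Property $(b)$ is the defining property of the container pair $(g(I),h(g(I)))$ and is immediate from the construction. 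For the containment $I\subseteq g(I)\cup h(g(I))$ to be meaningful one needs each container to be ``almost independent'' in $\c{H}$, i.e.\ to have at most $k n^{r-1/a_r}$ edges as an $r$-graph, and this is guaranteed by stopping the iteration once the container is sparse enough; the choice of threshold $k n^{r-1/a_r}$ matches the conjectured Tur\'an number $\ex(n,K_{s_1,\ldots,s_r})=\Theta(n^{r-1/a_r})$ assumed in the hypothesis of Theorem~\ref{thm:randK}.

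The main obstacle I anticipate is verifying the co-degree (``supersaturation-in-every-layer'') hypothesis of the container lemma with the exact exponents needed to produce $\be_2$ and the power of $\log n$ claimed. Lemma~\ref{lem:construct} gives supersaturation only for the specific unbalanced host with part sizes $n^{a_i}$, so to feed it into the container machinery for the balanced $K_n^r$ one must pass to random balanced-to-unbalanced reductions or, more carefully, establish directly that for every $1\le j< a_{r+1}$ and every $j$-set of potential edges the number of copies of $K_{s_1,\ldots,s_r}$ containing all of them is $O\bigl((kn^{r-1/a_r})^{-j}\cdot \#\{\text{copies}\}\bigr)$ up to the right polylog slack; tracking how the exponents $a_r$, $a_{r+1}$, and $\sum s_i$ combine through this estimate, and confirming they assemble into $\be_2 = \tfrac{a_r(\sum_{i=1}^{r-1}s_i - r)+1}{(a_r-1)(a_{r+1}-1)}$, is the delicate bookkeeping at the heart of the argument. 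A secondary technical point is ensuring the iteration terminates after $O(\log n)$ rounds with the stated fingerprint size bound; this requires that each round removes a constant fraction of the ``excess'' edges, which again rests on the supersaturation estimate being uniform over the relevant layers.
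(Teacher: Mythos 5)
Your overall architecture is the right one and matches the paper: build the $a_{r+1}$-uniform hypergraph whose vertices are edges of the host and whose hyperedges are copies of $K_{s_1,\ldots,s_r}$, apply a one-step container lemma, iterate starting from $K_n^r$ until the container has at most $kn^{r-1/a_r}$ edges, record the round of each fingerprint edge as a color, and count fingerprints by per-round quotas. However, there is a genuine gap at exactly the point you flag as an ``anticipated obstacle'': the input you propose, Lemma~\ref{lem:construct}, is a plain supersaturation count in one specific unbalanced complete $r$-partite host with parts of sizes $n^{a_i}$, and it cannot serve as the hypothesis of the container lemma. The container lemma needs, for \emph{every} $n$-vertex $r$-graph $H$ with $kn^{r-1/a_r}$ edges, a family $\c{H}$ of copies that is simultaneously large and \emph{balanced}, i.e.\ with explicit upper bounds on $d_{\c{H}}(\sig)$ for all $\sig$ with $2\le|\sig|\le a_{r+1}$; merely knowing that many copies exist (or that they exist in one unbalanced host) gives no control on these codegrees, and ``standard averaging over random subsets'' does not produce it. The paper supplies this via the balanced supersaturation theorem of Corsten and Tran (Proposition~\ref{prop:CT}), and then the real work is the computation converting their bounds $D^{t_1,\ldots,t_r}$ into bounds on the degrees $\Del_j$ and the container parameter $\tau$ (the maximization over the intervals $[b_i,b_{i-1}]$, the monotonicity argument showing only the $i=0$ and $i=r-1$ terms matter), which is precisely where the exponent $\gam$ and hence $\be_2$ emerge. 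Your proposal leaves this entire step as a to-do, so the exponents in the statement are not actually derived.

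Two secondary points. First, the hypothesis $k\le n^{\be_2/a_r}(\log n)^{2/(a_r-1)}$ is not forced by ``iterating $\log n$ times''; it is used in the counting of part $(a)$, to absorb the rounds in which the fingerprint bound is $\mu(j)=\ep^{-1}n^{-\gam}$ into the factor $\exp(ck^{1-a_r}n^{r-1/a_r})$ (using $\be_2/a_r=\gam/(a_r-1)$). Second, your sketch of $(a)$ is too coarse: to get the stated bound $\bigl(cn^{r-1/a_r}/s\bigr)^{a_rs/(a_r-1)}$ one must multiply binomial coefficients $\binom{k(j)n^{r-1/a_r}}{b(j)}$ across rounds with $\sum b(j)=s$ and geometrically decaying $k(j)$, which requires a convexity-type inequality such as Lemma~\ref{lem:bound}; ``number of ways to place the fingerprint given the per-round quota'' does not by itself yield the claimed dependence on $s$.
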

	Before proving Lemma~\ref{lem:randTech}, we first illustrate how it implies the upper bound of Theorem~\ref{thm:randK} when $p\ge n^{-\be_2}(\log n)^{2a_r/(a_r-1)}$.

	Define $k=p^{-1/a_r}$, which means $k\le n^{\be_2/a_r}(\log n)^{-2/(a_r-1)}$, and thus we can define $\c{S},g,h$ as in Lemma~\ref{lem:randTech}.  If there exists a $K_{s_1,\ldots,s_r}$-free subgraph $I\sub H_{n,p}^{r}$ with $m$ edges, then in particular $g(I)\sub H_{n,p}^{r}$ and $H_{n,p}^{r}$ contains at least $m-e(g(I))$ edges of $h(g(I))$.  Thus for $m\ge 2 k^{1-a_r}n^{r-1/a_r}$, the probability of $I\sub H_{n,p}^r$ for some $K_{s_1,\ldots,s_r}$-free $I$ with $m$ edges is at most
	{\small \begin{align*}
		\sum_{S\in \c{S}} p^m{kn^{r-1/a_r}\choose m-e(S)}&\le \sum_{s=0}^{ck^{1-a_r}n^{r-1/a_r}}\sum_{S\in \c{S},\ e(S)=s}p^s\cdot \l(\f{e pkn^{r-1/a_r}}{m-s}\r)^{m-s} \\
		&\le \sum_{s=0}^{c k^{1-a_r}n^{r-1/a_r}}\l(\f{c p^{(a_r-1)/a_r}n^{r-1/a_r}}{s}\r)^{a_r s/(a_r-1)} \exp(c k^{1-a_r}n^{r-1/a_r})\cdot \l(\f{e pkn^{r-1/a_r}}{m-s}\r)^{m-s}\\
		&\le \exp\l[O(1)\cdot (p^{(a_r-1)/a_r}n^{r-1/a_r}+k^{1-a_r}n^{r-1/a_r})\r]\cdot \l(\f{2e pkn^{r-1/a_r}}{m}\r)^{m/2}\\  &=\exp\l[O(1)\cdot p^{1-1/a_r}n^{r-1/a_r}\r]\cdot \l(\f{2e p^{1-1/a_r}n^{r-1/a_r}}{m}\r)^{m/2},
		\end{align*}}
	where the second inequality used Lemma~\ref{lem:randTech}(a), to get the last inequality we used that $(d/s)^{s}\le e^{d/e}$ and that $m-s\ge \half m$ for $m\ge 2k^{1-a_r}n^{r-1/a_r}$, and the last equality used $k=p^{-1/a_r}$.  This quantity will tend to 0 as $n$ towards infinity provided $m\ge c'p^{1-1/a_r}n^{r-1/a_r}$ for some sufficiently large constant $c'$, proving the result.\qed
	
	\medskip
	
	It remains to prove Lemma \ref{lem:randTech}, and for ease of presentation we do this in the following two subsections.
	
	\subsection{Balanced Supersaturation}
	To adapt the proof of Theorem~6.1 of Morris and Saxton~\cite{MS}, we require a balanced supersaturation result from Corsten and Tran~\cite{CT} which roughly says that if $H$ is an $r$-graph with significantly more than $n^{r-1/a_r}$ edges, then one can find a large collection $\c{H}$ of copies of $K_{s_1,\ldots,s_r}$ in $H$ which are relatively spread apart.
	
	More precisely, given an $r$-graph $H$, we identify copies of $K_{s_1,\ldots,s_r}$ in $H$ by ordered tuples $(S_1,\ldots,S_r)$ with $|S_i|=s_i$ such that these sets induce a  $K_{s_1,\ldots,s_r}$ in $H$.   If $\c{H}$ is a collection of copies of $K_{s_1,\ldots,s_r}$ in $H$ and $(T_1,\ldots,T_r)$ is an ordered tuple with $1\le |T_i|\le s_i$, we define $d_{\c{H}}(T_1,\ldots,T_r)$ to be the number of $(S_1,\ldots,S_r)\in \c{H}$ such that $T_i\sub S_i$ for all $i$.  If $t_1,\ldots,t_r$ are such that $1\le t_i\le s_i$ for all $i$, then for $\del,\ell,n>0$ we define the functions
	\[D^{t_1,\ldots,t_r}(\del,\ell,n)=\ell^{\sum_{i=1}^r a_i (s_i-t_i)}(\del n)^{\sum_{i=1}^r (s_i-t_i)} ,\]
	where we recall $a_i=\prod_{j<i} s_j$; and whenever $\del,\ell,n$ are understood we simply denote this function by $D^{t_1,\ldots,t_r}$.  For example, when $r=3$ we have
	\[D^{t_1,t_2,t_3}=\ell^{(s_1-t_1)+s_1(s_2-t_2)+s_1s_2(s_3-t_3)}(\del n)^{s_1-t_1+s_2-t_2+s_3-t_3}.\]
	We note that in \cite{CT} this function was defined in terms of $k=\ell n^{-1/a_r}$, but our intermediate computations will be greatly simplified by using this change in variables.  In particular, we can rephrase Theorem~3.1 of Corsten and Tran~\cite{CT} as follows.
	
	\begin{prop}[\cite{CT}] \label{prop:CT}
		For every $2\le s_1\le \cdots \le s_r$ there exist constants $\del,\ell_0>0$ such that the following holds for every $\ell \ge \ell_0 n^{-1/a_r}$ and every $n\in \N$.  Given an $r$-graph $H$ with $n$ vertices and $\ell n^{r}$ edges, there exists a collection $\c{H}$ of copies of $K_{s_1,\ldots,s_r}$ in $H$ such that
		\begin{enumerate}
			\item[$(a)$] $|\c{H}|\ge \del \ell^{a_{r+1}}n^{\sum_{i=1}^r s_i}$, and
			\item[$(b)$] $d_{\c{H}}(T_1,\ldots,T_r)\le D^{|T_1|,\ldots,|T_r|}$ for all $T_i\sub V(G)$ with $1\le |T_i|\le s_i$.
		\end{enumerate}
	\end{prop}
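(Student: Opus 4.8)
The lead-in already identifies the statement as Theorem~3.1 of Corsten and Tran~\cite{CT} written in a different normalisation, so the plan is not to reprove balanced supersaturation from scratch but simply to carry out the change of variables $k=\ell n^{-1/a_r}$ and verify that every object in their conclusion matches the corresponding assertion here. First I would reproduce their theorem in its original form, where the parameter playing the role of $\ell$ --- call it $k$ --- satisfies $k=\ell n^{-1/a_r}$; under this substitution the hypothesis ``$H$ has $n$ vertices and $\ell n^r$ edges with $\ell\ge \ell_0 n^{-1/a_r}$'' becomes ``$e(H)$ is at least a constant multiple of the conjectural threshold $n^{r-1/a_r}$'', which is exactly the regime of \cite{CT}. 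One then records the three outputs of their theorem --- a family $\c{H}$ of copies of $K_{s_1,\dots,s_r}$, a lower bound on $|\c{H}|$, and an upper bound on the codegrees $d_{\c{H}}(T_1,\dots,T_r)$ --- and checks them against conclusions $(a)$ and $(b)$ here.

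Concretely the verification proceeds in the following order. (1) Confirm that the bound $|\c{H}|\ge \del\,\ell^{a_{r+1}}n^{\sum_{i=1}^r s_i}$ is exactly conclusion $(a)$ of \cite{CT} after substitution; this is also the first-moment count of copies of $K_{s_1,\dots,s_r}$ in a host of density $\ell$, since such a copy has $\sum s_i$ vertices and $a_{r+1}=\prod_{i\le r}s_i$ edges. (2) Rewrite the Corsten--Tran codegree function, which is stated in terms of $k$, replace $k$ by $\ell n^{-1/a_r}$, and check that the result equals $\ell^{\sum_{i=1}^r a_i(s_i-t_i)}(\del n)^{\sum_{i=1}^r(s_i-t_i)}$; here one redistributes powers of $n$ between the $\ell$-power and the $n$-power, using $a_{i+1}=s_ia_i$ and $\sum_{i=1}^r a_i(s_i-1)=a_{r+1}-1$. (3) Check that the admissible range $\ell\ge \ell_0 n^{-1/a_r}$ is precisely the condition ``$k$ at least a constant'' from \cite{CT}, and take $\del$ to be their constant. (4) Observe that no case analysis on $n$ is required, the statement being uniform in $n\in\N$.

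The only genuinely delicate point is step (2): one must be careful about which powers of $n$ in the Corsten--Tran $D$-function are ``native'' and which are produced by $k=\ell n^{-1/a_r}$, and confirm that after the substitution the $n$-exponent collapses to exactly $\sum_{i=1}^r (s_i-t_i)$ with all remaining $n$-dependence absorbed into $\ell^{\sum_{i=1}^r a_i(s_i-t_i)}$; the sanity check is that at $t_i=1$ for every $i$ the bound is within a factor $\ell^{-1}=O(n^{1/a_r})$ of the average codegree forced by $(a)$, i.e.\ $\c{H}$ is $n^{1/a_r}$-spread, which is exactly what the container argument in the following subsections consumes. If instead a self-contained proof were wanted, the route would be the Morris--Saxton template~\cite{MS}: first prove plain supersaturation --- $\Theta(\ell^{a_{r+1}}n^{\sum s_i})$ copies, by a convexity/tensor-power argument in the spirit of the proof of Lemma~\ref{lem:construct} --- and then iteratively discard every copy over-represented on some sub-tuple, charging the discarded copies so that only a constant fraction is lost; there the hard part is the charging estimate, which is what pins down the precise form of the $D$-function.
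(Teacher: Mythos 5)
Your proposal matches the paper exactly: Proposition~\ref{prop:CT} is not proved in the paper at all, but is obtained precisely as you describe, by citing Theorem~3.1 of Corsten and Tran~\cite{CT} and translating it through the change of variables between their density parameter $k$ and the normalisation $e(H)=\ell n^r$ (so that $\ell\ge\ell_0 n^{-1/a_r}$ is their ``$k$ at least a constant'' regime), with the codegree function $D^{t_1,\ldots,t_r}$ rewritten accordingly. The only nit is the direction of the substitution: it should read $\ell=k n^{-1/a_r}$ (equivalently $k=\ell n^{1/a_r}$), which is what your step (3) implicitly uses, so the verification goes through as you outline.
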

	We can treat $\c{H}$ from Proposition~\ref{prop:CT} as an $a_{r+1}$-uniform hypergraph with $V(\c{H})=E(H)$ and where edges $E\in E(\c{H})$ correspond to copies of $K_{s_1,\ldots,s_r}$ in $H$ which use all of the edges $e\in E$.  For any $\sig \sub E(H)$, let $d_{\c{H}}(\sig)$ denote the number of edges in $\c{H}$ containing $\sig$.  To apply the method of hypergraph containers, we show that $d_{\c{H}}(\sig)$ is relatively small for all $\sig$.
	
	For $\sig\sub V(\c{H})=E(H)$, define $V(\sig):=\bigcup_{e\in \sig}e$ to be the vertices involving edges of $\sig$.  Observe that if $(S_1,\ldots,S_r)$ corresponds to some $K_{s_1,\ldots,s_r}$ in $\c{H}$ containing $\sig$, then this copy contributes to $d_{\c{H}}(T_1,\ldots,T_r)$ where $T_i=S_i\cap V(\sig)$. Thus if $\c{P}$ is the set of all $r$-partitions of $V(\sig)$, we have \begin{equation}\label{eq:deg}d_{\c{H}}(\sig)\le \sum_{(T_1,\ldots,T_r)\in \c{P}} d_{\c{H}}(T_1,\ldots,T_r)= O(\max D^{t_1,\ldots,t_r}),\end{equation}
	where the maximum ranges over all integers $1\le t_i\le s_i$ with $\prod t_i\ge |\sig|$ (because the $r$-graph induced by any element of $\c{P}$ must be a complete $r$-partite $r$-graph to have positive degree in $\c{H}$).  To simplify computations, we extend the definition of $D^{t_1,\ldots,t_r}$ and the maximum of \eqref{eq:deg} to all real $t_i$ in this range.  Observe that $D^{t_1,\ldots,t_r}$ is a decreasing function in each of the variables $t_i$ provided $\ell^{a_r}\ge (\del n)^{-1}$, so possibly by setting $\ell_0=\del^{-1/a_r}$ we can assume for our range of $\ell$ that we have $\prod t_i=|\sig|$ exactly.
	
	For ease of notation we define \[ b_i=\prod_{j>i} s_j,\]
	where we adopt the convention $b_r=1$.  Because $\ell=O(1)$ (since the $r$-graph $H$ has at most $O(n^r)$ edges), we see that $D^{t_1,\ldots,t_r}$ is maximized given $\prod t_i=|\sig|$ when $t_r$ is made as large as possible, with $t_{r-1}$ as large as possible subject to this, and so on. In particular, if $|\sig|$ lies in the interval $[b_i,b_{i-1}]$, then the maximum of \eqref{eq:deg} occurs when $t_j=s_j$ for all $j>i$, $t_j=1$ for all $j<i$, and $t_i=|\sig|/b_j$.  Thus
	\begin{equation*}d_{\c{H}}(\sig)=O\l(\ell^{\sum_{j<i} a_j(s_j-1)+a_i(s_i-|\sig|/b_i)} (\del n)^{\sum_{j\le i}s_j-i+1-|\sig|/b_i}\r)\tr{ for }|\sig|\in [b_i,b_{i-1}],
	\end{equation*}
	and because $a_js_j=a_{j+1}$ for all $j$, this can be written more succinctly as
	\begin{align}
	\label{eq:interval}d_{\c{H}}(\sig)&=O\l(\ell^{a_{i}(s_i+1-|\sig|/b_i)-1}(\del n)^{\sum_{j\le i}s_j-i+1-|\sig|/b_i}\r)\tr{ for }|\sig|\in [b_i,b_{i-1}],\\
	\label{eq:interval2}d_{\c{H}}(\sig)&=O\l(\ell^{a_{i+1}-1}(\del n)^{\sum_{j\le i}s_j-i}\r)\tr{ for }|\sig|=b_i.
	\end{align}
	In particular, \eqref{eq:interval2} implies
	\begin{equation}\label{eq:Del1} d_{\c{H}}(\sig)=O\l(\ell^{a_{r+1}-1}(\del n)^{\sum_{i=1}^r s_i-r}\r),\tr{ for }|\sig|=1.\end{equation}
	
	From now on we consider $\sig$ with $|\sig|\ge2$ and define
	\[\phi(\sig)=\l(\f{e(H)}{|\c{H}|}d_{\c{H}}(\sig)\r)^{\rec{|\sig|-1}},\hspace{2em} \tau'=\max_{\sig:\ 2\le|\sig|\le a_{r+1}} \phi(\sig).\]
	Using \eqref{eq:interval} and the bound on $|\c{H}|$ from Proposition~\ref{prop:CT}, we can bound $\phi(\sig)$ by a function of the form
	\[O\l(\ell^{\f{c_i+d_i|\sig|}{|\sig|-1}}(\del n)^{\f{c_i'+d_i'|\sig|}{|\sig|-1}}\r),\]
	where the constants $c_i,d_i,c_i',d_i'$ depend on which $[b_i,b_{i-1}]$ interval $|\sig|$ belongs to.  With this formulation and a bit of calculus, one sees that restricted to any $[b_i,b_{i-1}]$ interval this bound on $\phi(\sig)$ is either non-decreasing or non-increasing (depending only on the value of $\ell$ relative to $\del n$).  In particular, to upper bound $\tau'$ it is enough to use these upper bounds for $\phi(\sig)$ whenever $|\sig|$ is an endpoint of some $[b_i,b_{i-1}]$ interval.
	
	To this end, first observe that for all $i$, \[e(H)/|\c{H}|=O\l(  \ell^{1-a_{r+1}}(\del n)^{r-\sum_{j=1}^r s_j}\r)=O\l(\ell^{1-a_{i+1}b_{i}}(\del n)^{r-\sum_{j=1}^r s_j}\r).\]  Thus for $0\le i<r$ we have by \eqref{eq:interval2} that with $|\sig|=b_i$,
	\[\phi(\sig)=O\l( \l(\ell^{a_{i+1}(1-b_i)}(\del n)^{r-i-\sum_{j>i}s_j}\r)^{1/(b_i-1)}\r)=O\l(\ell^{-a_{i+1}}(\del n)^\f{r-i-\sum_{j>i}s_j}{b_i-1}\r),\]
	and for all relevant $|\sig|\in [b_r,b_{r-1}]$ (i.e. those with $2\le |\sig|\le b_{r-1}=s_r$), we have by \eqref{eq:interval} that
	\[\phi(\sig)=O\l( (\ell^{a_r(1-|\sig|)}(\del n)^{1-|\sig|})^{1/(|\sig|-1)}\r)=O(\ell^{-a_r}(\del n)^{-1}).\]
	Putting all of this together, we find
	\begin{equation}\label{eq:tau}\tau' = O\l(\max_{0\le i\le r-1}\left\{\ell^{-a_{i+1}}(\del n)^\f{r-i-\sum_{j>i}s_j}{b_i-1}\right\}\r).\end{equation}
	
	We claim that only the $i=0,r-1$ terms of this maximum are relevant.  Indeed, observe that the point at which the $(r-1)$st term $\ell^{-a_r}(\del n)^{-1}$ equals the $i$th term is exactly when
	\[\log_{\del n}(\ell)= \f{\sum_{j>i}s_j-(r-i)-(b_i-1)}{(a_r-a_{i+1})(b_i-1)}=\f{\sum_{j>i}s_j-(r-i-1)-b_i}{a_{i+1}(b_{i}s_r^{-1}-1)(b_i-1)}:=\gam_i\]
	Because $b_i=b_{i+1}s_{i+1}$ and $s_{i+1}\ge 1$, we have \begin{align*}b_{i}-1&\ge s_{i+1}(b_{i+1}-1),\\ b_{i}s_r^{-1}-1&\ge s_{i+1}(b_{i+1}s_r^{-1}-1).\end{align*}  We also have \[\sum_{j>i} s_j-(r-i-1)=s_{i+1}+\sum_{j>i+1} s_j-(r-i-1)\le s_{i+1}+s_{i+1}\l(\sum_{j>i+1} s_j-(r-i-2)\r),\]
	where this last step used $s_{i+1}\ge 1$ and $\sum_{j>i+1} s_j\ge (r-i-2)$ since $s_j\ge 1$ for all $j$.  These observations imply
	\begin{align*}
	\gam_{i}\le \f{s_{i+1}+s_{i+1}(\sum_{j>i+1}s_j-(r-i-2))-b_{i}}{a_{i+1}\cdot s_{i+1}^2(b_{i+1}s_r^{-1}-1)(b_{i+1}-1)}=\f{1+\sum_{j>i+1} s_j-(r-i-2)-b_{i+1}}{a_{i+2}(b_{i+1}s_r^{-1}-1)(b_{i+1}-1)}=\gam_{i+1},
	\end{align*}
	where this first equality used $a_{i+1}s_{i+1}=a_{i+2}$ and $b_{i}=b_{i+1}s_{i+1}$. In total, for $\ell\le n^{\gam_{0}}$ the $i=r-1$ term of \eqref{eq:tau} is the maximum, and at $\ell=n^{\gam_{0}}$ the $i=0$ term is equal to the $i=r-1$ term.  Because the $i=0$ term has the largest power of $\ell$, it will continue to be the maximum value for all $\ell\ge n^{\gam_{0}}$, proving the claim.
	
	Thus $\tau'=O\l(  \max\{\ell^{-a_r}(\del n)^{-1},\ell^{-1}(\del n)^{(r-\sum_{i=1}^rs_i)/(b_0-1)}\}\r)$.  If we let $k=\ell (\del n)^{-1/a_r}$, then note that the exponent of $\del n$ in this second term of the maximum equals
	\begin{equation}\label{eq:gam}a_r^{-1}-\f{\sum_{i=1}^{r} s_i-r}{b_0-1}=-\f{a_r(\sum_{i=1}^{r-1}s_i-r)+1}{a_r(a_{r+1}-1)}:=-\gam.\end{equation}
	Using this and \eqref{eq:Del1}, we can reformulate Proposition~\ref{prop:CT} as follows.
	
	\begin{prop}\label{prop:CT2}
		For every $2\le s_1\le \cdots \le s_r$ there exist constants $\del,k_0>0$ such that the following holds for every $k \ge k_0$ and every $n\in \N$.  Given an $r$-graph $H$ with $n$ vertices and $k n^{r-1/a_r}$ edges, there exists a collection $\c{H}$ of copies of $K_{s_1,\ldots,s_r}$ in $H$ which we view as an $a_{r+1}$-graph such that
		\begin{enumerate}
			\item[$(a)$] $e(\c{H})\ge \del k^{a_{r+1}}n^{\sum_{i=1}^{r-1} s_i}$, and
			\item[$(b)$]  If $\Del_j$ is the maximum $j$-degree of $\c{H}$, then
			\[\Del_1\le k^{a_{r+1}-1} (\del n)^{ \sum_{i=1}^{r-1}s_i-r+1/a_r},\]
			
			and for $j\ge 2$ we have
			\[\l(\f{e(H)}{e(\c{H})}\Del_j\r)^{1/(j-1)}=O(\del^{-\gam} \max\{k^{-a_r},k^{-1}n^{-\gam}\}).\]
		\end{enumerate}
	\end{prop}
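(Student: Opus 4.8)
The plan is to obtain Proposition~\ref{prop:CT2} as a direct restatement of Proposition~\ref{prop:CT}, using the degree computations \eqref{eq:deg}--\eqref{eq:gam} that precede it; no new idea is required beyond a change of variables. First I would apply Proposition~\ref{prop:CT} to the hypergraph $H$ of the statement, choosing $\ell$ so that $\ell n^r$ equals $e(H)=kn^{r-1/a_r}$, i.e. $\ell=kn^{-1/a_r}$; the hypothesis $\ell\ge \ell_0 n^{-1/a_r}$ then reads $k\ge \ell_0$, so one may take $k_0:=\ell_0$. This produces the collection $\c{H}$ of copies of $K_{s_1,\ldots,s_r}$, which — as explained just before \eqref{eq:deg} — we regard as an $a_{r+1}$-uniform hypergraph on $V(\c{H})=E(H)$, so that $e(\c{H})=|\c{H}|$ and its maximum $j$-degree is $\Del_j=\max_{|\sig|=j}d_{\c{H}}(\sig)$. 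Throughout I will not track the various constant powers of $\del$ produced by these substitutions, since shrinking $\del$ absorbs them, the only cost being a weakening of the lower bound in part $(a)$, which we can afford.

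For part $(a)$, I would substitute $\ell=kn^{-1/a_r}$ into the bound $|\c{H}|\ge \del\ell^{a_{r+1}}n^{\sum_{i=1}^r s_i}$ and use $a_{r+1}=a_r s_r$, so that $\ell^{a_{r+1}}=k^{a_{r+1}}n^{-s_r}$ and hence $e(\c{H})\ge \del k^{a_{r+1}}n^{\sum_{i=1}^{r-1}s_i}$. For the bound on $\Del_1$ in part $(b)$, I would quote \eqref{eq:Del1}, which already gives $\Del_1=O(\ell^{a_{r+1}-1}(\del n)^{\sum_{i=1}^r s_i-r})$ — this in turn came from the union bound \eqref{eq:deg} over $r$-partitions of $V(\sig)$, together with the monotonicity of $D^{t_1,\ldots,t_r}$ in each coordinate, which pins the extremal case at $\prod t_i=|\sig|$. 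Substituting $\ell=kn^{-1/a_r}$ and using $(a_{r+1}-1)/a_r=s_r-1/a_r$ turns the power of $n$ into $\sum_{i=1}^{r-1}s_i-r+1/a_r$, giving $\Del_1=O(k^{a_{r+1}-1}(\del n)^{\sum_{i=1}^{r-1}s_i-r+1/a_r})$, which is the stated inequality after adjusting the constant $\del$.

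For the remaining bound in part $(b)$, the point is that, directly from the definitions of $\phi$ and $\tau'$ and from $e(\c{H})=|\c{H}|$, one has $\left(\frac{e(H)}{e(\c{H})}\Del_j\right)^{1/(j-1)}=\max_{|\sig|=j}\phi(\sig)\le \tau'$ for every $2\le j\le a_{r+1}$. It therefore suffices to invoke the bound $\tau'=O\!\left(\max\{\ell^{-a_r}(\del n)^{-1},\,\ell^{-1}(\del n)^{(r-\sum_{i=1}^r s_i)/(b_0-1)}\}\right)$ established above and rewrite it via $\ell=kn^{-1/a_r}$: the first term becomes $O(\del^{-1}k^{-a_r})$, and the second becomes $O(k^{-1}n^{1/a_r+(r-\sum_{i=1}^r s_i)/(b_0-1)})$ up to a constant power of $\del$, whose exponent of $n$ equals $-\gam$ by the computation \eqref{eq:gam} defining $\gam$. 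Collecting the constants into a single factor $\del^{-\gam}$ yields $\tau'=O(\del^{-\gam}\max\{k^{-a_r},\,k^{-1}n^{-\gam}\})$, completing part $(b)$. I expect no genuine obstacle here: every step is routine substitution, and the one subtle point — reducing the bound on $\tau'$ from the $r$ candidate exponents $\ell^{-a_{i+1}}(\del n)^{(r-i-\sum_{j>i}s_j)/(b_i-1)}$ to just the two extreme ones $i=0$ and $i=r-1$ via the monotonicity inequality $\gam_i\le \gam_{i+1}$ — has already been carried out in the text.
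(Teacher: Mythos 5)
Your proposal is correct and is essentially the paper's own argument: the paper obtains Proposition~\ref{prop:CT2} precisely by feeding Proposition~\ref{prop:CT} through the codegree and $\tau'$ computations \eqref{eq:deg}--\eqref{eq:gam} and a change of variables relating $k$ and $\ell$, which is exactly what you do. The only difference is constant bookkeeping — the paper folds $\del$ into the substitution ($k=\ell(\del n)^{-1/a_r}$) so the second bound comes out with the factor $\del^{-\gam}$ on the nose, whereas your choice $\ell=kn^{-1/a_r}$ produces a different constant power of $\del$ (e.g.\ $\del^{-\be_1}$ in the term $k^{-1}n^{-\gam}$) — and this is immaterial since $\del$ depends only on $s_1,\ldots,s_r$ and any such power is absorbed by the implicit constant.
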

	
	\subsection{Proof of Lemma \ref{lem:randTech}}
	
	With Proposition~\ref{prop:CT2} established, the other technical tool we need to prove Lemma \ref{lem:randTech} is the following container lemma.
	
	\begin{lem}[\cite{MS}]\label{lem:containers}
		Let $q\ge 2$ and $0<\del<\del_0(q)$ be sufficiently small.  Let $\c{H}$ be a $q$-graph with $N$ vertices and maximum $j$-degree $\Del_j$ for all $j$, and suppose $\tau>0$ is such that   \[\f{|V(\c{H})|}{e(\c{H})}\sum_{j=2}^{q} \f{\Del_j}{\tau^{j-1}}\le \del.\]
		Then there exists a collection $\c{C}$ of subsets of $V(\c{H})$ and a function $f$ from subsets of $V(\c{H})$ to $\c{C}$ such that:
		\begin{itemize}
			\item[$(a)$] for every independent set $I\sub V(\c{H})$ there exists $T\sub I$ with $|T|\le \tau N/\del$ and $I\sub f(T)$, and
			\item[$(b)$] $e(\c{H}[C])\le (1-\del)e(\c{H})$ for every $C\in \c{C}$.
		\end{itemize}
	\end{lem}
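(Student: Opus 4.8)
The statement is a one-step hypergraph container lemma, and the proof I would give reconstructs the container-method argument of Balogh--Morris--Samotij and Saxton--Thomason in the form used by Morris and Saxton. Fix once and for all a linear order on $V(\c{H})$. The container function $f$ and the fingerprints $T=T(I)$ are produced by a single deterministic procedure applied to each independent set $I\sub V(\c{H})$; because the procedure interacts with $I$ only by asking whether individual vertices belong to it, its recorded output is a subset of $I$, and the container it builds depends only on the recorded answers, hence only on $T$. This is what lets us take the domain of $f$ to be subsets of $V(\c{H})$.

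The procedure is greedy by degree with recursion on the uniformity. Maintain a current subhypergraph $\c{H}'$, initially $\c{H}$, and a set $A$ of excluded vertices, initially empty. Repeat the following: among the unexamined vertices of $\c{H}'$, let $u$ be one of maximum degree in $\c{H}'$, with ties broken by the fixed order. If $u\notin I$, delete $u$ together with all edges through it from $\c{H}'$ and continue. If $u\in I$, place $u$ in $T$, form the link $(q-1)$-graph $L_u$ of $u$ in $\c{H}'$, and recursively run the procedure on $L_u$ with the independent set $I\cap V(L_u)$ (which is independent in $L_u$ since $I$ contains no edge of $\c{H}$); append the fingerprint it returns to $T$, add the vertices it declares excluded to $A$, then delete $u$ and its edges from $\c{H}'$. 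Stop as soon as $e(\c{H}')\le(1-\del)e(\c{H})$ or $\c{H}'$ is edgeless, and output $f(T)=V(\c{H})\sm A$.

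It then remains to check three things. (i) $I\sub f(T)$: a vertex is placed in $A$ only because it lies in the link of some already-committed vertex of $I$ while the recursion certifies it is not in $I$, so since $I$ is independent this never touches $I$ and $A\cap I=\emptyset$. (ii) Property $(b)$ is immediate from the stopping rule. (iii) $|T|\le\tau N/\del$: a vertex is selected into $T$ at a given step only when its current degree is at least a $\tau$-controlled multiple of the current average degree, so the number selected at each level of the recursion is bounded in terms of the edges removed at that level; summing over the at most $q-1$ recursion levels and using the hypothesis $\f{N}{e(\c{H})}\sum_{j=2}^{q}\f{\Del_j}{\tau^{j-1}}\le\del$ — which is precisely the inequality ensuring that the degree thresholds, and hence these per-level bounds, propagate correctly to the link graphs — yields the stated bound. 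The main obstacle is exactly this last step: verifying that the codegree hypothesis descends through the recursion with the right constants, choosing $\del_0(q)$ small enough, and confirming that the fingerprint contributions from all $q-1$ levels still sum to $O(\tau N/\del)$. This bookkeeping is what the weighted codegree sum in the hypothesis is engineered to control, and faithfully reproducing the Morris--Saxton computation is where essentially all of the work lies.
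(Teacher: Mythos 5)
This lemma is not proved in the paper at all: it is imported verbatim from Morris and Saxton \cite{MS} (itself a form of the Balogh--Morris--Samotij / Saxton--Thomason hypergraph container theorem), so the relevant comparison is between your sketch and the known container-method proof rather than anything in this text. Your outline is in the right spirit --- a deterministic max-degree ``scythe'' that queries membership in $I$, records the positive answers as the fingerprint $T$, recurses on links to lower uniformity, and observes that the container depends only on $T$ because the run can be re-simulated from $T$ --- but as written it has a genuine gap at the one place where the content of the lemma lives.

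Concretely, your algorithm adds \emph{every} examined vertex of $I$ to $T$ and only stops when $e(\c{H}')\le(1-\del)e(\c{H})$; nothing in the procedure bounds how many vertices of $I$ are examined before that happens, so $|T|$ could be as large as $|I|$ itself, far exceeding $\tau N/\del$. In step (iii) you assert that ``a vertex is selected into $T$ only when its current degree is at least a $\tau$-controlled multiple of the current average degree,'' but no such threshold appears anywhere in the procedure you defined, and it does not follow from processing vertices in max-degree order: once many high-degree vertices outside $I$ have been deleted, the algorithm will happily keep harvesting low-degree vertices of $I$. In the actual proof the logic runs the other way around: the algorithm is stopped after a \emph{prescribed} bounded number of fingerprint additions (roughly $\tau N$ per level), the container consists of the fingerprint together with the unexamined low-degree tail, and property $(b)$ is then \emph{derived} --- using the max-degree ordering, the codegree bounds $\Del_j$ descending to the link hypergraphs, and the hypothesis $\f{N}{e(\c{H})}\sum_{j\ge 2}\Del_j/\tau^{j-1}\le\del$ --- by showing that the edges avoiding the container are few. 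By imposing $(b)$ as a stopping rule you get $(b)$ for free but lose exactly the estimate that makes $(a)$ true, and the paragraph invoking the hypothesis to rescue it is an assertion, not an argument. So the proposal is a reasonable description of the architecture of the container proof, but the quantitative heart --- which you yourself flag as ``where essentially all of the work lies'' --- is missing, and the algorithm as literally stated would not satisfy the fingerprint bound.
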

	
	With these two results we prove the following.
	
	\begin{lem}\label{lem:containersStrong}
		For every $2\le s_2\le \cdots\le s_r$, there exist $\ep,k_0>0$ such that the following holds for every $k\ge k_0$ and $n\in \N$.  Set $\mu=\ep^{-1}k^{-1}\max \{k^{1-a_r},n^{-\gam}\}$.  Given an $r$-graph $H$ with $n$ vertices and $kn^{r-1/a_r}$ edges, there exists a function $f_H$ that maps subgraphs of $H$ to subgraphs of $H$ such that for every $K_{s_1,\ldots,s_r}$-free subgraph $I\sub H$ we have:
		\begin{itemize}
			\item[$(a)$] There exists a subgraph $T\sub I$ with $e(T)\le \mu n^{r-1/a_r}$, and
			\item[$(b)$] $I\sub f_H(T)$ with $e(f_H(T))\le (1-\ep) e(H)$.
		\end{itemize}
	\end{lem}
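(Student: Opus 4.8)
The plan is to apply the hypergraph container method of Lemma~\ref{lem:containers} to the auxiliary hypergraph $\c{H}$ whose vertex set is $E(H)$ and whose edges are the copies of $K_{s_1,\ldots,s_r}$ in $H$ furnished by the balanced supersaturation Proposition~\ref{prop:CT2}; the key (routine) point is that a subgraph $I\sub H$ is $K_{s_1,\ldots,s_r}$-free exactly when, viewed inside $V(\c{H})=E(H)$, it is an independent set of $\c{H}$. First I would invoke Proposition~\ref{prop:CT2} --- legitimate since $e(H)=kn^{r-1/a_r}$ with $k\ge k_0$ --- to produce such an $a_{r+1}$-uniform $\c{H}$ with $e(\c{H})$ bounded below and with maximum $j$-degrees $\Del_j$ obeying the $\Del_1$ estimate and the $j\ge2$ estimate of that proposition.

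Second, I would choose the spread parameter $\tau$ to be a large enough constant multiple of $\max\{k^{-a_r},k^{-1}n^{-\gam}\}$. Proposition~\ref{prop:CT2}$(b)$ says precisely that $\frac{|V(\c{H})|}{e(\c{H})}\Del_j$ is at most the $(j-1)$st power of a constant times $\max\{k^{-a_r},k^{-1}n^{-\gam}\}$, so $\frac{|V(\c{H})|}{e(\c{H})}\sum_{j\ge2}\Del_j\tau^{1-j}$ is a convergent geometric-type series, at most $\del$ once the constant in $\tau$ is large; thus the hypothesis of Lemma~\ref{lem:containers} holds with $q=a_{r+1}$. Applying that lemma yields a family $\c{C}$ of subsets of $E(H)$ and a fingerprint map $f$, and I set $f_H:=f$, extended arbitrarily to all subgraphs of $H$.

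For part $(b)$ I would check that each container $C\in\c{C}$ already omits a constant fraction of $E(H)$. Lemma~\ref{lem:containers}$(b)$ gives $e(\c{H}[C])\le(1-\del)e(\c{H})$, while every copy of $\c{H}$ not contained in $C$ uses an edge of $E(H)\sm C$, and each edge of $H$ lies in at most $\Del_1$ copies; hence $|E(H)\sm C|\cdot\Del_1\ge\del\,e(\c{H})$. Since $\c{H}$ is balanced --- $\Del_1=O(e(\c{H})/e(H))$ by Proposition~\ref{prop:CT2} --- this forces $|E(H)\sm C|\ge\ep\,e(H)$ for a suitable constant $\ep$, i.e.\ $e(f_H(T))=e(C)\le(1-\ep)e(H)$.

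It remains to establish part $(a)$, and this is where I expect the main obstacle to lie. Lemma~\ref{lem:containers}$(a)$ gives $T\sub I$ with $|T|\le\tau|V(\c{H})|/\del$; one then has to verify that, after a careful choice of all the implied constants and --- if necessary --- after iterating the container step inside the current container (each reapplication of Proposition~\ref{prop:CT2} being permitted as long as the container still has at least $k_0 n^{r-1/a_r}$ edges, with the successive fingerprints accumulated and their sizes telescoping because the container edge-counts shrink geometrically), the resulting fingerprint satisfies $e(T)\le\mu n^{r-1/a_r}$ with $\mu$ as in the statement, while the final container remains of size at most $(1-\ep)e(H)$. Pushing the exponents of $k$, $n$, $\del$ and $\log n$ through this last estimate is the crux of the argument.
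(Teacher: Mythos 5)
Your setup and your proof of part $(b)$ coincide with the paper's argument: build $\c{H}$ from Proposition~\ref{prop:CT2}, take $\tau$ to be a suitable constant multiple of $\max\{k^{-a_r},k^{-1}n^{-\gam}\}$ (the constant must absorb the $\del^{-\gam}$ in Proposition~\ref{prop:CT2}$(b)$; the paper takes $\tau=\del^{-2-\gam}\max\{k^{-a_r},k^{-1}n^{-\gam}\}$, which makes the degree sum $O(\del^2)\le\del$), apply Lemma~\ref{lem:containers} with $q=a_{r+1}$ and $N=|V(\c{H})|=e(H)$, and convert $e(\c{H}[C])\le(1-\del)e(\c{H})$ into $|E(H)\sm C|\ge\ep\,e(H)$ via the $\Del_1$ bound, exactly as in the paper (where the constant is tracked as $\del^{c-1}$ with $\ep=\del^c$).

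The gap is part $(a)$, which you defer as ``the crux'' and propose to close by iterating the container step inside the current container. No such iteration is needed, and none occurs in this lemma: the nested reapplication you describe is precisely the proof of Lemma~\ref{lem:randTech}, which repeatedly invokes the present single-step lemma. Part $(a)$ follows in one line from the fingerprint bound you already quoted: Lemma~\ref{lem:containers}$(a)$ gives $e(T)\le \tau N/\del$ with $N=e(H)=kn^{r-1/a_r}$, and since $\tau=O(\del^{-2-\gam})\max\{k^{-a_r},k^{-1}n^{-\gam}\}$, the factor $k$ coming from $N$ cancels against $\tau$, yielding
\[
e(T)\;\le\; O\bigl(\del^{-3-\gam}\bigr)\max\{k^{1-a_r},\,n^{-\gam}\}\,n^{r-1/a_r},
\]
which is at most $\ep^{-1}\max\{k^{1-a_r},n^{-\gam}\}\,n^{r-1/a_r}$ once $\ep$ is chosen as a sufficiently large constant power of $\del$ (the paper takes $\ep=\del^c$ with $c=\max\{3+\gam,\ \sum_{i=1}^{r-1}s_i-r+1/a_r\}$, the second term being what part $(b)$ requires). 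Note this is the form of the bound actually used later, via $\mu(j)=\ep^{-1}\max\{k(j)^{1-a_r},n^{-\gam}\}$, in the proof of Lemma~\ref{lem:randTech}; the extra factor $k^{-1}$ in the $\mu$ displayed in the statement is not something your computation should be expected to produce. With this direct substitution in place of the proposed iteration, your argument is the paper's proof.
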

	\begin{proof}
		Let $\del,\ k_0$, and $\c{H}$ be as in Proposition~\ref{prop:CT2}, and assume that $\del$ is sufficiently small so that Lemma~\ref{lem:containers} applies with $q=a_{r+1}$ (otherwise we can take a smaller $\del$ and the result of Proposition~\ref{prop:CT2} continues to hold).
		Observe that $\tau=\del^{-2-\gam} \max\{k^{-a_r},k^{-1}n^{-\gam}\}$ gives
		\[\f{|V(\c{H})|}{e(\c{H})}\sum_{j=2}^{q} \f{\Del_j}{\tau^{j-1}}=O(\del^2),\]
		and by taking $\del$ sufficiently  small we can assume this sum is at most $\del$.
		
		Let $c=\max\{3+\gam,\sum_{i=1}^{r-1}s_i-r+1/a_r\}$ and $\ep=\del^c$.  By applying Lemma~\ref{lem:containers}, we obtain a collection $\c{C}$ of subsets of $V(\c{H})$ and a function $f_H$ from subsets of $V(\c{H})$ to $\c{C}$ such that for every $K_{s_1,\ldots,s_r}$-free subgraph $I\sub H$ we have:
		\begin{itemize}
			\item[$(a)$] There exists a subgraph $T\sub I$ with \[e(T)\le (\del^{-2-\gam}\max\{k^{-a_r},k^{-1}n^{-\gam}\})(k n^{r-1/a_r})/\del \le \mu n^{r-1/a_r},\] and
			\item[$(b')$] $I\sub f_H(T)$ with $e(f_H(T))\le (1-\del)e(\c{H})$.
		\end{itemize}
		To show (b), it suffices to show that (b') implies $e(C)\le (1-\ep)e(H)$ for every $C\in \c{C}$.  Indeed, let
		\[\c{D}(C)=E(\c{H})\sm E(\c{H}[C])=\{E\in E(\c{H}) :e\in E\tr{ for some }e\in E(H)\sm C\}.\]
		By definition $|\c{D}(C)|=e(\c{H})-e(\c{H}[C])$, and this is at least $\del e(\c{H})$ by (b'). By the bound on the maximum degree of $\c{H}$ from  Proposition~\ref{prop:CT2}, we find
		\[|\c{D}(C)|\le \f{e(\c{H})}{\del^{c-1} k n^{r-1/a_r}}\cdot |E(H)\sm C|.\]
		Combining these two results implies $|E(H)\sm C|\ge \ep kn^{r-1/a_r}$ as desired.
	\end{proof}

	Lastly, we need the following inequality.
	\begin{lem}[\cite{MN}]\label{lem:bound}
		Let $M,s>0$, and $0<\del<1$.  If $b_1,\ldots,b_m\in \R$ satisfy $s=\sum b_j$ and $1\le b_j\le (1-\del)^j M$ for each $j\in [m]$, then
		\[s\log s\le \sum b_j \log b_j+O(M).\]
	\end{lem}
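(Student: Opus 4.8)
The plan is to rewrite the target inequality as an upper bound on a single weighted, entropy‑type sum and estimate it directly. Writing $\Phi:=\sum_j b_j\log(s/b_j)$ and using $s=\sum_j b_j$, we have $s\log s-\sum_j b_j\log b_j=\sum_j b_j(\log s-\log b_j)=\Phi$, so it suffices to show $\Phi=O(M)$. First I would record two consequences of the hypotheses: the upper bounds sum geometrically, so $s=\sum_{j=1}^m b_j\le M\sum_{j\ge 1}(1-\del)^j=M(1-\del)/\del\le M/\del$; and since each $b_j\ge 1$, the quantities $a_j:=(1-\del)^j M$ satisfy $a_j\ge b_j\ge 1$, so every logarithm appearing below is of a number $\ge 1$.

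The main point is to measure each $b_j$ against its own ceiling $a_j$ by splitting $\log(s/b_j)=\log(s/a_j)+\log(a_j/b_j)$, so that $\Phi=\sum_j b_j\log(s/a_j)+\sum_j b_j\log(a_j/b_j)$. The second sum is bounded crudely: applying $\log x\le x-1$ with $x=a_j/b_j$ gives $b_j\log(a_j/b_j)\le a_j-b_j\le a_j$, hence $\sum_j b_j\log(a_j/b_j)\le\sum_{j\ge 1}a_j\le M/\del$. For the first sum I would expand $\log a_j=\log M+j\log(1-\del)$ to obtain
\[
\sum_j b_j\log(s/a_j)=s\log(s/M)+|\log(1-\del)|\sum_j j\, b_j .
\]
Here $s\log(s/M)\le(M/\del)\log(1/\del)$ since $s\le M/\del$ (and the left-hand side is negative when $s<M$), while $\sum_j j\, b_j\le M\sum_{j\ge 1}j(1-\del)^j=M(1-\del)/\del^2$, so combined with $|\log(1-\del)|\le\del/(1-\del)$ this contributes at most $M/\del$. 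Adding the three pieces gives $\Phi=O(M)$, the implied constant depending only on $\del$.

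The only step requiring any thought is the choice of this splitting. Naive estimates---bounding $\log(s/b_j)\le\log s$ and using $s\le M/\del$, or applying Jensen to the convex function $x\mapsto x\log x$ together with the fact that $b_j\ge 1$ forces $m=O(\log M)$---only yield $O(M\log M)$ or $O(M\log\log M)$, because they discard the geometric decay of the upper bounds. Comparing each $b_j$ to $a_j=(1-\del)^j M$ decouples the two effects: the $\log(a_j/b_j)$ contribution sees only the convergent series $\sum_j a_j$, and the $\log(s/a_j)$ contribution collapses, once $\log a_j$ is written out, to the convergent series $\sum_j j\, b_j(1-\del)^j$; it is the convergence of this second series that makes the final bound linear in $M$. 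Everything after the split is a routine geometric-series computation, and since changing the base of the logarithm merely rescales $\Phi$, the conclusion $\Phi=O(M)$ is base-independent.
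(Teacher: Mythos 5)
Your proof is correct, and there is nothing in the paper to compare it against: the paper imports this lemma from the cited reference [MN] without reproducing a proof. Your argument stands on its own and checks out line by line. The reduction to $\Phi=\sum_j b_j\log(s/b_j)=O(M)$ is exact because $\sum_j b_j=s$; the split $\log(s/b_j)=\log(s/a_j)+\log(a_j/b_j)$ against the ceilings $a_j=(1-\del)^jM$ is the right move, since $\log x\le x-1$ turns the second piece into $\sum_j(a_j-b_j)\le\sum_j a_j\le M/\del$, and expanding $\log a_j=\log M+j\log(1-\del)$ turns the first piece into $s\log(s/M)+|\log(1-\del)|\sum_j jb_j$, both of which are $O_{\del}(M)$ via $s\le M/\del$, $\sum_j jb_j\le M(1-\del)/\del^2$, and $|\log(1-\del)|\le\del/(1-\del)$. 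The constant depends only on $\del$ (roughly $M\del^{-1}(2+\log(1/\del))$ up to the base of the logarithm), which is exactly what is needed in the application, where $\del=\ep$ is a constant depending only on $s_1,\ldots,s_r$; and as you note, changing the base of the logarithm only rescales $\Phi$. Your closing remark correctly identifies why cruder bounds (e.g.\ $\log(s/b_j)\le\log s$, or Jensen plus $m=O_\del(\log M)$) lose a $\log M$ or $\log\log M$ factor, so the comparison to the geometric ceilings is genuinely the key idea rather than a cosmetic choice.
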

	\begin{proof}[Proof of Lemma~\ref{lem:randTech}]
		We construct the functions $g,h$ and family $\c{S}$ as follows.  Given a $K_{s_1,\ldots,s_r}$-free $r$-graph $I\in \c{I}$, we repeatedly apply Lemma~\ref{lem:containersStrong} first to $H_0=K_n^{r}$, then to $H_1=f_{H_0}(T_1)\sm T_1$, where $T_1\sub I$ is the set guaranteed to exist by (a) of Lemma~\ref{lem:containersStrong}; then to $H_2=f_{H_1}(T_2)\sm T_2$ where $T_2\sub I\cap H_1=I\sm T_1$, and so on.  We continue until we arrive at an $r$-graph $H_m$ with at most $k n^{r-1/a_r}$ edges and set $g(I)=(T_1,\ldots,T_m)$ and $h(g(I))=H_m$.  Since $H_m$ depends only on the sequence $(T_1,\ldots,T_m)$, the function $h$ is well-defined.
		
		It remains to bound the number of colored graphs in $\c{S}$ with $s$ edges. To do this, it suffices to count the number of choices for the sequence of $r$-graphs $(T_1,\ldots,T_m)$ with $\sum e(T_j)=s$.  For each $j\ge 1$, define $k(j)$ and $\mu(j)$ by $e(H_{m-j})=k(j)n^{r-1/a_r}$ and $\mu(j)=\ep^{-1} \max \{k(j)^{1-a_r},n^{-\gam}\}$, and note that \[ (1-\ep)^{-j+1}k\le k(j)=O( n^{1/a_r}),\ T_{j+1}\sub H_j,\ e(T_{m-j})\le \mu(j)n^{r-1/a_r}.\]
		
		Thus fixing $k,\ep,s$ as above, we define \begin{align*}\c{K}(m)&=\{\b{k}=(k(1),\ldots,k(m)):(1-\ep)^{-j+1}k\le k(j)\le n^{1/a_r}\}\textrm{ for each }m\in \N,\\
		\c{B}(\b{k})&=\{\b{b}=(b(1),\ldots,b(m)):b(j)\le\mu(j)n^{r-1/a_r}\tr{ and }\sum b(j)=s\}\textrm{ for each }\b{k}\in \c{K}(m).\end{align*}  It follows that the number of colored graphs in $\c{S}$ with $s$ edges is at most
		\[\sum_{m=1}^\infty \sum_{\b{k}\in \c{K}(m)}\sum_{\b{b}\in \c{B}(\b{k})}\prod_{j=1}^m{k(j)n^{r-1/a_r}\choose b(j)}.\]
		
		Given $m,\b{k},\b{b}$, we partition this product over $j$ according to whether or not $\mu(j)=\ep^{-1} n^{-\gam}$.  Since $\c{K}(m)=\emptyset$ for $m=\Om( \log n)$ and $b(j)\le \mu(j)n^{r-1/a_r}\le \ep^{-1}n^{r-1/a_r-\gam}$ in this case, the product over these terms is at most
		\[(n^r)^{\sum b(j)}\le \exp(O(1)\cdot n^{r-1/a_r-\gam}(\log n)^2)\le \exp(O(1)\cdot k^{1-a_r} n^{r-1/a_r}),\]
		where this last step used the hypothesis of $k\le n^{\be_2/a_r} (\log n)^{2/(a_r-1)}$ and that $\be_2/a_r=\gam/(a_r-1)$ by the way we defined $\gam$ in \eqref{eq:gam}.  On the other hand, if $b(j)\le \ep^{-1} k(j)^{1-a_r}n^{r-1/a_r}$, then
		\[{k(j) n^{r-1/a_r}\choose b(j)}\le \l(\f{ek(j)n^{r-1/a_r}}{ b(j)}\r)^{b(j)}\le \l(\f{n^{r-1/a_r}}{\ep b(j)}\r)^{a_r b(j)/(a_r-1)}.\]
		
		Thus by Lemma~\ref{lem:bound} the product of these remaining $j$ terms is at most
		\[\l(\f{c n^{r-1/a_r}}{s}\r)^{a_r s/(a_r-1)}\exp(ck^{1-a_r} n^{r-1/a_r})\]
		for some constant $c$.  The result follows after noting $\sum_m \sum_{\b{k}} |\c{B}(\b{k})|=n^{O(\log n)}$.
	\end{proof}
	
	\section{Concluding Remarks}\label{sec:concluding}
	$\bullet$  Foucaud, Krivelevich, and Perarnau~\cite{FKP} conjectured that if $F$ and $H$ are graphs such that $H$ has minimum degree $\del$ and maximum degree $\Del$, then $H$ has a spanning $F$-free subgraph of minimum degree $\Omega(\del \ex(\Del,F)/\Del^2)$ as $\Del \rightarrow \infty$. This conjecture was proved for bipartite graphs of diameter at most three by Perarnau and Reed~\cite{PR}. A key part of the proof is to show that for some $c,c'> 0$, every $\Del$-regular graph $H$ has a spanning subgraph $G$ of minimum degree at least $c\Del$ with an \emph{injective $c'\Del$-coloring}, which is a map $\chi : V(G) \rightarrow \{1,2,\dots,c'\Del\}$ such that every pair of edges $e_1,e_2$ with $|e_1\cap e_2|=1$ has $\chi(e_1)\ne \chi(e_2)$.  It is natural to consider a similar framework for $r$-graphs, where now we require that $e_1,e_2$ with $|e_1\cap e_2|=r-1$ have distinct color sets.

	\begin{conj} \label{conj:injective}
		There exist constants $c,c'>0$ such that if $H$ is a $\Del$-regular $r$-graph with maximum $(r-1)$-degree at most $D$, then $H$ has a spanning subgraph $G$ of minimum degree at least $c\Del$ with an
		injective $c'D$-coloring.
	\end{conj}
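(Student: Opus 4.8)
\textbf{Status of Conjecture~\ref{conj:injective}.} This is open even for $r=2$ outside the range handled by Perarnau and Reed~\cite{PR}; what follows is the natural line of attack and the point at which it stalls. The plan is to fix in advance a uniformly random colouring $\chi:V(H)\to[N]$ with $N=4r^2D$ (the colours independent), together with an arbitrary linear order on $V(H)$, and to let $G\sub H$ be the spanning subgraph that keeps an edge $e$ exactly when $(\mathrm i)$ $\chi$ is injective on $e$, and $(\mathrm{ii})$ for every $v\in e$, writing $S=e\sm\{v\}$, the vertex $v$ is the least (in the fixed order) among all $x$ with $S\cup\{x\}\in E(H)$ and $\chi(x)=\chi(v)$. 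Then $\chi$ is \emph{automatically} an injective $N$-colouring of $G$: if $e_1=S\cup\{v_1\}$ and $e_2=S\cup\{v_2\}$ are distinct edges of $G$ sharing the $(r-1)$-set $S$, then $(\mathrm{ii})$ forces $\chi(v_1)\ne\chi(v_2)$ (only one of $v_1,v_2$ can be least in its colour class among the extensions of $S$ in $H$), while $(\mathrm i)$ forces $\chi(v_i)\notin\chi(S)$, so $\chi(v_i)$ is the unique element of $\chi(e_i)\sm\chi(S)$ and hence $\chi(e_1)\ne\chi(e_2)$. So the whole problem reduces to showing that $G$ has minimum degree $\Om(\Del)$.

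\textbf{The expected degree is fine.} For a fixed $e\in E(H)$ a union bound gives $\Pr[(\mathrm i)\text{ fails}]\le\binom r2/N$, and for each of the $r$ subsets $S\subset e$ of size $r-1$ the failure of $(\mathrm{ii})$ at $S$ requires some earlier extension of $S$ in $H$ (there are at most $D-1$ of them, and their colours are independent of $\chi|_e$) to receive the colour $\chi(v)$, so $\Pr[(\mathrm{ii})\text{ fails at }S]\le(D-1)/N$. With $N=4r^2D$ this yields $\Pr[e\in E(G)]\ge\half$ once $D$ is larger than a constant depending on $r$ (when $D$ is bounded the $(r-1)$-codegree is bounded, the set of pairs $e_1,e_2$ with $|e_1\cap e_2|=r-1$ in $H$ is ``locally finite'', and the conclusion is essentially immediate). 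Hence $\E[d_G(v)]\ge\half\Del$ for every $v$ by linearity of expectation: in expectation the construction already produces the desired subgraph with $O(D)$ colours.

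\textbf{The obstacle.} What is missing is a concentration statement giving $d_G(v)\ge c\Del$ \emph{simultaneously for all} $v$. Write $d_G(v)=\sum_{e\ni v}\1[e\in E(G)]$; the summands are not independent, since whether $e$ is kept depends on the colours of $e$ and of the ``co-extension'' vertices $x$ with $S\cup\{x\}\in E(H)$ and $S\subset e$. The natural device is the Lov\'asz Local Lemma applied to the bad events $B_v=\{d_G(v)<\quart\Del\}$: each $B_v$ is determined by the colours in a bounded-radius neighbourhood of $v$, so the dependency digraph has bounded degree, and one only needs a tail bound $\Pr[B_v]=o(1)$ — i.e.\ a concentration inequality for a sum of locally dependent bounded random variables. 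For $r=2$ there is enough control (and the more delicate argument of~\cite{PR} goes through), but for $r\ge 3$ a new difficulty appears: changing a single colour $\chi(u)$ can flip $\1[e\in E(G)]$ for all $e\ni v$ containing $u$ — there are $d_H(\{v,u\})$ of these — and for all $e\ni v$ whose $(r-1)$-set $e\sm\{v\}$ is extended by $u$ in $H$; neither quantity is controlled by $D$ once $r\ge 4$, because the hypothesis bounds only $(r-1)$-degrees while pair- and other low-order codegrees of $H$ may be of order $\Om(\Del)$. Thus $d_G(v)$ has a large Lipschitz constant on the product probability space, and neither Azuma's inequality (Lemma~\ref{lem:azuma}) nor the certifiable-Lipschitz form of Talagrand's inequality applies off the shelf.

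\textbf{What would close the gap.} One would need either a more careful, multi-round exposure of the colours so that at each stage $d_G(v)$ moves by a controlled amount — in the spirit of semi-random / nibble arguments — or a replacement of the single random colouring by a more structured construction of $G$ that directly curbs the co-extension structure, lifting the techniques of Perarnau and Reed~\cite{PR} from pairs to general $(r-1)$-sets. Obtaining this robust, locally-uniform lower bound on $d_G(v)$ is exactly the step we could not carry out, and it is the reason the statement is recorded here as a conjecture.
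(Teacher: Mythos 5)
The statement you were asked about is Conjecture~\ref{conj:injective}, which the paper poses as an open problem and does not prove, so there is no proof of record to compare against. Your write-up correctly refrains from claiming a proof: the construction you describe (a uniformly random colouring with $O(D)$ colours plus a least-vertex tie-breaking rule, which automatically makes the colouring injective on the retained subgraph and keeps each edge with probability at least a constant) is sound, and it recovers exactly what the paper itself observes after stating the conjecture, namely that the proof of Lemma~\ref{lem:homGen} already yields a subgraph with $\Omega(e(H))$ edges admitting an injective $O(D)$-colouring, the central difficulty being the minimum-degree requirement. Your diagnosis of the obstruction — that the hypothesis bounds only $(r-1)$-degrees, so for $r\ge 3$ lower-order codegrees may be of order $\Delta$, the edge-indicators at a fixed vertex have an uncontrolled Lipschitz constant under single-colour changes, and hence neither Azuma nor Talagrand applies directly to give $d_G(v)\ge c\Delta$ for all $v$ simultaneously — is a correct and concrete articulation of why the statement is recorded as a conjecture rather than a theorem.
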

	Note that the proof of Lemma~\ref{lem:homGen} essentially shows that one can find a subgraph of $H$ with at least $\Om(e(H))$ edges which has an injective $O(D)$-coloring, so the central difficulty is in maintaining the minimum degree.
	
	\medskip
	
	$\bullet$ The main open question for relative Tur\'{a}n numbers of $r$-graphs is to give bounds on $\ex(H,F)$ when $F$ is $r$-partite. We observe that for each positive integer $\Delta$ there exists an $r$-graph $H$ of maximum degree at most $\Delta$ such that
	\begin{equation}\label{suggested}
	\ex(H,F) = O\Bigl(\frac{\ex(\Del^{\frac{1}{r - 1}},F)}{\Del^{\frac{r}{r - 1}}}\Bigr) \cdot e(H),
	\end{equation}
	namely with $H=K_{\Del^{1/(r-1)}}^r$.  This leads to the question of determining for which $F$ the above upper bound is tight up to constants for all $r$-graphs $H$ of maximum degree $\Delta$ --
	Conjecture \ref{conj:graphHost} states that this holds for all graphs $F$. To this end, we generalize \eqref{Turanexp} by defining the {\em Tur\'{a}n exponent} of an $r$-graph $F$, when it exists,
	to be
	\[ \alpha(F) = \lim_{n \rightarrow \infty} \frac{\log {n \choose r}/\ex(n,F)}{\log {n-1\choose r-1}}.\]
	It seems likely that $\alpha(F)$ exists for every $r$-graph $F$, and the existence of $\alpha(F)$ when $F$ is a graph is a consequence of a conjecture of Erd\H{o}s and
	Simonovits~\cite{ES}. Similarly we generalize \eqref{genTuranexp} by defining the {\em relative Tur\'{a}n exponent} of $F$, when it exists, to be
	\[ \beta(F) = \lim_{\Delta \rightarrow \infty}\sup_{H} \frac{\log e(H)/\ex(H,F)}{\log \Delta},\]
	where the supremum ranges over all $H$ with maximum degree at most $\Del$. Theorem \ref{thm:K2} shows that whenever each $K_{s_1,\ldots,s_i}$ is known to have Tur\'{a}n exponent $(s_1\cdots s_{i-1})^{-1}$, the relative Tur\'{a}n exponent
	exists and is given by (\ref{genTuranexp}). It is noteworthy that unlike $r = 2$, $ \alpha(K_{s_1,\ldots,s_r})<\be(K_{s_1,\ldots,s_r})$ for $r \geq 3$.
	
	\medskip
	
	$\bullet$ Analogous to the conjecture that $\al(F)$ exists for all graphs, we conjecture $\be(F)$ exists for all $F$.
	
	\begin{conj}\label{general}
		For every $r$-graph $F$, the relative Tur\'{a}n exponent $\beta(F)$ exists.
	\end{conj}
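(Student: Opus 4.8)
If $F$ is not $r$-partite, then $\ex(H,F)\ge(c(F)-o(1))e(H)$ for every host $H$, where $c(F)>0$ is the Tur\'{a}n density, so $\beta(F)=0$; we may therefore assume $F$ is $r$-partite, and also connected, since for disconnected $F$ one can have $\beta(F)=+\infty$ (take $F$ a matching of size $m\ge2$ and let $H$ run over arbitrarily large matchings, so that $\ex(H,F)\le r(m-1)\Del(H)$ while $e(H)\to\infty$), which we regard as existing. Write $\rho(\Del)=\sup\{e(H)/\ex(H,F):H\text{ an }r\text{-graph with }\Del(H)\le\Del\}$, so the sequence whose limit defines $\beta(F)$ is exactly $\log\rho(\Del)/\log\Del$, and note that $\rho$ is nondecreasing. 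The plan rests on two ingredients. First, the a priori bounds $1\le\rho(\Del)\le\Del^{C_F}$ for a constant $C_F=C_F(F)$: the lower bound is trivial, and for the upper bound one builds an $F$-free subgraph of $H$ greedily, and since $F$ is connected of bounded order each edge of $H$ lies in at most $\Del^{O_F(1)}$ copies of $F$, whence a routine double-count of greedily-discarded edges gives $\ex(H,F)\ge\Del^{-O_F(1)}e(H)$. Second, the approximate super-multiplicativity
\[\rho(\Del_1\Del_2)\ \ge\ C_F^{-1}\,\rho(\Del_1)\,\rho(\Del_2)\qquad(\Del_1,\Del_2\ge1).\]
Granting both, $t\mapsto\log\rho(2^t)-\log C_F$ is super-additive, so Fekete's lemma gives convergence of $\log\rho(2^t)/(t\log2)$; sandwiching an arbitrary $\Del$ between consecutive powers of $2$ and using the monotonicity of $\rho$ upgrades this to convergence of $\log\rho(\Del)/\log\Del$, and the a priori bound makes the limit finite.

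The essential difficulty is the super-multiplicativity. Given $\Del_1,\Del_2$, pick hosts $H_1,H_2$ nearly attaining $\rho(\Del_1),\rho(\Del_2)$; by the Erd\H{o}s--Kleitman reduction we may take them $r$-partite with fixed $r$-partitions $A_1\cup\cdots\cup A_r$ and $B_1\cup\cdots\cup B_r$. Form the tensor product $H:=H_1\otimes H_2$ with parts $A_i\times B_i$ and edge set consisting of all $\{(a_1,b_1),\ldots,(a_r,b_r)\}$ with $\{a_1,\ldots,a_r\}\in E(H_1)$ and $\{b_1,\ldots,b_r\}\in E(H_2)$, indices chosen so that $(a_i,b_i)\in A_i\times B_i$. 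Then $e(H)=e(H_1)e(H_2)$ and the degree of $(a,b)\in A_i\times B_i$ in $H$ is $d_{H_1}(a)\,d_{H_2}(b)\le\Del_1\Del_2$, so $H$ witnesses $\rho(\Del_1\Del_2)\ge e(H_1)e(H_2)/\ex(H,F)$, and the super-multiplicativity reduces to the \emph{tensorization inequality} $\ex(H_1\otimes H_2,F)\le C_F\cdot\ex(H_1,F)\cdot\ex(H_2,F)$. I expect this to be the hard part, and genuinely so: multiplicativity of the size of a largest $F$-free subgraph under tensor products is a delicate property --- morally in the same family as the failure of multiplicativity of the independence number under strong products, i.e.\ Shannon capacity --- and a naive projection argument collapses, since a copy of $F$ in a subgraph of $H_1\otimes H_2$ need not project onto a copy of $F$ in either factor. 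The realistic hope is to exploit that $F$ is a \emph{fixed} hypergraph of bounded order: a copy of $F$ in an $F$-free subgraph $H'\subseteq H_1\otimes H_2$ is certified by a pair of homomorphisms $F\to H_1$ and $F\to H_2$ that together span $F$, and one would want a random or entropy-compression argument --- colouring or sampling the $H_2$-coordinates --- forcing a too-large $H'$ to contain a copy of $F$ confined to a single fibre, contradicting the $F$-freeness of the corresponding subgraph of $H_1$. The crucial point is that the loss in such an argument must be bounded by a \emph{constant}; a loss of $\Del^{o(1)}$, which is what colouring arguments of this type usually deliver, is already too weak to feed into Fekete's lemma.

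A second approach, which also appears to stall, mirrors the proof of Theorem~\ref{thm:K2}: inducting on $r$ and splitting an arbitrary host $H$ into its low-$(r-1)$-degree part (handled by Lemma~\ref{lem:homGen}, bringing in the Tur\'{a}n exponent of $\c{H}(F)$) and its high-$(r-1)$-degree part (handled by Theorem~\ref{thm:codeg}, reducing to the relative Tur\'{a}n exponent of the projection family $\boldsymbol{\pi}(F)$, one uniformity lower) would reduce the existence of $\beta(F)$ to the existence of $\beta(F')$ for each $F'\in\boldsymbol{\pi}(F)$ and of a suitable ordinary Tur\'{a}n exponent. But this only controls the lower bound for $\ex(H,F)$; to pin the limit down one still needs hosts realizing these bounds for an arbitrary $r$-partite $F$ --- a general-purpose analogue of the $\Con{a_2,\ldots,a_r}$ construction together with a balanced supersaturation statement in the spirit of Lemma~\ref{lem:construct} --- and producing such hosts for general $F$ is exactly the step with no known analogue. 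Either way, proving Conjecture~\ref{general} seems to demand an idea beyond the techniques of this paper.
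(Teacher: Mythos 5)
Conjecture~\ref{general} is stated in the paper as an open problem: the paper contains no proof of it, and indeed only establishes the existence of $\beta(F)$ in special cases (e.g.\ Theorem~\ref{thm:K2} for $F=K_{s_1,\ldots,s_r}$ with the $s_i$ growing fast, where a matching upper-bound construction $\Con{a_2,\ldots,a_r}$ plus the supersaturation Lemma~\ref{lem:construct} pins the exponent down). So there is no "paper proof" to compare yours against, and what you have written is, by your own admission, a program rather than a proof. The concrete gap is the one you name: the tensorization inequality $\ex(H_1\otimes H_2,F)\le C_F\,\ex(H_1,F)\,\ex(H_2,F)$ with a constant $C_F$ independent of $\Del_1,\Del_2$. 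Everything else in your outline (the reduction to connected $r$-partite $F$, the $r$-partite reduction via Erd\H{o}s--Kleitman at the cost of $r^{-r}$, the degree computation $d_H((a,b))=d_{H_1}(a)d_{H_2}(b)$, the a priori polynomial bound on $\rho$, and the Fekete/monotonicity bookkeeping) is routine and correct, but it all hangs on that one inequality, which is exactly the hard content and is not established. Worse, the Shannon-capacity analogy you invoke cuts against you rather than for you: for the independence number it is precisely the product inequality of this shape that fails, and nothing in the paper suggests a constant-loss version holds for $F$-free subgraphs; the paper's own phenomenon that unbalanced multipartite hosts beat the clique for $r\ge 3$ shows the extremal hosts are structured in ways that need not behave multiplicatively. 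A loss growing with $\Del$, which is what the projection/colouring arguments of Sections~2--3 would naturally give, is (as you note) useless for Fekete.

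Your second route faithfully mirrors the paper's actual machinery (Lemma~\ref{lem:homGen} for low codegrees, Theorem~\ref{thm:codeg} for high codegrees, induction on $r$ through $\boldsymbol{\pi}(F)$), and you correctly identify why it cannot close the conjecture: it only produces lower bounds on $\ex(H,F)$, and the missing ingredient is a general-purpose extremal host together with a supersaturation statement in the spirit of Lemma~\ref{lem:construct} for arbitrary $r$-partite $F$ --- which would in particular require knowing ordinary Tur\'an exponents that are themselves conjectural. One further small caveat: your matching example shows that, read literally, $\beta(F)$ is $+\infty$ for disconnected $F$ such as matchings, so any honest formulation of the conjecture needs either your convention or a restriction to connected $F$; this is worth flagging but is not the substantive obstacle. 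In summary: the proposal correctly locates the difficulty but does not resolve it, and the statement remains open both in the paper and after your attempt.
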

	
	For $r = 2$, if Conjecture~\ref{conj:graphHost} were true then the existence of $\beta(F)$ would follow from the existence of $\alpha(F)$, and in fact $\alpha(F) = \beta(F)$ in this case. While we have $\alpha(F) \leq \beta(F)$ for all $r$-graphs $F$, by (\ref{suggested}) we see that these quantities may differ sharply in the setting of hypergraphs.  For example, Theorem \ref{thm:K2} shows $\alpha(F)<\beta(F)$ when $F = K_{s_1,s_2,\dots,s_r}$ and $r \geq 3$.  It seems difficult in general to determine whether $\al(F)=\be(F)$ for a given $F$, and we leave this as an open problem.
	
	\begin{prob}
		Determine the $r$-partite $r$-graphs for which $\al(F)=\be(F)$.
	\end{prob}

	\medskip
	
	$\bullet$ In Theorem~\ref{thm:K2} we determined $\be(F)$ for $F=K_{s_1,\ldots,s_r}$ and certain values of $s_i$, and in this case we showed $\al(F)\ne \be(F)$.  Our proof extends to a somewhat wider family of hypergraphs as follows.
	
	Given a graph $F$, we define its $s$-extension to be the $3$-graph $F_{+s}$ on $V(F)\cup [s]$ with edge set $E(F_{+s})=\{ e\cup \{i\}:e\in E(F),\ i\in[s]\}$.  For example, $(K_{s_1,s_2})_{+s_3}=K_{s_1,s_2,s_3}^{(3)}$.   By going through a nearly identical proof as that of Theorem~\ref{thm:K2} and using the method of random polynomials due to Bukh and Conlon~\cite{BC}, it is possible to determine $\be(F_{+s})$ for $s$ sufficiently large provided $F$ is a non-empty connected bipartite graph of diameter at most 3 which has a supersaturation result analogous to the result of Erd\H{o}s and Simonovits \cite{ES} that was used in the proof of Lemma~\ref{lem:construct}.  In this setting we further have that $\al(F_{+s})\ne \be(F_{+s})$.
	\bibliographystyle{abbrv}
	\bibliography{GT}

\begin{thebibliography}{10}

\bibitem{ARS}
N.~Alon, L.~R\'{o}nyai, and T.~Szab\'{o}.
\newblock Norm-graphs: variations and applications.
\newblock {\em J. Combin. Theory Ser. B}, 76(2):280--290, 1999.

\bibitem{ProbMeth}
N.~Alon and J.~H. Spencer.
\newblock {\em The probabilistic method}.
\newblock John Wiley \& Sons, 2004.

\bibitem{BS}
J.~Balogh and W.~Samotij.
\newblock The number of {$K_{s,t}$}-free graphs.
\newblock {\em Journal of the London Mathematical Society}, 83(2):368--388,
  2011.

\bibitem{Benson}
C.~T. Benson.
\newblock Minimal regular graphs of girths eight and twelve.
\newblock {\em Canadian Journal of Mathematics}, 18:1091--1094, 1966.

\bibitem{BC}
B.~Bukh and D.~Conlon.
\newblock Rational exponents in extremal graph theory.
\newblock {\em Journal of the European Mathematical Society}, 20(7), 2018.

\bibitem{C}
D.~Conlon.
\newblock An extremal theorem in the hypercube.
\newblock {\em the electronic journal of combinatorics}, 17(R111):1, 2010.

\bibitem{CG}
D.~Conlon and W.~T. Gowers.
\newblock Combinatorial theorems in sparse random sets.
\newblock {\em Annals of Mathematics}, pages 367--454, 2016.

\bibitem{CT}
J.~Corsten and T.~Tran.
\newblock Balanced supersaturation for degenerate hypergraphs.
\newblock {\em arXiv preprint arXiv:1707.03788}, 2017.

\bibitem{E}
P.~Erd\H{o}s.
\newblock On extremal problems of graphs and generalized graphs.
\newblock {\em Israel Journal of Mathematics}, 2(3):183--190, 1964.

\bibitem{EK}
P.~Erd\H{o}s and D.~J. Kleitman.
\newblock On coloring graphs to maximize the proportion of multicolored
  $k$-edges.
\newblock {\em Journal of Combinatorial Theory}, 5(2):164--169, 1968.

\bibitem{ES}
P.~Erd\H{o}s and M.~Simonovits.
\newblock Supersaturated graphs and hypergraphs.
\newblock {\em Combinatorica}, 3(2):181--192, 1983.

\bibitem{FKP}
F.~Foucaud, M.~Krivelevich, and G.~Perarnau.
\newblock Large subgraphs without short cycles.
\newblock {\em SIAM Journal on Discrete Mathematics}, 29(1):65--78, 2015.

\bibitem{F}
Z.~F{\"u}redi.
\newblock Random ramsey graphs for the four-cycle.
\newblock {\em Discrete Mathematics}, 126(1-3):407--410, 1994.

\bibitem{KNS}
G.~Katona, T.~Nemetz, and M.~Simonovits.
\newblock On a problem of {T}ur{\'a}n in the theory of graphs.
\newblock {\em Mat. Lapok}, 15:228--238, 1964.

\bibitem{Keevashsurvey}
P.~Keevash.
\newblock Hypergraph {T}ur\'{a}n problems.
\newblock In {\em Surveys in combinatorics 2011}, volume 392 of {\em London
  Math. Soc. Lecture Note Ser.}, pages 83--139. Cambridge Univ. Press,
  Cambridge, 2011.

\bibitem{KKS}
Y.~Kohayakawa, B.~Kreuter, and A.~Steger.
\newblock An extremal problem for random graphs and the number of graphs with
  large even-girth.
\newblock {\em Combinatorica}, 18(1):101--120, 1998.

\bibitem{KLR}
Y.~Kohayakawa, T.~{\L}uczak, and V.~R{\"o}dl.
\newblock On ${K}_4$-free subgraphs of random graphs.
\newblock {\em Combinatorica}, 17(2):173--213, 1997.

\bibitem{KRS}
J.~Koll\'{a}r, L.~R\'{o}nyai, and T.~Szab\'{o}.
\newblock Norm-graphs and bipartite {T}ur\'{a}n numbers.
\newblock {\em Combinatorica}, 16(3):399--406, 1996.

\bibitem{MYZ}
J.~Ma, X.~Yuan, and M.~Zhang.
\newblock Some extremal results on complete degenerate hypergraphs.
\newblock {\em Journal of Combinatorial Theory, Series A}, 154:598--609, 2018.

\bibitem{M}
W.~Mantel.
\newblock Problem 28.
\newblock {\em Wiskundige Opgaven}, 10:60--61, 1907.

\bibitem{MS}
R.~Morris and D.~Saxton.
\newblock The number of {$C_{2\ell}$}-free graphs.
\newblock {\em Advances in Mathematics}, 298:534--580, 2016.

\bibitem{Mubayi}
D.~Mubayi.
\newblock Some exact results and new asymptotics for hypergraph {T}ur{\'a}n
  numbers.
\newblock {\em Combinatorics Probability and Computing}, 11(3):299--309, 2002.

\bibitem{MN}
M.~C. Neto and R.~Morris.
\newblock Maximum-size antichains in random set-systems.
\newblock {\em arXiv preprint arXiv:1404.5258}, 2014.

\bibitem{PR}
G.~Perarnau and B.~Reed.
\newblock Existence of spanning {$\mathcal{F}$}-free subgraphs with large
  minimum degree.
\newblock {\em Combinatorics, Probability and Computing}, 26(3):448--467, 2017.

\bibitem{PZ}
C.~Pohoata and D.~Zakharov.
\newblock Norm hypergraphs.
\newblock {\em arXiv preprint arXiv:2101.00715}, 2021.

\bibitem{Sch}
M.~Schacht.
\newblock Extremal results for random discrete structures.
\newblock {\em Annals of Mathematics}, pages 333--365, 2016.

\bibitem{T}
P.~Tur{\'a}n.
\newblock On an external problem in graph theory.
\newblock {\em Mat. Fiz. Lapok}, 48:436--452, 1941.

\bibitem{V}
J.~Verstra\"{e}te.
\newblock Extremal problems for cycles in graphs.
\newblock In {\em Recent trends in combinatorics}, volume 159 of {\em IMA Vol.
  Math. Appl.}, pages 83--116. Springer, [Cham], 2016.

\bibitem{W}
R.~Wenger.
\newblock Extremal graphs with no {C}4's, {C}6's, or {C}10's.
\newblock {\em Journal of Combinatorial Theory, Series B}, 52(1):113--116,
  1991.

\bibitem{Z}
K.~Zarankiewicz.
\newblock Problem {P} 101.
\newblock {\em Colloq. Math.}, 2(301):19--30, 1954.

\end{thebibliography}
\end{document}